\numberwithin{equation}{section}
\def\ca{{\mathcal A}}
\def\cb{{\mathcal B}}
\def\cc{{\mathcal C}}
\def\cf{{\mathcal F}}
\def\cg{{\mathcal G}}
\def\ck{{\mathcal K}}
\def\cn{{\mathcal N}}
\def\cp{{\mathcal P}}
\def\cu{{\mathcal U}}
\def\cv{{\mathcal V}}
\def\bc{{\mathbb C}}
\def\bn{{\mathbb N}}
\def\br{{\mathbb R}}
\def\bt{{\mathbb T}}
\def\bz{{\mathbb Z}}
\def\a{\alpha}
\def\g{\gamma}        \def\G{\Gamma}
\def\eps{\varepsilon}
\def\th{\vartheta}
\def\l{\lambda}       
\def\m{\mu}
\def\r{\rho}
\def\s{\sigma}        
\def\t{\tau}
\def\f{\varphi}
        \def\O{\Omega}
\def\itm#1{\item{$(#1)$}}
\newcommand{\set}[1]{\left\{#1\right\}}
\newcommand{\card}[1]{\left| \set{#1} \right|}
\newcommand{\ds}{\displaystyle}
\newcommand{\supp}{\text{supp}}
\newcommand{\conv}{\text{conv}\,}
\DeclareMathOperator{\Log}{Log}
\DeclareMathOperator{\arcosh}{arcosh}
\DeclareMathOperator{\Tr}{Tr}
\newcommand{\Det}{\text{det}}
\newcommand{\norm}[1]{\| #1 \|}
\newcommand{\Ci}{\cc}  
\newcommand{\Nt}{\cc^{\mathrm{notail}}}  
\newcommand{\Ta}{\cc^{\mathrm{tail}}}  
\renewcommand{\Pr}{\cp}  
\newcommand{\Complessi}{{\mathbb C}}
\newcommand{\Naturali}{{\mathbb N}}
\newcommand{\Interi}{{\mathbb Z}}
\newcommand{\DomFond}{{\mathcal F}} 
\newcommand{\ClosedPath}{{\mathcal C}}  
\newcommand{\NoTail}{{\mathcal C}^{\text{notail}}}  
\newcommand{\Tail}{{\mathcal C}^{\text{tail}}}  
\newcommand{\Cycle}{{\mathcal K}}  
\newcommand{\PrimeCycle}{{\mathcal P}}  
\newtheorem{Thm}{Theorem}[section]
\newtheorem{Cor}[Thm]{Corollary}
\newtheorem{Prop}[Thm]{Proposition}
\newtheorem{Lemma}[Thm]{Lemma}
\theoremstyle{definition}
\newtheorem{Dfn}[Thm]{Definition}
\newtheorem{exmp}[Thm]{Example}
\theoremstyle{remark}
\newtheorem{rem}[Thm]{Remark}
\begin{document}

\title{Zeta functions for infinite graphs \\
and functional equations}
\author{Daniele Guido, Tommaso Isola}
\address{Dipartimento di Matematica, Universit\`a di Roma ``Tor
Vergata'', I--00133 Roma, Italy.}
\email{guido@mat.uniroma2.it, isola@mat.uniroma2.it}
\thanks{The  authors were partially 
supported by GNAMPA, MIUR, the European Network ÒQuantum Spaces -
Noncommutative GeometryÓ HPRN-CT-2002-00280,  GDRE GREFI GENCO, and the ERC Advanced Grant 227458 OACFT } 


\begin{abstract}
  The definitions and main properties of the Ihara and Bartholdi zeta functions for infinite graphs are reviewed.  The general question of the validity of a functional equation  is discussed, and various possible solutions are proposed.
\end{abstract}
  
 \subjclass[2000]{05C25; 05C38; 46Lxx; 11M41.  } 

 \keywords{Ihara zeta function, Bartholdi zeta function, functional equation, determinant formula.}

\maketitle

 \setcounter{section}{-1}

\section{Introduction}

In this paper, we review the main results concerning the Ihara zeta function and the Bartholdi zeta function for infinite graphs. Moreover, we propose various possible solutions to the problem of the validity of a functional equation for those zeta functions.

 The zeta function associated to a finite graph by Ihara, Sunada, Hashimoto
 and others, combines features of Riemann's zeta function, Artin
 L-functions, and Selberg's zeta function, and may be viewed as an
 analogue of the Dedekind zeta function of a number field \cite{Bass,HaHo,Hashi,Ihara,KoSu,StTe,Sunada}.  It is
 defined by an Euler product over proper primitive cycles of the graph. 
 
 A main result for the Ihara zeta function $Z_X(z)$ associated with a graph $X$, is the so called determinant formula, which shows that the inverse of this function can be written, up to a polynomial, as $\Det(I-Az+Qz^{2})$, where $A$ is the adjacency matrix and $Q$ is the diagonal matrix corresponding to the degree minus 1. As a consequence, for a finite graph, $Z_X(z)$ is indeed the inverse of a polynomial, hence can be extended meromorphically to the whole plane.
 
 A second main result is the fact that, for $(q+1)$-regular graphs, namely graphs with  degree constantly equal to $(q+1)$, $Z_X$, or better its so called completion $\xi_X$, satisfies a functional equation, namely is invariant under the transformation $z\to \frac1{qz}$.
 
 The first of the mentioned results has been proved for infinite (periodic or fractal) graphs  in \cite{GILa01,GILa03}, by  introducing the analytic determinant for operator algebras. For first results and discussions about the functional equation we still refer to \cite{GILa01,GILa03} and to \cite{Clair}.
 
 The Bartholdi zeta function $Z_{X}(z,u)$ was introduced by Bartholdi  in \cite{Ba1} as a two-variable generalization of the Ihara zeta function.
Such function coincides with the Ihara zeta function for $u=0$, and gives the Euler product on all primitive cycles for $u=1$. Bartholdi also showed that some results for the Ihara zeta function extend to this new zeta function. We quote \cite{Choe,KLS,MiSu1,MiSu5} for further results and generalizations of the Bartholdi zeta function. The extension to the case of infinite periodic simple graphs is contained in \cite{GILa04}, where a functional equation for regular graphs and a determinant formula are proved. Sato \cite{Sa2} generalised  the determinant formula  to the non simple case, and also proved it for the case of fractal graphs  \cite{Sa1}.

 The aim of this paper  is two-fold: on the one hand we illustrate all the mentioned results both for the periodic and the fractal case, using  a unified approach in all the statements and also in some proofs, while for others we only treat the fractal case, referring the readers to \cite{GILa04} for the periodic case. On the other hand, we analyze the meaning and validity of the functional equation for infinite graphs.
 
Let us recall that the functional equation may be seen as a simple corollary of the determinant formula, which can be written in such a way that the argument of the determinant is itself invariant under the desired transformation of the complex plane.
However, for infinite graphs, the determinant is no longer a polynomial, and its zeroes are no longer isolated. As a consequence, the singularities of the Ihara zeta may constitute a barrier to the possibility of extending it analitically to an unbounded domain. In particular, for $(q+1)$-regular graphs, the singularities are contained in the curve $\Omega_q$ which disconnects the plane,
       \begin{figure}[ht]
    \centering
    \psfig{file=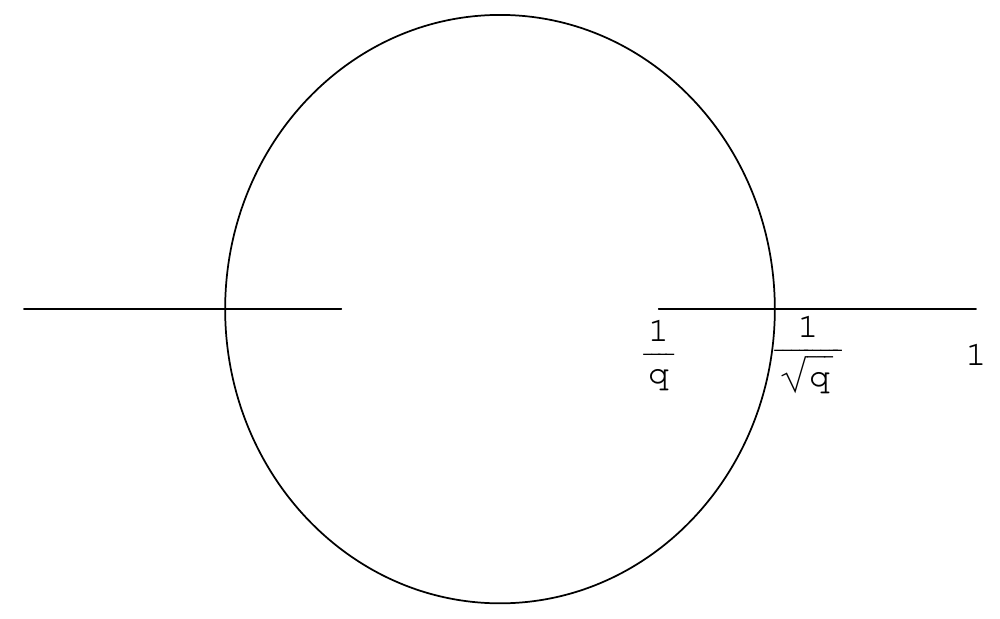,height=1.5in}
    \caption{The set $\Omega_q$}
    \label{fig:Omega}
      \end{figure}
hence may confine $Z_X$ to the bounded component of $\Omega_q^c$. In this case the functional equation  loses its meaning: one may still use the determinant formula to define $Z_X$ in the unbounded component of $\Omega_q^c$, but in this case the functional equation is not a theorem but a definition.

A first solution, due to Clair \cite{Clair}, consists in the observation that in some cases the Ihara zeta naturally extends to a holomorphic function on a branched covering of the complex plane. There,  the functional equation holds if we extend the transformation $z\to1/qz$ of $\bc$ to a transformation of the covering which interchanges the two branches.

Another solution, presented here, shows that a small amount of analyticity of the distribution function $F$ given by the trace of the spectral function of the adjacency operator is sufficient to provide a suitable analytic extension of the Ihara zeta function, which satisfies the functional equation.

As we shall see, the problem of the analytic extension does not arise for the Bartholdi zeta function on infinite graphs.
Indeed, in the case of $(q+1)$-regular graphs, namely graphs with  degree constantly equal to $(q+1)$, the determinant formula takes the following simpler form:
\begin{equation*}
 Z_{X}(z,u) = (1-(1-u)^2z^2)^{-(q-1)/2}
    \big( \Det_{\t}((1+(1-u)(q+u)z^2)I-zA)\big)^{-1} .
\end{equation*}
While the left-hand side is defined only in a suitable neighborhood of the origin in $\bc^2$, the right-hand side is a holomorphic function on an open set whose complement is always contained in a three-dimensional real submanifold  $\Omega$ of $\bc^2$ containing all possible  singularities.
We show that the complement of $\Omega$ is connected, hence the Bartholdi zeta function holomorphically extends to $\Omega^c$. Moreover, it satisfies a functional equation on such domain.

We shall use such result on the Bartholdi zeta to give a third solution to the analytic extension problem for the Ihara zeta, which now works in full generality, and satisfies the functional equation. 
The procedure is  the following: add a variable to the Ihara zeta so to get the Bartholdi zeta $Z_{X}(z,u)$, extend it holomorphically to $\Omega^c$ and then set $u=0$. Such function is the desired extension of the Ihara zeta $Z_{X}(z)$ to $\Omega_q^c$, and satisfies the functional equation.


\section{Zeta functions for infinite graphs}
\subsection{Preliminaries}

 In this section, we recall some terminology from graph theory, and 
 introduce the class of geometric operators on an infinite graph. 

 A {\it simple graph} $X=(VX,EX)$ is a collection $VX$ of objects,
 called {\it vertices}, and a collection $EX$ of unordered pairs of
 distinct vertices, called {\it edges}.  The edge $e=\set{u,v}$ is
 said to join the vertices $u,v$, while $u$ and $v$ are said to be
 {\it adjacent}, which is denoted $u\sim v$.  A {\it path} (of length
 $m$) in $X$ from $v_0\in VX$ to $v_m\in VX$, is
 $(v_{0},\ldots,v_{m})$, where $v_{i}\in VX$, $v_{i+1}\sim v_i$, for
 $i=0,...,m-1$ (note that $m$ is the number of edges in the path).  A
 path is {\it closed} if $v_{m}=v_{0}$.
 
 We assume that $X$ is countable and connected, $i.e.$ there is a path
 between any pair of distinct vertices.  Denote by $\deg(v)$ the degree
 of $v\in VX$, $i.e.$ the number of vertices adjacent to $v$.  We
 assume that $X$ has bounded degree, $i.e.$ $d := \sup_{v\in VX}
 \deg(v) <\infty$.  Denote by $\r$ the combinatorial distance on $VX$,
 that is, for $v,w\in VX$, $\r(v,w)$ is the length of the shortest
 path between $v$ and $w$.  If $\Omega \subset VX$, $r\in\bn$, we
 write $B_r(\O) := \cup_{v\in\O} B_r(v)$, where $B_{r}(v) := \set{
 v'\in VX: \r(v',v)\leq r}$.

Recall that the {\it adjacency matrix} of $X$,
 $A=\big(A(v,w)\big)_{v,w\in VX}$, and the {\it degree matrix} of $X$,
 $D=\big(D(v,w)\big)_{v,w\in VX}$ are defined by
 \begin{equation}\label{eq:adjacency}
     A(v,w)=
     \begin{cases} 
	 1&v\sim w\\
	 0&\text{otherwise} 
     \end{cases}
 \end{equation}
 and
 \begin{equation}\label{eq:degree}
     D(v,w)=
     \begin{cases} 
	 \deg(v)&v= w\\
	 0&\text{otherwise.} 
     \end{cases}
 \end{equation}
 
 Then, considered as an operator on $\ell^2(VX)$, $\|A\|\leq d:=\sup_{v\in VX}
 \deg(v) <\infty$,  (see \cite{Mohar}, \cite{MoWo}). 

\subsubsection{Periodic graphs}
In this section, we introduce the classes of periodic graphs and operators, see \cite{GILa02,GILa03} for more details.

 Let $\Gamma$ be a countable discrete
 subgroup of automorphisms of $X$, which acts freely on $X$ ($i.e.$
 any $\gamma\in\Gamma$, $\gamma\neq id$ doesn't have fixed points), and with
 finite quotient $B:=X/\Gamma$ (observe that $B$ needn't be a simple graph). Denote by $\DomFond\subset VX$ a set of
 representatives for $VX/\Gamma$, the vertices of the quotient graph $B$.
 Let us define a unitary representation of $\Gamma$ on $\ell^{2}(VX)$ by
 $(\lambda(\gamma)f)(x):= f(\gamma^{-1}x)$, for $\gamma\in\Gamma$, $f\in\ell^{2}(VX)$,
 $x\in V(X)$.  Then the von Neumann algebra ${\mathcal N}(X,\Gamma):= \{ \lambda(\gamma) :
 \gamma\in\Gamma\}'$ of bounded operators on $\ell^{2}(VX)$ commuting with the
 action of $\Gamma$ inherits a trace given by $\Tr_{\Gamma}(T) = \sum_{x\in\DomFond}
 T(x,x)$, for $T\in {\mathcal N}(X,\Gamma)$.

  It is easy to see that $A,D\in{\mathcal N}(X,\Gamma)$.

\subsubsection{Self-similar graphs}\label{sec:selfSimilar}

 In this section, we introduce the class of self-similar graphs and the geometric operators over them (see \cite{GILa01} for more details).  This
 class contains many examples of what are usually called fractal
 graphs, see $e.g.$ \cite{Barl,HaKu}.

 If $K$ is a subgraph of $X$, we call {\it frontier} of $K$, and
 denote by $\cf(K)$, the family of vertices in $V K$ having distance 1
 from the complement of $V K$ in $VX$.

 \begin{Dfn}[Local Isomorphisms]
     A {\it local isomorphism} of the graph  $X$ is a triple
     \begin{equation}
	 \bigl(S(\gamma)\, ,R(\gamma)\, ,\gamma \bigr),
     \end{equation}
     where $S(\gamma)\, ,R(\gamma)$ are subgraphs of $X$ and
     $\gamma : S(\gamma)\to R(\gamma)$ is a graph isomorphism. 
\end{Dfn}

 \begin{Dfn}[Amenable graphs]
     A countably infinite graph with bounded degree $X$ is {\it
     amenable} if it has an {\it amenable exhaustion}, namely, an
     increasing family of finite subgraphs $\{K_n : n\in \mathbb{N}\}$
     such that $\cup_{n\in\bn} K_n = X$ and
    \begin{equation*}
	\frac{|\cf(K_n)|}{|K_n|}\to 0\qquad {\rm as}\,\,\, n\to
	\infty\, ,
    \end{equation*}
    where $|K_{n}|$ stands for $|VK_{n}|$ and $|\cdot|$ denotes the 
    cardinality.
\end{Dfn}

\begin{Dfn}[Self-similar graphs] \label{def:Quasiperiodic} 
    A countably infinite graph with bounded degree $X$ is
    {\it self-similar} if it has an amenable exhaustion $\{K_{n}\}$
    such that the following  conditions  $(i)$ and $(ii)$ hold: 
    
    \itm{i} For every $n\in\bn$, there is a finite set of local
    isomorphisms $\cg(n,n+1)$ such that, for all $\gamma\in
    \cg(n,n+1)$, one has $S(\gamma) = K_n$,
    \begin{equation}
	\bigcup_{\gamma\in \cg(n,n+1)} \gamma(K_n) = K_{n+1},
    \end{equation}
    and moreover, if $\gamma , \gamma^\prime\in \cg(n,n+1)$ with
    $\gamma\neq\gamma^\prime$,
    \begin{equation}
	V( \gamma K_n ) \cap V( \gamma' K_n ) = 
	\cf(\gamma K_n ) \cap \cf( \gamma' K_n ).
    \end{equation}
    
    \itm{ii} We then define $\cg(n,m)$, for $n<m$, as the set of all
    admissible products $\g_{m-1}\cdot\dots\cdot\g_{n}$, $\g_{i}\in
    \cg(i,i+1)$, where ``admissible'' means that, for each term of the
    product, the range of $\g_{j}$ is contained in the source of
    $\g_{j+1}$.  We also let $\cg(n,n)$ consist of the identity
    isomorphism on $K_{n}$, and $\cg(n):=\cup_{m\geq n}\cg(n,m)$.  We
    can now define the $\cg$-{\it invariant frontier} of $K_{n}$:
    $$
    \cf_{\cg}(K_{n})=
    \bigcup_{\gamma\in \cg(n)}\g^{-1}\cf( \gamma K_{n} ),
    $$
    and we require that
    \begin{equation}\label{strongreg}
	\frac{|\cf_{\cg}(K_n)|}{|K_n|}\to 0\qquad {\rm
	as}\; n\to \infty\, .
    \end{equation}
\end{Dfn}

 In the rest of the paper, we denote by $\cg$ the family of all
 local isomorphisms which can be written as (admissible) products
 $\g_1^{\eps_1} \g_2^{\eps_2}...\g_k^{\eps_k}$, where
 $\g_i\in\cup_{n\in\bn} \cg(n)$, $\eps_i\in\set{-1,1}$, for
 $i=1,...,k$ and $k\in\bn$.
  
 We refer to \cite{GILa01} for several examples of self-similar graphs.
  

\subsubsection{The C$^*$-algebra of geometric operators}

 \begin{Dfn}[Finite propagation operators]
    A bounded linear operator $A$ on $\ell^2 (VX)$ has {\it finite propagation} $r=r(A)\geq 0$ if, for all $v\in VX$, we have $\supp (Av)\subset B_{r}(v)$ and $\supp (A^{*}v)\subset B_{r}(v)$, where we use $v$ to mean the function which is $1$ on the vertex $v$ and $0$ otherwise, and $A^*$ is the Hilbert space adjoint of $A$.
 \end{Dfn}
 
 \begin{Dfn}[Geometric Operators]
     A {\it local isomorphism} $\gamma$ of the graph  $X$   defines a {\it partial
     isometry} $U(\gamma) : \ell^2 (VX)\to \ell^2 (VX)$, by setting
     \begin{align*}
	 U(\g)(v):=
	 \begin{cases}
	     \g(v)& v\in V(S(\g)) \\
	     0& v\not\in V(S(\g)), 
	 \end{cases}
     \end{align*}
     and extending by linearity.  A bounded operator $T$ acting on
     $\ell^2 (VX)$ is called {\it geometric} if there exists $r\in\bn$
     such that $T$ has finite propagation $r$ and, for any local
     isomorphism $\gamma$, any $v\in VX$ such that $B_{r}(v)\subset
     S(\g)$ and $B_{r}(\g v)\subset R(\g)$, one has
     \begin{equation}\label{eq:geometric}
       TU(\gamma)v = U(\gamma)Tv,\quad T^*U(\gamma)v = U(\gamma)T^*v\, .
     \end{equation}     
 \end{Dfn}

 \begin{Prop}\label{Prop:3.7}
     Geometric operators form a $^*$-algebra containing the adjacency
     operator $A$ and the degree operator $D$.
 \end{Prop}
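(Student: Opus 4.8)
The plan is to verify the three closure properties (sums, scalar multiples, products, and adjoints) and the two membership claims ($A,D$ geometric) directly from the definition, the only real work being bookkeeping with propagation radii. First I would fix notation: for a geometric operator $T$ write $r(T)$ for a radius witnessing both the finite-propagation condition and the intertwining relations~\eqref{eq:geometric}. Note that if \eqref{eq:geometric} holds for some $r$, it holds for every $r'\geq r$, since $B_{r'}(v)\subset S(\g)$ implies $B_r(v)\subset S(\g)$ and likewise on the range side; this monotonicity lets me use a common radius when combining two operators.

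Next I would treat the linear structure. For $S,T$ geometric with radii $r(S),r(T)$, set $r=\max\{r(S),r(T)\}$. Then $S+T$ and $\l S$ ($\l\in\bc$) have finite propagation $r$, and since $U(\g)$ is linear the intertwining relations~\eqref{eq:geometric} for $S+T$ and $\l S$ at radius $r$ follow by adding/scaling the relations for $S$ and $T$; the adjoint conditions are handled symmetrically because $(S+T)^*=S^*+T^*$. For the adjoint $T^*$ of a geometric operator, observe that the definition is already symmetric in $T$ and $T^*$: finite propagation $r$ for $T$ is by definition finite propagation $r$ for $T^*$, and the pair of identities in~\eqref{eq:geometric} for $T$ is exactly the pair of identities for $T^*$ with the roles of the two equations exchanged. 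Hence $T^*$ is geometric with $r(T^*)=r(T)$.

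The product is the one step that needs a genuine estimate on radii. Given $S,T$ geometric, I claim $ST$ is geometric with radius $r(S)+r(T)$. Finite propagation is the standard fact that propagation radii add: $\supp(STv)\subset B_{r(S)}(\supp(Tv))\subset B_{r(S)+r(T)}(v)$, and symmetrically for $(ST)^*=T^*S^*$. For the intertwining relation, suppose $B_{r(S)+r(T)}(v)\subset S(\g)$ and $B_{r(S)+r(T)}(\g v)\subset R(\g)$. Then in particular $B_{r(T)}(v)\subset S(\g)$ and $B_{r(T)}(\g v)\subset R(\g)$, so $TU(\g)v=U(\g)Tv$. Now $Tv$ is supported in $B_{r(T)}(v)$, and for each vertex $w$ in that ball one checks $B_{r(S)}(w)\subset B_{r(S)+r(T)}(v)\subset S(\g)$ and $B_{r(S)}(\g w)\subset R(\g)$, so the relation for $S$ applies vertexwise and gives $SU(\g)(Tv)=U(\g)S(Tv)$; combining, $STU(\g)v=SU(\g)Tv=U(\g)STv$, and the adjoint identity is obtained the same way from $(ST)^*=T^*S^*$. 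The only subtlety — and the point I would be most careful about — is that $U(\g)$ is merely a partial isometry, so one must apply the intertwining relation one basis vector at a time (never a global identity $TU(\g)=U(\g)T$), which is exactly how the estimates above are arranged.

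Finally, the two membership statements. The operator $A$ has finite propagation $1$ since $Av=\sum_{w\sim v}w$ is supported in $B_1(v)$, and $A$ is self-adjoint; to see $AU(\g)v=U(\g)Av$ when $B_1(v)\subset S(\g)$ and $B_1(\g v)\subset R(\g)$, note both sides equal $\sum_{w\sim v}\g(w)$ because $\g$ is a graph isomorphism on $S(\g)$, hence preserves the adjacency relation among vertices of $B_1(v)$. The degree operator $D$ has propagation $0$ and is self-adjoint, and $DU(\g)v=\deg(v)\,\g(v)$ while $U(\g)Dv=\deg(v)\,\g(v)$ since a graph isomorphism preserves degrees of interior vertices; here $B_0(v)=\{v\}\subset S(\g)$ suffices. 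This completes the verification that geometric operators form a $^*$-algebra containing $A$ and $D$. $\square$
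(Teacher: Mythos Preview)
The paper states this proposition without proof (it is imported from \cite{GILa01}), so there is no in-paper argument to compare against. Your direct verification from the definition is the natural approach and is essentially correct, with one small gap.

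For the degree operator $D$ you assert that radius $r=0$ suffices for the intertwining relation, appealing to the fact that ``a graph isomorphism preserves degrees of interior vertices.'' But the hypothesis $B_0(v)=\{v\}\subset S(\g)$ does not make $v$ an interior vertex of $S(\g)$: $v$ may have neighbors in $X$ lying outside $S(\g)$, so that $\deg_X(v)\neq\deg_{S(\g)}(v)$, and then there is no reason for $\deg_X(v)$ to equal $\deg_X(\g v)$. What you actually need is $B_1(v)\subset S(\g)$ and $B_1(\g v)\subset R(\g)$; these force
\[
\deg_X(v)=\deg_{S(\g)}(v)=\deg_{R(\g)}(\g v)=\deg_X(\g v),
\]
the middle equality coming from the fact that $\g:S(\g)\to R(\g)$ is a graph isomorphism. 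Simply take $r(D)=1$ (an operator of propagation $0$ trivially has propagation $1$) and your argument goes through unchanged; the rest of the proof is fine.
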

 
\begin{Thm}\label{thm:trace}
    Let $X$ be a self-similar graph, and let $\ca (X)$ be the
    C$^*$-algebra defined as the norm closure of the $^*$-algebra of
    geometric operators.  Then, on $\ca(X)$, there is a well-defined
    trace state $\Tr_{\cg}$ given by
    \begin{equation}
	\Tr_{\cg} (T) = \lim_n \frac{\Tr\bigl( P(K_n) T \bigr)}
	{\Tr\bigl( P(K_n) \bigr)},
    \end{equation}
    where $P(K_n)$ is the orthogonal projection of $\ell^{2}(VX)$ onto
    its closed subspace $\ell^{2}(V K_n)$.
\end{Thm}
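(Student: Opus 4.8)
The plan is to show that for every geometric operator $T$ the numerical sequence
\[
b_n(T):=\frac{\Tr\bigl(P(K_n)T\bigr)}{\Tr\bigl(P(K_n)\bigr)}
\]
converges; note that $\Tr\bigl(P(K_n)\bigr)=|VK_n|$ and that $|b_n(T)|\le\|P(K_n)T\|_1/|VK_n|\le\|T\|$ since $P(K_n)$ is a projection of rank $|VK_n|$. Granting convergence, $\Tr_{\cg}:=\lim_n b_n$ is a linear functional of norm at most $1$ on the $^*$-algebra of geometric operators, which by Proposition \ref{Prop:3.7} contains $A$ and $D$ and is norm dense in $\ca(X)$; so it extends uniquely and continuously to $\ca(X)$. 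Then $\Tr_{\cg}(I)=1$ is immediate, positivity follows from $\Tr\bigl(P(K_n)T^*T\bigr)=\Tr\bigl((TP(K_n))(TP(K_n))^*\bigr)\ge0$, and the tracial identity $\Tr_{\cg}(ST)=\Tr_{\cg}(TS)$ (verified below for geometric $S,T$) all persist under the continuous extension, yielding a trace state. So the two points of substance are the convergence of $b_n(T)$ and the tracial identity.

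For convergence I would fix a geometric $T$ of propagation $r$ and $n<m$, and use that $\cg(n,m)\subseteq\cg(n)$ by Definition \ref{def:Quasiperiodic}(ii), so each $\gamma\in\cg(n,m)$ satisfies $S(\gamma)=K_n$, $R(\gamma)=\gamma K_n\subseteq K_m$, $\bigcup_{\gamma\in\cg(n,m)}\gamma K_n=K_m$, and $\gamma^{-1}\cf(\gamma K_n)\subseteq\cf_{\cg}(K_n)$. For $v\in VK_n$ with $B_r(v)\subseteq K_n$ and $B_r(\gamma v)\subseteq\gamma K_n$, relation \eqref{eq:geometric} applies, and pairing $TU(\gamma)v=U(\gamma)Tv$ against $U(\gamma)v=\gamma v$, using that $U(\gamma)^*U(\gamma)$ is the identity on functions supported in $VK_n\supseteq\supp(T\delta_v)$, gives $T(\gamma v,\gamma v)=T(v,v)$. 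The set of $v\in VK_n$ violating those two conditions has cardinality $\le C\,|\cf_{\cg}(K_n)|$ with $C=C(r,d)$ independent of $\gamma,n,m$, precisely because $\cf(K_n)\subseteq\cf_{\cg}(K_n)$ and $\gamma^{-1}\cf(\gamma K_n)\subseteq\cf_{\cg}(K_n)$ and the degree is bounded. Summing over $\gamma$ and comparing $\Tr\bigl(P(K_m)T\bigr)=\sum_{w\in VK_m}T(w,w)$ with $\sum_{\gamma}\sum_{v\in VK_n}T(\gamma v,\gamma v)=\sum_{w\in VK_m}(\text{mult of }w)\,T(w,w)$, whose discrepancy comes only from vertices of multiplicity $\ge2$, all lying in $\bigcup_{\gamma}\cf(\gamma K_n)$ and counted in total at most $|\cg(n,m)|\,|\cf_{\cg}(K_n)|$ times, I obtain
\begin{equation*}
\bigl|\Tr(P(K_m)T)-|\cg(n,m)|\,\Tr(P(K_n)T)\bigr|\le C'\|T\|\,|\cg(n,m)|\,|\cf_{\cg}(K_n)|,
\end{equation*}
and, with $T=I$, the companion bound $\bigl||VK_m|-|\cg(n,m)|\,|VK_n|\bigr|\le C'|\cg(n,m)|\,|\cf_{\cg}(K_n)|$. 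Dividing one by the other and using $|VK_m|\ge\tfrac12|\cg(n,m)|\,|VK_n|$ for $n$ large (by \eqref{strongreg}), I get $|b_m(T)-b_n(T)|\le C''\|T\|\,|\cf_{\cg}(K_n)|/|VK_n|$, which tends to $0$ by \eqref{strongreg}; hence $(b_n(T))_n$ is Cauchy and converges.

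For the tracial identity, write $P_n:=P(K_n)$. Since $P_n$ has finite rank, cyclicity of the trace on trace-class operators gives $\Tr(P_nTS)=\Tr\bigl(S(P_nT)\bigr)=\Tr(SP_nT)$ exactly, while $\Tr(P_nST)-\Tr(SP_nT)=\Tr\bigl([P_n,S]T\bigr)$. Because $S$ has propagation $r$, the operator $[P_n,S]=P_nS-SP_n$ annihilates $\delta_v$ whenever $\r(v,VX\setminus VK_n)>r$ or $\r(v,VK_n)>r$; hence it is finite rank, supported within distance $r$ of $\cf(K_n)$, of rank $\le C|\cf(K_n)|\le C|\cf_{\cg}(K_n)|$ and norm $\le2\|S\|$, so $|\Tr([P_n,S]T)|\le2C\|S\|\,\|T\|\,|\cf_{\cg}(K_n)|$. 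Dividing by $|VK_n|$ and letting $n\to\infty$ (by \eqref{strongreg}) yields $\Tr_{\cg}(ST)=\Tr_{\cg}(TS)$, which then passes to the closure. The step I expect to be the main obstacle is the convergence, and within it the \emph{uniform} control of the vertices $v$ for which $B_r(\gamma v)$ pokes out of $R(\gamma)=\gamma K_n$: this estimate must hold simultaneously for every $\gamma\in\cg(n,m)$ and every $m>n$, which is exactly why the notion of self-similar graph is arranged so that \eqref{strongreg} concerns the $\cg$-invariant frontier $\cf_{\cg}(K_n)$ rather than the ordinary frontier $\cf(K_n)$.
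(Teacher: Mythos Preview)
The paper does not actually prove this theorem: it is stated as part of the review of background material from \cite{GILa01}, with no argument given here. So there is nothing to compare your proof against in this text.

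That said, your argument is correct and is essentially the standard one. The Cauchy estimate $|b_m(T)-b_n(T)|\le C''\|T\|\,|\cf_{\cg}(K_n)|/|VK_n|$ is exactly the mechanism used later in the paper (e.g.\ in the proof of Lemma~\ref{lem:countTail}(i)) to show that various averages exist, and your handling of the two error terms (bad vertices for the geometric condition, and overcounting at the overlaps of the $\gamma K_n$) via $\cf_{\cg}(K_n)$ is precisely why that invariant frontier is introduced. The tracial identity via the commutator $[P_n,S]$ being supported near $\cf(K_n)$ is also the natural argument.

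One small point you should add: the theorem asserts that the \emph{formula} $\Tr_{\cg}(T)=\lim_n b_n(T)$ holds for every $T\in\ca(X)$, not just for geometric $T$. You establish convergence for geometric $T$ and then extend $\Tr_{\cg}$ by continuity, but you should note that the uniform bound $|b_n(T)-b_n(S)|\le\|T-S\|$ together with an $\varepsilon/3$ argument shows that $\lim_n b_n(T)$ exists for every $T\in\ca(X)$ and agrees with the continuous extension.
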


\subsection{Combinatorial results}

 The Bartholdi zeta function is defined by means of equivalence
 classes of primitive cycles.  Therefore, we need to introduce some
 terminology from graph theory, following \cite{StTe} with some
 modifications.

 \begin{Dfn}[Types of closed paths]
    \label{def:redPath}
    \itm{i} A path $C=(v_{0},\ldots,v_{m})$ in $X$ has {\it backtracking} if
    $v_{i-1}=v_{i+1}$, for some $i\in\{1,\ldots,m-1\}$.  We also say
    that $C$ has a bump at $v_{i}$.  Then, the bump count $bc(C)$ of
    $C$ is the number of bumps in $C$.  Moreover, if $C$ is a closed
    path of length $m$, the cyclic bump count is $cbc(C):= |\set{i\in\Interi_{m}:
    v_{i-1}=v_{i+1}}|$, where the indices are considered in $\Interi_{m}$,
    and $\Interi_{m}$ is the cyclic group on $m$ elements.

    \itm{ii} A  closed path is {\it primitive} if it is not obtained
    by going $k\geq 2$ times around some other closed path.

    \itm{iii} A closed path $C=(v_{0},\ldots,v_{m}=v_{0})$ has a {\it tail}
    if there is $k\in\{1,\ldots,[m/2]-1\}$ such that $v_{j}=v_{m-j}$, for
    $j=1,\ldots,k$.  Denote by $\ClosedPath$
 the set of closed paths, by $\Tail$ the set of closed paths with
    tail, and by $\NoTail$ the set of tail-less closed paths.  Observe
    that $\ClosedPath=\Tail\cup\NoTail$, $\Tail\cap\NoTail=\emptyset$.
 \end{Dfn}

 For any $m\in\Naturali$, $u\in\Complessi$, let us denote by $A_{m}(u)(x,y):=
 \sum_{P} u^{bc(P)}$, where the (finite) sum is over all paths $P$ in $X$, of
 length $m$, with initial vertex $x$ and terminal vertex $y$, for
 $x,y\in VX$.  Then $A_{1}=A$.  Let $A_{0}:= I$ and $Q:= D-I$.  Finally, let
 ${\mathcal U}\subset\Complessi$ be a bounded set containing $\set{0,1}$, and denote
 by $M({\mathcal U}):= \sup_{u\in{\mathcal U}} \max\set{|u|,|1-u|}\geq 1$, and
 $\alpha({\mathcal U}):= \frac{d+\sqrt{d^{2}+4M({\mathcal U})(d-1+M({\mathcal U}))}}{2}$.  

\begin{rem}
In the sequel, in order to unify the notation, we will denote by $(\cb(X),\t)$ the pair $(\cn(X,\Gamma),\Tr_\Gamma)$, or $(\ca(X),\Tr_\cg)$, as the case may be. Moreover, 
$$
\sideset{}{^*}\sum_{x\in X} f(x) = \begin{cases}
\sum \limits_{x\in \DomFond} f(x),  &	\textrm{if $X$ is a periodic graph} \\
\lim \limits_{n\to\infty} \frac{1}{|K_{n}|} \sum \limits_{x\in K_{n}}f(x), & \textrm{if $X$ is a self-similar graph},
\end{cases}
$$
denotes a mean on the graph. Of course, in the self-similar case, the limit must be shown to exist.
\end{rem}

 \begin{Lemma}\label{lem:Lemma1}
     \itm{i} $A_{2}(u) = A^{2}-(1-u)(Q+I)\in \cb(X)$,

     \itm{ii} for $m\geq 3$, $A_{m}(u) =
     A_{m-1}(u)A - (1-u)A_{m-2}(Q+uI) \in \cb(X)$,

     \itm{iii} $\sup_{u\in{\mathcal U}} \|A_{m}(u)\| \leq \alpha({\mathcal U})^{m}$, for $m\geq0$.
 \end{Lemma}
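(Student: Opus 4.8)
The plan is to establish the kernel identities in (i) and (ii) by a direct combinatorial count, and then to obtain both the membership in $\cb(X)$ and the estimate (iii) by induction on $m$.

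\emph{Parts (i) and (ii).} For (i), a length-$2$ path from $x$ to $y$ is $(x,z,y)$ with $x\sim z\sim y$, and it has a bump (at $z$) exactly when $x=y$; hence for $x\neq y$ every such path has weight $u^{0}=1$, so $A_{2}(u)(x,y)$ equals the number $A^{2}(x,y)$ of common neighbours of $x,y$ while $(Q+I)(x,y)=0$, and for $x=y$ each of the $\deg(x)$ paths $(x,z,x)$ has weight $u$, so $A_{2}(u)(x,x)=u\deg(x)=A^{2}(x,x)-(1-u)(Q+I)(x,x)$. For (ii), fix $m\geq3$ and $x,y$. If $P=(v_{0},\dots,v_{m})$, $v_{0}=x$, $v_{m}=y$, is obtained from $P'=(v_{0},\dots,v_{m-1})$ by appending the edge $v_{m-1}\sim y$, then $bc(P)=bc(P')$ unless $v_{m-2}=y$, in which case $bc(P)=bc(P')+1$. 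Summing $u^{bc(P)}$ over all such $P$ and separating the two cases gives $A_{m}(u)(x,y)=(A_{m-1}(u)A)(x,y)+(u-1)\Sigma(x,y)$, where $\Sigma(x,y)=\sum_{P'}u^{bc(P')}$, the sum extending over length-$(m-1)$ paths $P'$ from $x$ with $v_{m-2}=y$ and $v_{m-1}\sim y$. Stripping the last vertex from $P'$ and writing $P''=(v_{0},\dots,v_{m-2}=y)$ (of length $m-2$, from $x$ to $y$), one has $bc(P')=bc(P'')$ unless $v_{m-3}=v_{m-1}$, in which case $bc(P')=bc(P'')+1$; since $v_{m-3}\sim v_{m-2}=y$, exactly one of the $\deg(y)$ admissible choices of $v_{m-1}\sim y$ equals $v_{m-3}$, so the inner sum over $v_{m-1}$ is $u+(\deg(y)-1)$. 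Hence $\Sigma(x,y)=(A_{m-2}(u)(Q+uI))(x,y)$, which is (ii). (For $m=3$ the path $P''$ has length $1$ and $v_{m-3}=v_{0}=x\sim y$, so the count is the same, and both sides vanish when $x\not\sim y$.)

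\emph{Membership in $\cb(X)$ and part (iii).} Since $A,D\in\cb(X)$, also $A_{0}=I$, $A_{1}=A$, $Q=D-I$ and $A^{2}$ lie in $\cb(X)$, and as $\cb(X)$ (the von Neumann algebra $\cn(X,\Gamma)$ or the C$^{*}$-algebra $\ca(X)$) is closed under sums, products and scalar multiples, induction on $m$ via (i) and (ii) gives $A_{m}(u)\in\cb(X)$ for all $m$; in particular each identity exhibits $A_{m}(u)$ as the kernel of a bounded operator. For (iii), put $M=M({\mathcal U})$, $\alpha=\alpha({\mathcal U})$; then $\alpha$ is the positive root of $\alpha^{2}=d\alpha+M(M+d-1)$, so $\alpha\geq d\geq1$. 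Induct on $m$: $\|A_{0}(u)\|=1$ and $\|A_{1}(u)\|=\|A\|\leq d\leq\alpha$; by (i), $\|A_{2}(u)\|\leq\|A\|^{2}+|1-u|\,\|D\|\leq d^{2}+Md\leq d\alpha+M(M+d-1)=\alpha^{2}$, using $\alpha\geq d$ and $M\geq1$; and for $m\geq3$, using (ii) with $\|A\|\leq d$, $|1-u|\leq M$, $\|Q+uI\|=\sup_{v\in VX}|\deg(v)-1+u|\leq d-1+M$, and the inductive bounds $\|A_{m-1}(u)\|\leq\alpha^{m-1}$, $\|A_{m-2}(u)\|\leq\alpha^{m-2}$ (uniform in $u\in{\mathcal U}$),
\[
\|A_{m}(u)\|\leq d\,\alpha^{m-1}+M(d-1+M)\,\alpha^{m-2}=\alpha^{m-2}\bigl(d\alpha+M(M+d-1)\bigr)=\alpha^{m}.
\]
Taking $\sup_{u\in{\mathcal U}}$ closes the induction.

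\emph{Main obstacle.} The one delicate point is the bump bookkeeping at the ``junction'' in (ii): correctly isolating the single new potential bump that appears when a vertex is appended or removed, and noticing that $v_{m-3}$ must be adjacent to $y$ — this is precisely what turns the naive count $\deg(y)$ into the weighted count $\deg(y)-1+u$, producing the factor $Q+uI$ rather than $Q+I$ (the latter being the degenerate case $m=2$, where no junction bump exists). The remaining steps are routine induction and norm estimates.
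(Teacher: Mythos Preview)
Your proof is correct and follows essentially the same approach as the paper: direct combinatorial bump-counting for (i)--(ii), then a straightforward induction on norms for (iii). The only difference is organizational: for (ii) the paper strips the last \emph{two} steps at once and sorts into four cases according to whether $v_{m}=v_{m-2}$ and whether $v_{m-1}=v_{m-3}$, whereas you strip one step at a time in two stages (first isolating the potential bump at $v_{m-1}$, then at $v_{m-2}$), which is a slightly cleaner way to see why the correction factor is $Q+uI$ rather than $Q+I$.
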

 \begin{proof}
     $(i)$  If $x = y$, then $A_{2}(u)(x,x)=\deg(x) u = (Q+I)(x,x) u$
     because there are $\deg(x)$ closed paths of length $2$ starting
     at $x$, whereas $A^{2}(x,x) = \deg(x) = (Q+I)(x,x)$, so that
     $A_{2}(u)(x,x)=A^{2}(x,x)-(1-u)(Q+I)(x,x)$.  If $x\neq y$, then
     $A^{2}(x,y)$ is the number of paths of length $2$ from $x$ to
     $y$, so $A_{2}(u)(x,y)=A^{2}(x,y) = A^{2}(x,y)-(1-u)(Q+I)(x,y)$.

     $(ii)$ For $x,y\in VX$, consider all the paths
     $P=(v_{0},\ldots,v_{m})$ of length $m$, with $v_{0}=x$ and
     $v_{m}=y$.  They can also be considered as obtained from a path
     $P'$ of length $m-2$ going from $x\equiv v_{0}$ to $v_{m-2}$,
     followed by a path of length $2$ from $v_{m-2}$ to $y\equiv
     v_{m}$.  There are four types of such paths: $(a)$ those $P$ for
     which $y\equiv v_{m}\neq v_{m-2}$, $v_{m-1}\neq v_{m-3}$, so that $bc(P)=bc(P')$; 
     $(b)$ those $P$ for
     which $y\equiv v_{m}\neq v_{m-2}$, $v_{m-1} = v_{m-3}$, so that $bc(P)=bc(P')+1$;
     $(c)$
     those $P$ for which $y\equiv v_{m}=v_{m-2}$, but $v_{m-1}\neq
     v_{m-3}$, so that $bc(P)=bc(P')+1$; $(d)$ those $P$ for which
     $y\equiv v_{m} = v_{m-2}$ and $v_{m-1}=v_{m-3}$, so that
     $bc(P)=bc(P')+2$.

     Therefore, the terms corresponding to those four types in
     $A_{m}(u)(x,y)$ are $u^{bc(P')}$, $u^{bc(P')+1}$, $u^{bc(P')+1}$, and $u^{bc(P')+2}$, respectively.

     On the other hand, the sum $\sum_{z\in VX} A_{m-1}(u)(x,z)A(z,y)$
     assigns, to those four types, respectively the values
     $u^{bc(P')}$, $u^{bc(P')+1}$, $u^{bc(P')}$, and $u^{bc(P')+1}$. Hence we need to introduce corrections for paths of types $(c)$ and $(d)$.

     Therefore $A_{m}(u)(x,y) = \sum_{z\in VX}
     A_{m-1}(u)(x,z)A(z,y)   + A_{m-2}(u)(x,y) (\deg(y)-1) (u-1) +
     A_{m-2}(u)(x,y) (u^{2}-u)$, where the second summand takes into account paths of type $(c)$, and the third is for paths of type $(d)$. The statement follows.

     $(iii)$ We have $\|A_{1}(u)\|=\|A\|\leq d \leq \alpha({\mathcal U})$, $\|A_{2}(u)\|\leq
     d^{2}+M({\mathcal U})d \leq \alpha({\mathcal U})^{2}$, and $\|A_{m}(u)\|\leq
     d\|A_{m-1}(u)\|+M({\mathcal U})(d-1+M({\mathcal U}))\|A_{m-2}(u)\|$, from which the
     claim follows by induction.
 \end{proof}

We now want to count the closed paths of length $m$ which have a tail.

 \begin{Lemma}\label{lem:countTail}
     For $m\in\bn$, let 
	$$
     t_{m}(u):= \sideset{}{^*}\sum_{x\in X} \sum_{C=(x,\ldots)\in\Tail_{m}}
     u^{bc(C)}
	$$
     Then 
     
     \itm{i} in the self-similar case, the above mean exists and is finite,
    
     \itm{ii} $t_{1}(u)=0$, $t_{2}(u)=u \t(Q+I)$, $t_3(u)=0$,
     
     \itm{iii} for $m\geq 4$, $t_{m}(u) =
     \t \bigl( (Q-(1-2u)I)A_{m-2}(u) \bigr) + (1-u)^{2}t_{m-2}(u)$,
       

     \itm{iv} for any $m\in\bn$, 
	\begin{multline*}
      t_{m}(u) = \t\Bigl(
     (Q-(1-2u)I)\sum_{j=1}^{[\frac{m-1}{2}]} (1-u)^{2j-2}A_{m-2j}(u)
     \Bigr) \\
      + \delta_{even}(m)u(1-u)^{m-2}\t(Q+I),
     \end{multline*}
     where
     $\delta_{even}(m)=\begin{cases} 1& m \text{ is even}\\
     0& m \text{ is odd.}\end{cases}$

     \itm{v} 
     $\sup_{u\in{\mathcal U}} |t_{m}(u)| \leq 4m\alpha({\mathcal U})^{m}$.

 \end{Lemma}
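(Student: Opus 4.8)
The plan is to prove (i) and (iii) simultaneously by induction on $m$, then to obtain (iv) by unrolling the recursion down to the base cases recorded in (ii), and finally (v) from (iv) by a routine norm estimate. For the base cases: a closed path of length $m\le 3$ based at $x$ has a tail only if $v_1=v_{m-1}$, which for $m\le 3$ forces $v_1=v_2$, impossible; hence $t_1(u)=t_3(u)=0$. For $m=2$ every closed path based at $x$ is $(x,w,x)$ with $w\sim x$, has exactly one bump, and there are $D(x,x)$ of them, so the summand equals $uD(x,x)$, with mean $u\t(D)=u\t(Q+I)$; for $m\le 3$ the mean in (i) therefore exists (in the self-similar case it is the mean of the diagonal of a geometric operator, convergent by Theorem \ref{thm:trace}).

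For the inductive step, fix $m\ge4$ and assume the statement for smaller indices. Write $\widetilde t_\ell(u)(x):=\sum_{C=(x,\dots)\in\Tail_\ell}u^{bc(C)}$ for the pointwise quantity whose mean is $t_\ell(u)$. A closed path $C=(v_0,\dots,v_m=v_0)$ based at $x$ has a tail exactly when $v_1=v_{m-1}=:w$; deleting the first and last edge is a bijection onto pairs $(w,C')$ with $w\sim x$ and $C'$ a closed path of length $m-2$ based at $w$, and a direct count of bump positions gives $bc(C)=bc(C')+[v_2=x]+[v_{m-2}=x]$, where $[\,\cdot\,]\in\{0,1\}$ is the indicator. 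Expanding $u^{[v_2=x]+[v_{m-2}=x]}=1+(u-1)([v_2=x]+[v_{m-2}=x])+(u-1)^2[v_2=x][v_{m-2}=x]$ decomposes $\widetilde t_m(u)(x)$ into: the unconstrained part $\sum_{w\sim x}A_{m-2}(u)(w,w)$; the singly-constrained part $2(u-1)S_m(x)$, where $S_m(x)$ is the $u^{bc}$-weighted count of closed paths of length $m-2$ at a neighbour of $x$ whose first step lands at $x$; and the doubly-constrained part $(u-1)^2D_m(x)$, with first and last step at $x$. A further edge-deletion inside $D_m$, resp. $S_m$, yields the pointwise identities
\[
D_m(x)=(Q-(1-2u)I)(x,x)\,A_{m-4}(u)(x,x)+(1-u)^2\,\widetilde t_{m-4}(u)(x),
\]
\[
S_m(x)=(A_{m-3}(u)A)(x,x)+(u-1)\sum_{w\sim x}A_{m-4}(u)(w,w)+(u-1)^2 S_{m-2}(x)
\]
(the small cases $m=4,5$, where an inner closed path degenerates to length $\le3$, being verified by hand). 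Consequently $\widetilde t_m(u)(x)$ is a $u$-polynomial built from diagonal entries of the geometric operators $A_j(u),\,DA_j(u),\,A_j(u)A$ (geometric by Proposition \ref{Prop:3.7} and Lemma \ref{lem:Lemma1}), from neighbour-sums $\sum_{w\sim x}h(w)$ with $h$ the diagonal of a geometric operator, and from $\widetilde t_{m-4}(u)(x)$ and $S_{m-2}(x)$; by the inductive hypothesis and Theorem \ref{thm:trace} each of these has a mean, which proves (i).

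To compute the mean one uses the elementary resummation identity $\sideset{}{^*}\sum_{x\in X}\sum_{w\sim x}h(w)=\t(D M_h)$ valid whenever $h$ is the diagonal of a geometric operator $M_h$: in the periodic case it is the finite rearrangement $\sum_{x\in\DomFond}\sum_{w\sim x}h(w)=\sum_{w\in\DomFond}\deg(w)h(w)$ (using that $\Gamma$ acts freely with fundamental domain $\DomFond$), and in the self-similar case the two normalized sums over $K_n$ differ by $O(|\cf(K_n)|/|K_n|)\to0$. Thus the unconstrained part gives $\t(DA_{m-2}(u))$; the mean of $D_m$ equals $\t((Q-(1-2u)I)A_{m-4}(u))+(1-u)^2t_{m-4}(u)$, which by the inductive hypothesis --- that is, (iii) at index $m-2$ --- equals $t_{m-2}(u)$; and $\sigma_m:=\sideset{}{^*}\sum_x S_m(x)$ satisfies, via Lemma \ref{lem:Lemma1}(ii) and the traciality of $\t$, the recursion $\sigma_m=\t(A_{m-2}(u))+(1-u)^2(\sigma_{m-2}-\t(A_{m-4}(u)))$, hence $\sigma_m=\t(A_{m-2}(u))$ (base $\sigma_4=u\t(D)=\t(A_2(u))$). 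Assembling the three parts and using $D+2(u-1)I=Q-(1-2u)I$ gives
\[
t_m(u)=\t(DA_{m-2}(u))+2(u-1)\t(A_{m-2}(u))+(1-u)^2 t_{m-2}(u)=\t\big((Q-(1-2u)I)A_{m-2}(u)\big)+(1-u)^2t_{m-2}(u),
\]
which is (iii). Part (iv) then follows by iterating (iii) down to the base cases: for $m$ even one stops at $t_2(u)=u\t(Q+I)$, giving the extra term $u(1-u)^{m-2}\t(Q+I)$, for $m$ odd one stops at $0$, and in either case the surviving terms sum to $\t\big((Q-(1-2u)I)\sum_{j=1}^{[(m-1)/2]}(1-u)^{2j-2}A_{m-2j}(u)\big)$ --- the end term $A_1(u)=A$ appearing for odd $m$ being harmless, as $\t((Q-(1-2u)I)A)=0$ since $A$ has vanishing diagonal. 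Finally (v) follows from (iv) by the triangle inequality, $|\t(\,\cdot\,)|\le\|\,\cdot\,\|$, the bound $\|A_{m-2j}(u)\|\le\a(\cu)^{m-2j}$ from Lemma \ref{lem:Lemma1}(iii), and the estimates $\|Q-(1-2u)I\|\le d-1+2M(\cu)$, $\|Q+I\|\le d$, $|u|,|1-u|\le M(\cu)$; since the quadratic defining $\a(\cu)$ gives $M(\cu)<\a(\cu)$, $d<2\a(\cu)$ and $d+2M(\cu)<2\a(\cu)^2$, each of the at most $m/2$ terms of the sum is $\le2\a(\cu)^m$ and the extra term is $\le2\a(\cu)^m$, so $\sup_{u\in\cu}|t_m(u)|\le4m\,\a(\cu)^m$.

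The main obstacle is the proof of (iii): the three nested edge-deletions require keeping precise track of how the bump count changes at the junctions that are created or destroyed, and one must check that the resummation step and the three-term recursion of Lemma \ref{lem:Lemma1} make all the boundary contributions collapse exactly into the stated two-term recursion. Some additional care is needed for the degenerate small-$m$ configurations (inner closed paths of length $0$, $2$ or $3$) and, in the periodic case, for the fact that the quotient graph $B=X/\Gamma$ need not be simple.
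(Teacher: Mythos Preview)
Your proof is correct, but the route you take to (iii) is genuinely different from the paper's and noticeably longer. The paper does not keep the basepoint $x$ fixed and expand $u^{[v_2=x]+[v_{m-2}=x]}$; instead it first performs the summation swap $\sideset{}{^*}\sum_x\sum_{y\sim x}\to\sideset{}{^*}\sum_y\sum_{x\sim y}$ (your ``resummation identity''), and then, for each inner closed path $D=(y,v_1,\dots,v_{m-3},y)$ of length $m-2$, classifies the outer vertex $x$ according to whether $D$ has a tail and whether $x$ coincides with $v_1$ or $v_{m-3}$. That single four-case count yields $(\deg(y)-2+2u)A_{m-2}(u)(y,y)+(1-u)^2\widetilde t_{m-2}(u)(y)$ directly, with no auxiliary quantities. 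Your approach introduces $S_m$ and $D_m$ and proves separate recursions for them, then uses Lemma~\ref{lem:Lemma1}(ii) and an induction to collapse everything; this works, but the detour is avoidable. In fact both of your key identities $\sigma_m=\t(A_{m-2}(u))$ and $\text{mean}(D_m)=t_{m-2}(u)$ follow in one line from the same resummation trick: summing $S_m(x)$ (resp.\ $D_m(x)$) over $x$ just reindexes by the inner basepoint $w$ and drops the constraint on the first step, giving $\sum_w A_{m-2}(u)(w,w)$ (resp.\ $\sum_w\widetilde t_{m-2}(u)(w)$). Recognising this would shorten your argument to essentially the paper's. Your handling of (i) via geometric operators and of (iv), (v) matches the paper's.
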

 \begin{proof}
	We consider only the case of self-similar graphs, for the periodic case see \cite{GILa04}.
     Denote by $(C,v)$ the closed path $C$ with the origin in
     $v\in VX$.
 
     $(i)$ For $n\in\bn$, $n>m$, let 
     $$
     \O_n:= V(K_n)\setminus B_m(\cf_\cg(K_n)), \qquad \O_{n}':=
     V(K_{n}) \cap B_{m}(\cf_{\cg}(K_{n})).
     $$ 
     Then, for all $p\in\bn$,
     $$
     V(K_{n+p}) = \biggl( \bigcup_{\g\in\cg(n,n+p)} \g\O_n \biggr) \cup
     \biggl( \bigcup_{\g\in\cg(n,n+p)} \g\O_{n}' \biggr).
     $$
     Let $t_m(x,u) := \sum_{C=(x,\ldots)\in\Tail_{m}} u^{bc(C)}$ so that $|t_m(x,u)| \leq d^{m-2}M(\cu)^{m-1}$.  Then
     \begin{align*}
	& \left| \frac{1}{|K_{n+p}|} \sum_{x\in K_{n+p}} t_m(x,u) -
	\frac{1}{|K_{n}|} \sum_{x\in K_{n}} t_m(x,u) \right| \\
	& \leq \left| \frac{|\cg(n,n+p)|}{|K_{n+p}|} \sum_{x\in
	\O_{n}} t_m(x,u) - \frac{1}{|K_{n}|} \sum_{x\in K_{n}} t_m(x,u)
	\right| + \frac{1}{|K_{n+p}|} \sum_{\g \in \cg(n,n+p)} \sum_{x\in \O_{n}'} |t_m(\g x,u)| \\
	& \leq \left| \frac{|\cg(n,n+p)|}{|K_{n+p}|} -
	\frac{1}{|K_{n}|} \right| \sum_{x\in K_{n}} |t_m(x,u)| + 
	\frac{|\cg(n,n+p)|}{|K_{n+p}|} \sum_{x\in B_m(\cf_\cg(K_n))}
	|t_m(x,u)| \\
	& \quad + \frac{1}{|K_{n+p}|} \sum_{\g \in \cg(n,n+p)} \sum_{x\in \O_{n}'} |t_m(\g x,u)| \\
	& \leq \left| 1- \frac{|K_{n}| |\cg(n,n+p)|}{|K_{n+p}|}
	\right| d^{m-2}M(\cu)^{m-1} + 2 \frac{|K_{n}| |\cg(n,n+p)|}{|K_{n+p}|}
	\frac{|B_m(\cf_\cg(K_n))|}{|K_n|} d^{m-2}M(\cu)^{m-1} \\
	& \leq 6d^{m-2}(d+1)^mM(\cu)^{m-1} \eps_n \to 0,\quad \text { as }
	n\to\infty,
    \end{align*}
    where, in the last inequality, we used \cite{GILa01} equations (3.2),
    (3.8) [with $r=1$], and the fact that $\eps_n =\displaystyle
    \frac{|\cf_\cg(K_n)|}{|K_n|}\to0$.
 
    $(ii)$ is easy to prove.
    
    $(iii)$ Let us define $\O:= \set{v\in VX: v\not\in K_n,
    \r(v,K_n)=1} \subset B_1(\cf_\cg(K_n))$.  We have
    \begin{align*}
	\frac{1}{|K_{n}|} \sum_{x\in K_{n}} \sum_{y\sim x}
	& \sum_{C=(x,y,\ldots)\in\Tail_{m}} u^{bc(C)} =\\
	&= \frac{1}{|K_{n}|} \sum_{y\in K_{n}} \sum_{x\sim y}
	 \sum_{C=(x,y,\ldots)\in\Tail_{m}} u^{bc(C)} \\
	& \quad + \frac{1}{|K_{n}|} \sum_{y\in \O} \sum_{x\in K_n,
	x\sim y}  \sum_{C=(x,y,\ldots)\in\Tail_{m}} u^{bc(C)} \\
	& \quad - \frac{1}{|K_{n}|} \sum_{y\in K_{n}} \sum_{x\in \O,
	x\sim y}  \sum_{C=(x,y,\ldots)\in\Tail_{m}} u^{bc(C)} .
    \end{align*}
    Since 
    $$
    \frac{1}{|K_{n}|} \sum_{y\in\O} \sum_{x\in K_n, x\sim y}
     \sum_{C=(x,y,\ldots)\in\Tail_{m}} |u^{bc(C)}| \leq \frac{1}{|K_n|}
    |\cf_\cg(K_n)|(d+1)d^{m-2}M(\cu)^{m-1} \to0
    $$
    and
    \begin{align*}
	 \frac{1}{|K_{n}|} \sum_{y\in K_{n}} \sum_{x\in \O, x\sim y}
	&\sum_{C=(x,y,\ldots)\in\Tail_{m}} |u^{bc(C)}| = \\
	& = \frac{1}{|K_{n}|} \sum_{y\in \cf_\cg(K_{n})} \sum_{x\in
	\O, x\sim y} \sum_{C=(x,y,\ldots)\in\Tail_{m}} |u^{bc(C)}| \\
	& \leq \frac{1}{|K_n|} |\cf_    \cg(K_n)|d^{m-2}M(\cu)^{m-1} \to0,
   \end{align*}
   we obtain
   \begin{align*}
       t_{m} &= \lim_{n\to\infty} \frac{1}{|K_{n}|} \sum_{x\in K_{n}}
       \sum_{C=(x,\ldots)\in\Tail_{m}} u^{bc(C)} \\
       &= \lim_{n\to\infty} \frac{1}{|K_{n}|} \sum_{x\in K_{n}}
       \sum_{y\sim x} \sum_{C=(x,y,\ldots)\in\Tail_{m}} u^{bc(C)} \\
       &= \lim_{n\to\infty} \frac{1}{|K_{n}|} \sum_{y\in K_{n}}
       \sum_{x\sim y} \sum_{C=(x,y,\ldots)\in\Tail_{m}} u^{bc(C)} .
   \end{align*}
     
A path $C$ in the last set goes from $x$ to $y$, then over a closed path $D=(y,v_{1},\ldots,v_{m-3},y)$ of length $m-2$, and then back to $x$.  There are two kinds of closed paths $D$ at $y$: those with tails and those without.

$Case\ 1:$ $D$ does not have a tail. \\ 

Then $C$ can be of two types: $(a)$ $C_{1}$, where $x\neq v_{1}$ and $x\neq v_{m-3}$; $(b)$ $C_{2}$, where $x=v_{1}$ or $x=v_{m-3}$. Hence, $bc(C_{1})=bc(D)$, and $bc(C_{2})=bc(D)+1$, and there are $\deg(y)-2$ possibilities for $x$ to be adjacent to $y$ in $C_{1}$, and $2$ possibilities in $C_{2}$.

$Case\ 2:$ $D$ has a tail.\\
     
Then $C$ can be of two types: $(c)$ $C_{3}$, where $v_{1}=v_{m-3}\neq x$; $(d)$ $C_{4}$, where $v_{1}=v_{m-3} = x$. Hence, $bc(C_{3})=bc(D)$, and $bc(C_{4})=bc(D)+2$, and there are $\deg(y)-1$ possibilities for $x$ to be adjacent to $y$ in $C_{3}$, and $1$ possibility in $C_{4}$.

    \noindent Therefore,
     \begin{align*}
     \sum_{x\sim y}
     & \sum_{C=(x,y,\ldots)\in\Tail_{m}} u^{bc(C)}  \\
     & = (\deg(y)-2) \sum_{D=(y,\ldots)\in\NoTail_{m-2}} u^{bc(D)} +2u
     \sum_{D=(y,\ldots)\in\NoTail_{m-2}} u^{bc(D)} \\
     & \qquad +(\deg(y)-1) \sum_{D=(y,\ldots)\in\Tail_{m-2}} u^{bc(D)} +
     u^{2} \sum_{D=(y,\ldots)\in\Tail_{m-2}} u^{bc(D)} \\
     & = (\deg(y)-2+2u) \sum_{D=(y,\ldots)\in\ClosedPath_{m-2}} u^{bc(D)}
     + (1-2u+u^{2}) \sum_{D=(y,\ldots)\in\Tail_{m-2}} u^{bc(D)},
     \end{align*}
     so that
     \begin{align*}
     t_{m}(u) &=  \lim_{n\to\infty} \frac{1}{|K_{n}|} \sum_{y\in K_{n}} \Bigl( (Q(y,y)-1+2u)\cdot A_{m-2}(u)(y,y) \\
     & \qquad + (1-u)^{2} 
     \sum_{D=(y,\ldots)\in\Tail_{m-2}} u^{bc(D)} \Bigr) \\
     & = \Tr_{\cg} \bigl( (Q-(1-2u)I)A_{m-2}(u) \bigr)+(1-u)^{2}t_{m-2}(u).
     \end{align*}
     $(iv)$ Follows from $(iii)$, and the fact that $\Tr_{\cg}((Q-(1-2u)I)A)=0$.
     
     $(v)$ Let us first observe that $M({\mathcal U}) < \alpha({\mathcal U})$, so that,
     from $(iv)$ we obtain, with $\alpha:=\alpha({\mathcal U}),\ M:= M({\mathcal U})$,
     \begin{align*}
     |t_{m}(u)| & \leq \norm{ Q-(1-2u)I } \,
     \sum_{j=1}^{[\frac{m-1}{2}]} |1-u|^{2j-2} \norm{ A_{m-2j}(u) }
      + |u||1-u|^{m-2} d  \\
     & \leq (d-2+2M) \sum_{j=1}^{[\frac{m-1}{2}]} M^{2j-2} \alpha^{m-2j} +
      M^{m-1} d  \\
     & \leq (d-2+2M) \Bigl[\frac{m-1}{2} \Bigr] \alpha^{m-2} +
      M^{m-1} d  \\
     & \leq  \Bigl(\Bigl[\frac{m-1}{2} \Bigr] 3\alpha^{m-1} + \alpha^{m} \Bigr) \leq
     4m\alpha^{m}.
     \end{align*}
 \end{proof}

 \begin{Lemma}\label{lem:estim.for.N}
     Let us define
	$$
	N_{m}(u) :=
    \sideset{}{^*} \sum_{x\in X} \sum_{C=(x,\ldots)\in\ClosedPath_{m}} u^{cbc(C)}.
	$$
	Then, for all $m\in\bn$,

	\itm{i}  in the self-similar case, the above mean exists and is finite,
     
     \itm{ii} $N_{m}(u) = \t(A_{m}(u)) -(1-u) t_{m}$,
     
     \itm{iii} $|N_{m}(u)| \leq Km\alpha({\mathcal U})^{m+1}$, where $K>0$ is independent of $m$.
 \end{Lemma}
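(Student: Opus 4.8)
The plan is to prove the three assertions of Lemma~\ref{lem:estim.for.N} in tandem, reducing everything to the already-established properties of $A_m(u)$ (Lemma~\ref{lem:Lemma1}) and $t_m$ (Lemma~\ref{lem:countTail}). The central identity is $(ii)$: for each vertex $x$ and each closed path $C=(x,\ldots)$ of length $m$, the cyclic bump count $cbc(C)$ and the ordinary bump count $bc(C)$ agree unless $v_{m-1}=v_1$ (equivalently, $C$ has a bump ``across the origin''), in which case $cbc(C)=bc(C)+1$. So I would first split $\ClosedPath_m$ into tail-less closed paths and closed paths with a tail. For a tail-less closed path based at $x$, a bump at the origin means precisely $v_{m-1}=v_1$ with $v_1\neq v_{m-1}$ forced to be distinct from... — more carefully, I would argue that summing $u^{cbc(C)}$ over all based closed paths of length $m$ and then taking the mean gives $\sideset{}{^*}\sum_x A_m(u)(x,x)=\t(A_m(u))$ for the $bc$-count, and the correction for the across-the-origin bump is exactly counted by the paths whose first and last interior steps coincide, which one recognises as $(1-u)$ times the tail count $t_m$. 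The bookkeeping here is the analogue of the type-$(c)$,$(d)$ correction in Lemma~\ref{lem:Lemma1}$(ii)$; I expect the sign $-(1-u)$ to come out because a tail path has its origin on a ``spike'' and closing it up cyclically removes one potential bump while the weight bookkeeping contributes the factor $1-u$.

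For $(i)$, existence and finiteness of the mean in the self-similar case, I would not redo the $\eps_n$-estimate from scratch: once $(ii)$ is in hand, $N_m(u)=\t(A_m(u))-(1-u)t_m$ exhibits $N_m(u)$ as a combination of quantities already known to exist — $\t$ is a well-defined trace state on $\cb(X)$ by Theorem~\ref{thm:trace} (so $\t(A_m(u))$ makes sense because $A_m(u)\in\cb(X)$ by Lemma~\ref{lem:Lemma1}), and $t_m$ exists by Lemma~\ref{lem:countTail}$(i)$. The only subtlety is that to derive $(ii)$ in the self-similar case one works with the finite approximants $\tfrac1{|K_n|}\sum_{x\in K_n}(\cdots)$ and must show the boundary terms vanish; this is the same $B_m(\cf_\cg(K_n))$-type argument already carried out in the proof of Lemma~\ref{lem:countTail}$(i)$ and $(iii)$, with the per-vertex bound $|\sum_{C=(x,\ldots)\in\ClosedPath_m}u^{cbc(C)}|\le d^m M(\cu)^m$, so I would simply invoke that same mechanism rather than repeat it.

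For $(iii)$, the estimate $|N_m(u)|\le Km\alpha(\cu)^{m+1}$, I would plug the known bounds into $(ii)$: since $\t$ is a trace state it has norm one, so $|\t(A_m(u))|\le\|A_m(u)\|\le\alpha(\cu)^m$ by Lemma~\ref{lem:Lemma1}$(iii)$, and $|(1-u)t_m|\le M(\cu)\cdot 4m\,\alpha(\cu)^m\le 4m\,\alpha(\cu)^{m+1}$ by Lemma~\ref{lem:countTail}$(v)$ together with $M(\cu)\le\alpha(\cu)$. Adding, $|N_m(u)|\le\alpha(\cu)^m+4m\,\alpha(\cu)^{m+1}\le 5m\,\alpha(\cu)^{m+1}$, so $K=5$ works (uniformly in $u\in\cu$), which is what is claimed.

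The main obstacle is the combinatorial derivation of $(ii)$ — specifically, getting the correction term to be exactly $(1-u)t_m$ with the correct coefficient and sign. One must check that a closed path of length $m$ based at $x$ has $cbc=bc+1$ iff $v_1=v_{m-1}$, that this condition is equivalent to $C$ being obtained from a closed path $D=(v_1,\ldots,v_{m-1}=v_1)$ of length $m-2$ based at the neighbour $v_1$ by adding the edge $x v_1$ twice at the ends, and hence that the contribution of these paths, summed over $x$ and meaned, equals $\t\big(\text{something}\big)$ matching a multiple of $t_m$; tracking the $u$-powers (an extra bump contributes a factor $u$, while the same paths contribute $u^{bc}$ to $t_m$ without that extra bump) is what produces the factor $(u-1)=-(1-u)$. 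This is delicate but entirely parallel to the four-type analysis already done twice in the excerpt, so I would model the argument on those.
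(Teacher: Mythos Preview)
Your proposal is correct and follows essentially the same route as the paper. The paper's proof of $(ii)$ is exactly the observation you isolate: for a based closed path $C$, $cbc(C)=bc(C)$ if $C$ is tail-less and $cbc(C)=bc(C)+1$ if $C$ has a tail (i.e.\ $v_1=v_{m-1}$), so
\[
\sum_{(C,x)\in\ClosedPath_m}u^{cbc(C)}
=\sum_{(C,x)\in\ClosedPath_m}u^{bc(C)}+(u-1)\sum_{(C,x)\in\Tail_m}u^{bc(C)},
\]
and taking the mean yields $N_m(u)=\t(A_m(u))+(u-1)t_m$ directly. You are overcomplicating this step: there is no need for a four-type case analysis or for passing to a path $D$ of length $m-2$; that machinery belongs to the recursion for $t_m$, not here. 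For $(i)$ the paper simply says the existence proof is identical to Lemma~\ref{lem:countTail}$(i)$, and for $(iii)$ it just says ``follows from $(ii)$'' --- your explicit estimate with $K=5$ is exactly the intended unpacking (bearing in mind that in the periodic case $\t=\Tr_\Gamma$ is not normalised, so the constant absorbs a factor $|\cf|$).
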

 \begin{proof}
	We consider only the case of self-similar graphs, for the periodic case see \cite{GILa04}.

\itm{i}   the existence of $\lim_{n\to\infty} \frac{1}{|K_{n}|} \sum_{x\in K_{n}} \sum_{(C,x)\in\cc_{m}} u^{cbc(C)}$ can be proved as in Lemma
     \ref{lem:countTail} $(i)$.  
     
     \itm{ii} Therefore,
     \begin{align*}
	 N_{m}(u) & = \lim_{n\to\infty} \frac{1}{|K_{n}|} \sum_{x\in K_{n}} \sum_{(C,x)\in\cc_{m}} u^{cbc(C)} \\
	 & = \lim_{n\to\infty} \frac{1}{|K_{n}|} \sum_{x\in K_{n}}\biggl( \sum_{(C,x)\in\Nt_{m}} u^{bc(C)} + \sum_{(C,x)\in\Ta_{m}} u^{bc(C)+1} \biggr)\\
	 & = \lim_{n\to\infty} \frac{1}{|K_{n}|} \sum_{x\in K_{n}}\biggl( \sum_{(C,x)\in\cc_{m}} u^{bc(C)} + (u-1)\sum_{(C,x)\in\Ta_{m}} u^{bc(C)} \biggr)\\
	 & = \lim_{n\to\infty} \frac{1}{|K_{n}|}\,  \sum_{x\in
	 K_{n}} A_{m}(u)(x,x) +(u-1) \lim_{n\to\infty} \frac{1}{|K_{n}|} \sum_{x\in K_{n}}
     \sum_{C=(x,\ldots)\in\Tail_{m}} u^{bc(C)} \\
	 & = \Tr_{\cg}(A_{m}(u)) +(u-1) t_{m}.	 
     \end{align*}
     \itm{iii} This follows from $(ii)$. 
 \end{proof} 

\begin{rem}
Observe that in the self-similar case we can also write
     \begin{equation}\label{eq:Nm}
	 N_{m}(u) = \lim_{n\to\infty} \frac{1}{|K_{n}|}  \sum_{\substack{ C\in \cc_{m} \\
     C\subset K_{n} }} u^{cbc(C)}.
     \end{equation}
     Indeed, 
     \begin{align*}
	 0&\leq \frac{1}{|K_{n}|} \Biggl| \sum_{x\in K_{n}} \sum_{(C,x)\in\cc_{m}} u^{cbc(C)} - \sum_{\substack{ C\in \cc_{m} \\
     C\subset K_{n} }} u^{cbc(C)}  \Biggr| \\
	 & \leq \frac{1}{|K_{n}|}\, \sum_{\substack{ (C,x) \in \cc_{m}, C\not\subset K_{n} \\
     x\in K_{n} }} |u^{cbc(C)}|
	 \\
	 & \leq \frac{1}{|K_{n}|}\, \card{ (C,x)\in\Ci_{m}: x\in
	 B_m(\cf_\cg(K_n)) } M(\cu)^m \\
	 & = \frac{M(\cu)^m}{|K_{n}|}\, \sum_{x\in B_m(\cf_\cg(K_n))}
	 A_{m}(1)(x,x) = \frac{M(\cu)^m}{|K_{n}|}\, \Tr \bigl( P( B_m(\cf_\cg(K_n)))
	 A_{m}(1) \bigr) \\
	 & \leq M(\cu)^m \norm{A_{m}(1)} \, \frac{|B_m(\cf_\cg(K_n))|}{|K_{n}|} 
	 \leq M(\cu)^m \a(\cu)^m\, (d+1)^m \frac{| \cf_\cg(K_n)|}{|K_{n}|}\to
	 0,\, \text{as } n\to\infty.
     \end{align*}	
\end{rem}

\subsection{The Zeta function}\label{sec:Zeta}

In this section, we define the Bartholdi zeta function for a periodic graph and for a self-similar graph, and prove that it is a holomorphic function in a suitable open set.  In the rest of this work, ${\mathcal U}\subset\Complessi$ will denote a bounded open set containing $\set{0,1}$.

  \begin{Dfn}[Cycles]
    We say that two closed paths $C=(v_{0},\ldots,v_{m}=v_{0})$ and
    $D=(w_{0},\ldots,w_{m}=w_{0})$ are {\it equivalent}, and write
    $C\sim_{o} D$, if there is an integer $k$ such that
    $w_{j}=v_{j+k}$, for all $j$, where the addition is taken modulo
    $m$, that is, the origin of $D$ is shifted $k$ steps with respect
    to the origin of $C$.  The equivalence class of $C$ is denoted
    $[C]_o$.  An equivalence class is also called a {\it cycle}.
    Therefore, a closed path is just a cycle with a specified origin.

    Denote by $\ck$ the set of cycles, and by $\Pr\subset\ck$
    the subset of primitive cycles.
 \end{Dfn}

 \begin{Dfn}[Equivalence relation between cycles]
    Given $C$, $D\in\ck$, we say that $C$ and $D$ are $\cg$-{\it
    equivalent}, and write $C \sim_{\cg} D$, if there is a local
    isomorphism $\g\in \cg$ such that $D=\g(C)$.  We denote by
    $[\ck]_{\cg}$ the set of $\cg$-equivalence classes of
    cycles, and analogously for the subset $\Pr$. The notion of $\G$-equivalence is analogous (see \cite{GILa04} for details), and we denote by $[\cdot]_{\cg}$ also a $\G$-equivalence class.
 \end{Dfn}
 
  We recall from \cite{GILa01} and \cite{GILa04} several quantities associated to a cycle.
 
  \begin{Dfn}
     Let $C\in\ck$, and call 
     
     \itm{i} {\it effective length} of $C$, denoted $\ell(C)\in\bn$,
     the length of the primitive cycle $D$ underlying $C$, $i.e.$ such
     that $C=D^k$, for some $k\in\bn$, whereas the length of $C$ is denoted by $|C|$,

	\itm{ii} if $C$ is contained in a periodic graph, {\it stabilizer}  of $C$ in $\Gamma$ the subgroup $\Gamma_{C}= \{\gamma\in\Gamma :
 \gamma(C)=C\}$, whose order divides $\ell(C)$,    

     \itm{ii} if $C$ is contained in a self-similar graph, {\it size} of $C$, denoted $s(C)\in\bn$, the least $m\in\bn$
     such that $C\subset \g(K_{m})$, for some local isomorphism $\g\in
     \cg(m)$,
     
     \itm{iii} {\it average multiplicity} of $C$, the number in $[0,\infty)$ given by
     $$\mu(C) := \begin{cases}
	\frac1{|\G_C|}, & \textrm{if $C$ is contained in a periodic graph,} \\
	 \lim\limits_{n\to\infty} \frac{|\cg(s(C),n)|}{|K_{n}|}, & \textrm{if $C$ is contained in a self-similar graph.}
\end{cases}
     $$
	
 \end{Dfn}

That the limit actually exists is the content of the following
 
 \begin{Prop}
	Let $(X,\cg)$ be a self-similar graph.
     \itm{i} Let $C\in\ck$, then the following limit exists and is
     finite:
     $$
     \lim_{n}\frac{ |\cg(s(C),n)| }{|K_n|},
     $$

     \itm{ii} $s(C)$, $\ell(C)$, and $\m(C)$ only depend on
     $[C]_{\cg}\in [\ck]_{\cg}$; moreover, if $C=D^{k}$ for some
     $D\in\Pr$, $k\in\bn$, then $s(C)=s(D)$, $\ell(C)=\ell(D)$,
     $\m(C)=\m(D)$.
  \end{Prop}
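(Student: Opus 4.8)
The plan is to handle (i) by a monotone--boundedness argument and (ii) by reducing both the $\cg$-invariance and the power relation to statements about $s(C)$, the invariance of which rests on the structure of the family $\cg$.

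For (i): fix $s:=s(C)$ and set $c_n:=|\cg(s,n)|/|K_n|$ for $n\geq s$. Two facts are needed. First, the tiling property of Definition~\ref{def:Quasiperiodic}(i): $K_N=\bigcup_{\g\in\cg(n,N)}\g(K_n)$ with $V(\g K_n)\cap V(\g' K_n)\subseteq\cf(\g K_n)$ for $\g\neq\g'$; since $\cg(n,N)\subseteq\cg(n)$ and $\g^{-1}\cf(\g K_n)\subseteq\cf_\cg(K_n)$ by definition, counting the overcounted vertices (which all lie in frontiers) gives
\[
|\cg(n,N)|\,|K_n|\,(1-\eps_n)\ \leq\ |K_N|\ \leq\ |\cg(n,N)|\,|K_n|,\qquad \eps_n:=|\cf_\cg(K_n)|/|K_n|,
\]
with $\eps_n\to0$ by \eqref{strongreg}. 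Second, the multiplicativity $|\cg(s,N)|=|\cg(s,n)|\cdot|\cg(n,N)|$ for $s\leq n\leq N$, a consequence of the description of $\cg(s,N)$ as admissible products (as in \cite{GILa01}). Combining, $c_N=|\cg(s,n)|\cdot|\cg(n,N)|/|K_N|\in[\,c_n,\ c_n/(1-\eps_n)\,]$ for all $N\geq n\geq s$; hence $(c_n)$ is nondecreasing and, choosing $n_0\geq s$ with $\eps_{n_0}<1$, bounded above by $c_{n_0}/(1-\eps_{n_0})$. A nondecreasing bounded sequence converges, and its limit is $\geq c_s=1/|K_s|>0$, proving (i) and showing $\m(C)\in(0,\infty)$. (Alternatively one shows directly that $(c_n)$ is Cauchy, since $|c_N-c_n|\leq c_n\,\eps_n/(1-\eps_n)$.)

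For (ii), the power case is immediate: if $C=D^k$ with $D\in\Pr$, then $C$ and $D$ trace the same vertices and edges of $X$, so $C\subseteq\g(K_m)\iff D\subseteq\g(K_m)$ for every $\g$, giving $s(C)=s(D)$; $D$ is the unique primitive cycle underlying both, so $\ell(C)=\ell(D)=|D|$; and then $\m(C)=\lim_n|\cg(s(C),n)|/|K_n|=\lim_n|\cg(s(D),n)|/|K_n|=\m(D)$ by (i). For $\cg$-invariance, let $C'=\eta(C)$ with $\eta\in\cg$, so $C\subseteq S(\eta)$, $C'\subseteq R(\eta)$, and $\eta^{-1}\in\cg$ with $\eta^{-1}(C')=C$. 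Since $\eta$ restricts to a graph isomorphism onto $C'$ it preserves lengths and carries the primitive cycle underlying $C$ to one underlying $C'$, so $\ell(C')=\ell(C)$. For $s$, the key structural claim is that every $\eta\in\cg$ maps a standard copy $\g(K_m)$ ($\g\in\cg(m)$ with $\g(K_m)\subseteq S(\eta)$) onto another standard copy $\g'(K_m)$, $\g'\in\cg(m)$; this follows by induction on the length of a word $\g_1^{\eps_1}\cdots\g_k^{\eps_k}$ representing $\eta$, since each generator and its inverse, where defined, carries standard sub-copies to standard sub-copies (structure of self-similar graphs, cf. \cite{GILa01}). Granting this, with $m:=s(C)$ and $C\subseteq\g(K_m)$ we get $C'=\eta(C)\subseteq\eta(\g(K_m))=\g'(K_m)$, whence $s(C')\leq s(C)$; applying the same to $\eta^{-1}$ and $C'$ gives $s(C')=s(C)$, and therefore $\m(C')=\lim_n|\cg(s(C'),n)|/|K_n|=\m(C)$.

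The main obstacle is the two ``soft'' structural inputs drawn from \cite{GILa01}: the multiplicativity of the counting functions $|\cg(\cdot,\cdot)|$ under composition of admissible products, and the $\cg$-invariance of the class of standard copies of $K_m$. Neither is deep, but both require unwinding the definition of $\cg$ --- in particular handling the inverses $\g_i^{-1}$ and the level-by-level definition of $\cg(m,N)$, so that composing an element of $\cg(m,n)$ with one of $\cg(n,N)$, or conjugating a standard sub-copy through $\cg$, genuinely lands back among admissible products. The quantitative content of (i) --- that the overlap corrections in the tiling are controlled by the $\cg$-invariant frontier $\cf_\cg(K_n)$ rather than by the ambient $\cf(K_n)$, which \eqref{strongreg} forces to be $o(|K_n|)$ --- is precisely why $\cg$-invariance is built into the exhaustion, and it is already implicit in the proofs of Lemmas~\ref{lem:countTail} and~\ref{lem:estim.for.N}.
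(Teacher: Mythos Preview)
The paper does not actually supply a proof of this proposition: it simply refers the reader to \cite{GILa01}, Proposition~6.4. So there is no in-paper argument to compare yours against; what you have written is a self-contained sketch of the kind of proof that the cited reference presumably contains.

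Your argument is sound in outline. For (i), the monotone--boundedness proof is correct once the two structural inputs are granted: the tiling inequality $|\cg(n,N)|\,|K_n|(1-\eps_n)\leq|K_N|\leq|\cg(n,N)|\,|K_n|$ follows exactly as you say (the overcount in the covering $\bigcup_\g\g(K_n)$ is bounded by $\sum_\g|\cf(\g K_n)|\leq|\cg(n,N)|\,|\cf_\cg(K_n)|$), and multiplicativity $|\cg(s,N)|=|\cg(s,n)|\,|\cg(n,N)|$ is the factorisation of admissible words through level $n$. For (ii), your reductions are the right ones; the only nontrivial point is that an arbitrary $\eta\in\cg$ carries standard copies of $K_m$ to standard copies, which is indeed a lemma about the structure of $\cg$ rather than something immediate from the definitions here. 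You correctly flag both of these as the places where one must go back to \cite{GILa01}; the present paper does not reprove them, and neither should you be expected to without that reference in hand.

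One small caveat on multiplicativity: the definition in this paper speaks of $\cg(s,N)$ as the \emph{set} of admissible products, which could in principle mean the set of resulting maps rather than the set of words. Your argument implicitly counts words. In the self-similar setting distinct admissible words do give distinct local isomorphisms (this is part of what is established in \cite{GILa01}), so the distinction is harmless, but it is worth being explicit that this is where that fact is used.
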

 \begin{proof}
      See \cite{GILa01} Proposition 6.4.
  \end{proof}

\begin{Prop} 
	For $m\in\bn$, $\displaystyle{N_m(u) =
     \sum_{[C]_\cg\in[\ck_m]_\cg} \m(C)\ell(C)} u^{cbc(C)}$,\\
     where, as above, the subscript $m$ corresponds to cycles of length $m$.
\end{Prop}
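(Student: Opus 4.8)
\textit{Proof strategy.} The plan is to start from the self-similar description \eqref{eq:Nm} of $N_m(u)$ and to reorganize that sum in two stages: first grouping closed paths into cycles, then grouping cycles into $\cg$-equivalence classes. The periodic case is entirely parallel (and carried out in \cite{GILa04}), so I would only write out the self-similar one. The first stage uses that $u^{cbc(C)}$ and the condition $C\subset K_n$ depend only on the cycle $[C]_o$ and not on the chosen origin, together with the observation that a cycle of length $m$ and effective length $\ell(C)$ contains exactly $\ell(C)$ distinct closed paths: if $C=D^k$ with $D$ primitive, these are the cyclic shifts of $D$, and primitivity of $D$ is exactly what guarantees that its $\ell(C)=|D|$ cyclic shifts are pairwise distinct. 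Hence
$$
\sum_{\substack{C\in\cc_m\\ C\subset K_n}} u^{cbc(C)} \;=\; \sum_{\substack{[C]_o\in\ck_m\\ [C]_o\subset K_n}} \ell(C)\, u^{cbc(C)} .
$$

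For the second stage I would group the cycles by $\cg$-equivalence. Both $cbc(C)$ and $\ell(C)$ are $\cg$-invariant — $cbc$ because a local isomorphism is a graph isomorphism on its source, and $\ell$ by the preceding Proposition — so, setting $n_n([C]_\cg):=\#\{C'\in[C]_\cg:\ C'\subset K_n\}$,
$$
\frac1{|K_n|}\sum_{\substack{[C]_o\in\ck_m\\ [C]_o\subset K_n}} \ell(C)\, u^{cbc(C)} \;=\; \sum_{[C]_\cg\in[\ck_m]_\cg} \ell(C)\, u^{cbc(C)}\,\frac{n_n([C]_\cg)}{|K_n|}.
$$
Since there are only finitely many $\cg$-equivalence classes of cycles of a fixed length $m$ (a standard consequence of self-similarity and bounded degree, as every such cycle is $\cg$-equivalent to one contained in a fixed $K_{s_0}$), the sum on the right is a finite sum, and it remains to prove that $|K_n|^{-1}n_n([C]_\cg)\to\m(C)$ for each class and then to pass the limit through this finite sum.

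For that key limit I would fix a representative $C$ and, after replacing it by $\g^{-1}(C)$ for a suitable $\g\in\cg$, assume $C\subset K_s$ with $s=s(C)$. The map $\cg(s,n)\ni\g\mapsto\g(C)$ sends $\cg(s,n)$ into $\{C'\in[C]_\cg:\ C'\subset K_n\}$ (because $\g$ has source $K_s$ and $\g(K_s)\subset K_n$). By the defining properties of a self-similar exhaustion (Definition \ref{def:Quasiperiodic}), two distinct $\g,\g'\in\cg(s,n)$ yield the same cycle only when $\g(K_s)\cap\g'(K_s)\neq\emptyset$, which forces the relevant copy of $C$ to lie within $B_m(\cf_\cg(K_n))$; and every $C'\in[C]_\cg$ with $C'\subset K_n$ is of the form $\g(C)$ unless it meets $\cf_\cg(K_n)$. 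Both exceptional families have cardinality $O_m(|\cf_\cg(K_n)|)$, which is $o(|K_n|)$ by \eqref{strongreg}; this is the same bookkeeping as in Lemma \ref{lem:countTail}$(i)$ and the Remark following Lemma \ref{lem:estim.for.N}, and is in essence \cite{GILa01} Prop. 6.4. Therefore $n_n([C]_\cg)=|\cg(s,n)|+o(|K_n|)$, so $|K_n|^{-1}n_n([C]_\cg)\to \lim_n |\cg(s(C),n)|/|K_n|=\m(C)$, the limit existing by the preceding Proposition. Feeding this into the finite sum over $[\ck_m]_\cg$ gives $N_m(u)=\sum_{[C]_\cg\in[\ck_m]_\cg}\m(C)\ell(C)u^{cbc(C)}$.

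I expect the main obstacle to be precisely this last counting step: controlling, uniformly in $n$, the discrepancy between $n_n([C]_\cg)$ and $|\cg(s(C),n)|$, i.e. the overcounting produced by overlaps $\g(K_s)\cap\g'(K_s)$ near $\cf_\cg(K_n)$ and the undercounting of cycles that touch $\cf_\cg(K_n)$. Everything else is a routine reorganization of finite sums, relying on the already-established $\cg$-invariance of $\ell$, $s$ and $\m$.
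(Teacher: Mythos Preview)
Your approach is essentially the paper's: start from \eqref{eq:Nm}, regroup closed paths into cycles (picking up the factor $\ell(C)$), then into $\cg$-classes, and finally identify the limit $|K_n|^{-1}n_n([C]_\cg)\to\mu(C)$ before passing the limit through the sum. The only difference is that the paper justifies the last interchange by invoking dominated convergence rather than the finiteness of $[\ck_m]_\cg$, and it passes from $\sum_{D\sim_\cg C,\,D\subset K_n}u^{cbc(D)}$ to $|\cg(s(C),n)|\,u^{cbc(C)}$ in a single unexplained step---precisely the counting discrepancy you singled out as the main obstacle and sketched an argument for.
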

\begin{proof}
We prove only the self-similar case, for the periodic case see \cite{GILa04}. We have successively:
     \begin{align*} 
	 N_m(u) & = \lim_{n\to\infty} \frac{1}{|K_{n}|}  \sum_{\substack{
	 C\in\cc_{m}\\ C\subset K_{n} }} u^{cbc(C)} \\
	 & = \lim_{n\to\infty} \sum_{[C]_\cg\in[\ck_m]_\cg}
	 \frac{1}{|K_{n}|} \ell(C)\, \sum_{ \substack{ D\in\ck_{m}, D\sim_\cg C\\ D\subset
	 K_{n} }} u^{cbc(D)} \\
	 & = \lim_{n\to\infty} \sum_{[C]_\cg\in[\ck_m]_\cg}
	 \frac{1}{|K_{n}|} \, \ell(C)\, | \cg(s(C),n)|  u^{cbc(C)} \\
	 & = \sum_{[C]_\cg\in[\ck_m]_\cg} \m(C)\ell(C) u^{cbc(C)},
     \end{align*} 
     where, in the last equality, we used dominated convergence.  
\end{proof}

 \begin{Dfn}[Zeta function]
     $$
     Z_{X}(z,u) := \prod_{[C]_{\cg}\in [\PrimeCycle]_{\cg}}
     (1-z^{|C|}u^{cbc(C)})^{ -\mu(C) }, \qquad
     z,u\in\Complessi.
     $$
 \end{Dfn}

 \begin{Prop}\label{lem:power.series}
     \itm{i} $Z_X(z,u):=\prod_{[C] \in [\PrimeCycle]_{\cg}}
     (1-z^{|C|}u^{cbc(C)})^{ -\mu(C) }$ defines a
     holomorphic function in $\set{(z,u)\in\Complessi^{2}:
     |z|<\frac{1}{\alpha({\mathcal U})}, u\in{\mathcal U} }$,

     \itm{ii} $z\frac{\partial_{z}Z_X(z,u)}{Z_X(z,u)} = \sum_{m=1}^{\infty}
     N_{m}(u)z^{m}$, where $N_{m}(u)$ is defined in Lemma
     \ref{lem:estim.for.N},

     \itm{iii} $Z_X(z,u) = \exp\left( \sum_{m=1}^{\infty}
     \frac{N_{m}(u)}{m}z^{m} \right)$.
 \end{Prop}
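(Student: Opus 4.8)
The plan is to prove the three statements in sequence, since (ii) and (iii) both follow readily once the convergence in (i) is established. First I would handle (i): the key input is the estimate $|N_m(u)| \le K m\, \alpha(\cu)^{m+1}$ from Lemma~\ref{lem:estim.for.N}(iii), together with the combinatorial identity $N_m(u) = \sum_{[C]_\cg \in [\ck_m]_\cg} \mu(C)\ell(C) u^{cbc(C)}$ from the preceding Proposition. Taking logarithms formally, $-\log Z_X(z,u) = \sum_{[C]} \mu(C) \log(1 - z^{|C|} u^{cbc(C)})$, and expanding each logarithm in its own power series and regrouping by the length $|D|$ of the \emph{primitive} cycle $D$ underlying $C = D^k$ (using $|C| = k\,\ell(D)$, $\ell(C) = \ell(D)$, $\mu(C) = \mu(D)$, $cbc(D^k) = k\cdot cbc(D)$, which hold by the cited Proposition on invariance), one finds that all the terms of degree $m$ in $z$ assemble into $\frac{N_m(u)}{m} z^m$. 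Hence the formal identity $\log Z_X(z,u) = \sum_{m\ge 1} \frac{N_m(u)}{m} z^m$; I would then invoke the bound on $N_m$ to see the right-hand side converges absolutely and locally uniformly on $\{|z| < 1/\alpha(\cu),\ u \in \cu\}$, and run the same estimate on the double sum $\sum_{[C]} |\mu(C)| \cdot \frac{1}{k}|z|^{k|C|} \le \sum_m \frac{|z|^m}{m}\sum_{[C_m]}\mu(C_m)\ell(C_m) = \sum_m \frac{|z|^m}{m} N_m(1)$ to justify the rearrangement and show the product defining $Z_X$ converges (to a nonvanishing holomorphic function, since the log converges).

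Once (i) and the identity $\log Z_X(z,u) = \sum_{m\ge1} \frac{N_m(u)}{m} z^m$ are in hand, statement (iii) is immediate by exponentiating, and statement (ii) follows by differentiating the series for $\log Z_X$ with respect to $z$ and multiplying by $z$: term-by-term differentiation is legitimate on the open polydisc because the series converges locally uniformly there, giving $z\,\partial_z \log Z_X(z,u) = \sum_{m\ge1} N_m(u) z^m$, and $z\,\partial_z \log Z_X = z\, \partial_z Z_X / Z_X$ since $Z_X$ is nonvanishing.

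The main obstacle is the careful bookkeeping in (i): one must verify that each primitive cycle class $[D]_\cg$ is counted with the correct multiplicity when one passes from the Euler product over $[\PrimeCycle]_\cg$ to the sum over all $[\ck_m]_\cg$, i.e. that the map $[D]_\cg \mapsto [D^k]_\cg$ is injective for each fixed $k$ and that every cycle class of length $m$ arises uniquely as $[D^k]_\cg$ with $D$ primitive and $k|m$, $k\ell(D)=m$. This is where the invariance statements of the cited Proposition ($s, \ell, \mu$ depend only on the $\cg$-class, and $s(D^k)=s(D)$ etc.) do the real work, and where one must be slightly careful that $cbc$ scales correctly under taking powers. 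Everything else — the absolute convergence, the interchange of summations, the differentiation — is routine once the estimate $|N_m(u)| \le Km\,\alpha(\cu)^{m+1}$ is invoked.
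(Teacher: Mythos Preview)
Your proposal is correct and follows essentially the same route as the paper: both hinge on the rearrangement between the Euler product over $[\PrimeCycle]_{\cg}$ and the sum $\sum_m N_m(u)z^m$ via the identity $N_m(u)=\sum_{[C]_\cg\in[\ck_m]_\cg}\mu(C)\ell(C)u^{cbc(C)}$ together with the bound $|N_m(u)|\le Km\,\alpha(\cu)^{m+1}$. The only cosmetic difference is direction---the paper starts from $\sum_m N_m(u)z^m$ and identifies it with $z\,\partial_z\log Z_X$, whereas you start from $\log Z_X$ and expand to reach $\sum_m \tfrac{N_m(u)}{m}z^m$; one small caveat is that your absolute-convergence estimate via $N_m(1)$ silently drops the factor $|u|^{k\cdot cbc(C)}$, which for $|u|>1$ needs an extra $M(\cu)^m$ (or an appeal to $N_m(|u|)$ with a slightly enlarged $\cu$), but this is a routine adjustment and the paper is equally terse on this point.
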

 \begin{proof}
     Let us observe that, for any $u\in{\mathcal U}$, and $z\in\Complessi$ such that
     $|z|<\frac{1}{\alpha({\mathcal U})}$,
     \begin{align*}
     \sum_{m=1}^{\infty} N_{m}(u)  z^{m} & =
     \sum_{m=1}^{\infty} \sum_{[C]_{\cg}\in [\Cycle_m]_{\cg}} \mu(C)\ell(C)u^{cbc(C)}\, z^{m} \\
     & =  \sum_{[C]_{\cg}\in [\Cycle]_{\cg}} \mu(C)\ell(C)u^{cbc(C)}\, z^{|C|} \\
     & =   \sum_{[C]_{\cg}\in
     [\PrimeCycle]_{\cg}} \sum_{m=1}^{\infty} \mu(C) |C|\,  u^{cbc(C^{m})}z^{|C^{m}|} \\
     &= \sum_{[C]_{\cg}\in [\PrimeCycle]_{\cg}} \mu(C) \,
     \sum_{m=1}^{\infty} |C| z^{|C|m} u^{cbc(C)m}\\
     &= \sum_{[C]_{\cg}\in [\PrimeCycle]_{\cg}} \mu(C) \,
     z\frac{\partial}{\partial z}
     \sum_{m=1}^{\infty} \frac{z^{|C|m}u^{cbc(C)m}}{m} \\
     &= -\sum_{[C]_{\cg}\in [\PrimeCycle]_{\cg}} \mu(C) \,
     z\frac{\partial}{\partial z} \log(1-z^{|C|}u^{cbc(C)}) \\
     & = z\frac{\partial}{\partial z} \log Z_X(z,u),
     \end{align*}
     where, in the last equality we used uniform convergence on
     compact subsets of $\set{(z,u)\in\Complessi^{2}: u\in{\mathcal U},
     |z|<\frac{1}{\alpha({\mathcal U})}}$.  The proof of the remaining statements
     is now clear.
 \end{proof}



\subsection{The determinant formula}\label{sec:DetFormula}

 In this section, we prove the main result in the theory of Bartholdi
 zeta functions, which says that  the reciprocal of $Z$ is, up to a factor, the determinant of a
 deformed Laplacian on the graph.  We first need some technical
 results.  Let us recall that $d:=\sup_{v\in VX} \deg(v)$,
 ${\mathcal U}\subset\Complessi$ is a bounded open set containing $\set{0,1}$,
 $M({\mathcal U}):= \sup_{u\in{\mathcal U}} \max\set{|u|,|1-u|}$, and $\alpha\equiv\alpha({\mathcal U}):=
 \frac{d+\sqrt{d^{2}+4M({\mathcal U})(d-1+M({\mathcal U}))}}{2}$.

 \begin{Lemma}\label{lem:eq.for.A}
 	For any $u\in{\mathcal U},\ |z|<\frac{1}{\alpha}$, one has
     \itm{i} $\left(\sum_{m\geq 0}
     A_{m}(u)z^{m}\right) \bigl(I-Az+(1-u)(Q+uI)z^{2} \bigr) =
     (1-(1-u)^{2}z^{2})I$, 

     \itm{ii} $\left(\sum_{m\geq 0} \left( \sum_{k=0}^{[m/2]}
     (1-u)^{2k} A_{m-2k}(u) \right)
     z^{m}\right) \bigl(I-Az+(1-u)(Q+uI)z^{2} \bigr) = I$.
 \end{Lemma}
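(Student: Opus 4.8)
The plan is to prove part $(i)$ by a direct formal-series manipulation using the three-term recursions from Lemma~\ref{lem:Lemma1}, and then deduce part $(ii)$ from part $(i)$ by multiplying by the scalar series $\sum_{k\geq0}(1-u)^{2k}z^{2k}=(1-(1-u)^2z^2)^{-1}$.

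\emph{Step 1: convergence.} First I would note that for $u\in{\mathcal U}$ and $|z|<1/\alpha$ the series $\sum_{m\geq0}A_m(u)z^m$ converges in operator norm, since $\|A_m(u)\|\leq\alpha^m$ by Lemma~\ref{lem:Lemma1}$(iii)$. Hence all the rearrangements below are legitimate, and it suffices to check the identity as an identity of formal power series in $z$ with coefficients in $\cb(X)$.

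\emph{Step 2: proof of $(i)$.} Write $P(z):=I-Az+(1-u)(Q+uI)z^2$ and expand the product $\bigl(\sum_{m\geq0}A_m(u)z^m\bigr)P(z)$, collecting the coefficient of $z^m$. For $m=0$ the coefficient is $A_0(u)=I$; for $m=1$ it is $A_1(u)-A_0(u)A=A-A=0$; for $m=2$ it is $A_2(u)-A_1(u)A+(1-u)(Q+uI)A_0(u)=\bigl(A^2-(1-u)(Q+I)\bigr)-A^2+(1-u)(Q+uI)=-(1-u)^2I$, using Lemma~\ref{lem:Lemma1}$(i)$. For $m\geq3$ the coefficient is
\[
A_m(u)-A_{m-1}(u)A+(1-u)(Q+uI)A_{m-2}(u),
\]
and here one must be slightly careful about the order of the factors: Lemma~\ref{lem:Lemma1}$(ii)$ gives $A_m(u)=A_{m-1}(u)A-(1-u)A_{m-2}(u)(Q+uI)$ — I would check (or invoke, as is implicit in the statement) that $A_{m-2}(u)$ and $Q+uI$ commute, equivalently that $A_{m-2}(u)$ commutes with $D$, which is clear since both preserve the combinatorial structure; alternatively one rewrites the recursion with the factor on whichever side is needed. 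Substituting, the coefficient of $z^m$ vanishes for $m\geq3$. Summing up, $\bigl(\sum_{m\geq0}A_m(u)z^m\bigr)P(z)=I-(1-u)^2z^2I=(1-(1-u)^2z^2)I$, which is $(i)$.

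\emph{Step 3: proof of $(ii)$.} Since $|1-u|\,|z|^2<M({\mathcal U})^2/\alpha^2<1$, the scalar series $\sum_{k\geq0}(1-u)^{2k}z^{2k}$ converges to $(1-(1-u)^2z^2)^{-1}$. Multiplying the identity in $(i)$ on the left by this scalar series gives
\[
\Bigl(\sum_{k\geq0}(1-u)^{2k}z^{2k}\Bigr)\Bigl(\sum_{m\geq0}A_m(u)z^m\Bigr)P(z)=I,
\]
and collecting the coefficient of $z^m$ in the product of the two left-hand series yields exactly $\sum_{m\geq0}\bigl(\sum_{k=0}^{[m/2]}(1-u)^{2k}A_{m-2k}(u)\bigr)z^m$, which is $(ii)$.

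\emph{Main obstacle.} There is no deep obstacle: the only point requiring care is the noncommutativity in the recursion of Lemma~\ref{lem:Lemma1}$(ii)$, i.e.\ making sure the factor $Q+uI$ (equivalently $D$) sits on the correct side of $A_{m-2}(u)$ when matching it against the expansion of $P(z)$; this is handled by observing $[A_{m-2}(u),D]=0$, or simply by writing $P(z)$ with the degree term as $(1-u)(Q+uI)z^2$ versus $(1-u)z^2(Q+uI)$ as convenient.
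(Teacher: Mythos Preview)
Your approach is exactly the paper's: expand and match coefficients for $(i)$, then multiply by the scalar geometric series for $(ii)$. However, your handling of the order of factors contains a slip that leads you to a false claim.

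When you multiply $\bigl(\sum_{m\geq0}A_m(u)z^m\bigr)P(z)$ with $P(z)$ on the \emph{right}, the quadratic term contributes $(1-u)A_{m-2}(u)(Q+uI)z^m$, so the coefficient of $z^m$ for $m\geq3$ is
\[
A_m(u)-A_{m-1}(u)A+(1-u)A_{m-2}(u)(Q+uI),
\]
with $(Q+uI)$ already on the right. This matches Lemma~\ref{lem:Lemma1}$(ii)$ exactly, and no commutativity argument is needed. Your expression places $(Q+uI)$ on the left, creating a spurious mismatch. The commutation $[A_{m-2}(u),D]=0$ you then invoke is \emph{false} for non-regular graphs: already $[A,D]\neq0$, since $(AD)(v,w)=\deg(w)A(v,w)$ whereas $(DA)(v,w)=\deg(v)A(v,w)$. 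So the proposed fix fails---but it is also unnecessary once the product is expanded with the factors in the correct order, after which your argument and the paper's coincide.
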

 \begin{proof}
     $(i)$ From Lemma \ref{lem:Lemma1}, we obtain that
     \begin{align*}
     \biggl(\sum_{m\geq 0} &A_{m}(u) z^{m}\biggr) \bigl(I-Az+(1-u)(Q+uI)z^{2} \bigr)
     \\
     & = \sum_{m\geq 0} A_{m}(u)z^{m} - \sum_{m\geq 0}
     A_{m}(u)Az^{m+1} + \sum_{m\geq 0} (1-u)A_{m}(u)(Q+uI)z^{m+2}
     \\
     &= A_{0}(u)+A_{1}(u)z+A_{2}(u)z^{2}+ \sum_{m\geq 3}
     A_{m}(u)z^{m} \\
     & \qquad -A_{0}(u)Az -A_{1}(u)Az^{2} - \sum_{m\geq 3}
     A_{m-1}(u)Az^{m}\\
     & \qquad +(1-u)A_{0}(u)(Q+uI)z^{2}+ \sum_{m\geq 3}
     (1-u)A_{m-2}(Q+uI)z^{m} \\
     &= I+Az+\bigl( A^{2}-(1-u)(Q+I) \bigr)z^{2} -Az -A^{2}z^{2}
     +(1-u)(Q+uI)z^{2} \\
     & = (1-(1-u)^{2}z^{2})I.
     \end{align*}
     $(ii)$
     \begin{align*}
     I &= (1-(1-u)^{2}z^{2})^{-1} \biggl(\sum_{m\geq 0}
     A_{m}(u)z^{m}\biggr) \bigl(I-Az+(1-u)(Q+uI)z^{2} \bigr) \\
     &= \biggl(\sum_{m\geq 0} A_{m}(u)z^{m}\biggr) \biggl(
     \sum_{j=0}^{\infty}(1-u)^{2j}z^{2j}\biggr) \bigl(I-Az+(1-u)(Q+uI)z^{2} \bigr) \\
     &= \biggl(\sum_{k\geq 0}\sum_{j=0}^{\infty}
     A_{k}(u)(1-u)^{2j}z^{k+2j}\biggr) \bigl(I-Az+(1-u)(Q+uI)z^{2} \bigr) \\
     &= \biggl(\sum_{m\geq 0}\biggl( \sum_{j=0}^{[m/2]}
     A_{m-2j}(u)(1-u)^{2j}\biggr) z^{m}\biggr) \bigl(I-Az+(1-u)(Q+uI)z^{2} \bigr).
     \end{align*}
 \end{proof}

 \begin{Lemma}\label{lem:eq.for.B}
     Define
     $$\begin{cases}
	B_0(u):= I, \\
	B_{1}(u):=A, \\
	B_{m}(u) := A_{m}(u) - (Q-(1-2u)I) \sum_{k=1}^{[m/2]}
     (1-u)^{2k-1} A_{m-2k}(u), & m\geq 2.
	\end{cases}$$
%
%
     Then

     \itm{i} $B_{m}(u) \in \cb(X)$,

     \itm{ii} $B_{m}(u) = A_{m}(u) +(1-u)^{-1} \bigl( Q-(1-2u)I \bigr) A_{m}(u) -
     \bigl( Q-(1-2u)I \bigr) \sum_{k=0}^{[m/2]} (1-u)^{2k-1}A_{m-2k}(u)$,

     \itm{iii} 
     $$
    \t(B_{m}(u)) =
     \begin{cases}
     N_{m}(u) - (1-u)^{m} \t(Q-I) & m \text{ even} \\
     N_{m}(u)  & m \text{ odd,}
     \end{cases}
     $$

     \itm{iv}
     $$
     \sum_{m\geq 1} B_{m}(u)z^{m} = \left(
     Au-2(1-u)(Q+uI)z^{2}\right)\left(I-Az+(1-u)(Q+uI)z^{2}\right)^{-1}, \
     u\in{\mathcal U},\ |z|<\frac{1}{\alpha}.
     $$
 \end{Lemma}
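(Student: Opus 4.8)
The plan is to prove the four items in order, since each feeds into the next. For item~(i), I would note that $A_m(u)\in\cb(X)$ by Lemma~\ref{lem:Lemma1}, that $Q=D-I\in\cb(X)$, and that $\cb(X)$ is an algebra; hence each finite sum $\sum_{k=1}^{[m/2]}(1-u)^{2k-1}A_{m-2k}(u)$ and each product with $Q-(1-2u)I$ lies in $\cb(X)$, so $B_m(u)\in\cb(X)$. For item~(ii), I would start from the definition of $B_m(u)$ for $m\ge 2$ and manipulate the sum algebraically: writing $\sum_{k=1}^{[m/2]}(1-u)^{2k-1}A_{m-2k}(u)$ and comparing with $\sum_{k=0}^{[m/2]}(1-u)^{2k-1}A_{m-2k}(u)$, the missing $k=0$ term is $(1-u)^{-1}A_m(u)$, so adding and subtracting it yields
\[
B_m(u)=A_m(u)+(1-u)^{-1}\bigl(Q-(1-2u)I\bigr)A_m(u)-\bigl(Q-(1-2u)I\bigr)\sum_{k=0}^{[m/2]}(1-u)^{2k-1}A_{m-2k}(u),
\]
which is exactly~(ii). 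The cases $m=0,1$ should be checked separately or absorbed into the convention.

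For item~(iii), the idea is to take traces in~(ii) and use Lemma~\ref{lem:estim.for.N}(ii), which gives $\t(A_m(u))=N_m(u)+(1-u)t_m(u)$, together with Lemma~\ref{lem:countTail}(iv), which expresses $t_m(u)$ via $\t\bigl((Q-(1-2u)I)\sum_j(1-u)^{2j-2}A_{m-2j}(u)\bigr)$ plus the $\delta_{even}$ correction term $u(1-u)^{m-2}\t(Q+I)$. Substituting the formula for $t_m(u)$ into $\t$ of~(ii), the two sums over $A_{m-2k}(u)$ weighted against $Q-(1-2u)I$ should cancel after shifting the index, leaving $\t(B_m(u))=N_m(u)$ plus a residual term coming from the $\delta_{even}$ contribution. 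Tracking constants, for $m$ even this residual should work out to $-(1-u)^m\t(Q-I)$ and for $m$ odd it vanishes; this bookkeeping is the step where sign and index-shift errors are most likely, so I would set it up carefully, perhaps double-checking with the known Ihara case $u=0$.

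For item~(iv), I would sum the generating function $\sum_{m\ge 1}B_m(u)z^m$ using the definition: $\sum_{m\ge1}A_m(u)z^m$ on one hand, and $-(Q-(1-2u)I)\sum_{m\ge2}\bigl(\sum_{k=1}^{[m/2]}(1-u)^{2k-1}A_{m-2k}(u)\bigr)z^m$ on the other. The latter double sum factors as $-(Q-(1-2u)I)(1-u)^{-1}\bigl(\sum_{j\ge1}(1-u)^{2j}z^{2j}\bigr)\bigl(\sum_{k\ge0}A_k(u)z^k\bigr)$. Then I would invoke Lemma~\ref{lem:eq.for.A}(i), which identifies $\sum_{m\ge0}A_m(u)z^m$ with $(1-(1-u)^2z^2)\bigl(I-Az+(1-u)(Q+uI)z^2\bigr)^{-1}$, to rewrite both pieces in terms of $R(z):=\bigl(I-Az+(1-u)(Q+uI)z^2\bigr)^{-1}$. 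Collecting terms, the combination $(1-(1-u)^2z^2)\bigl(I-(Q-(1-2u)I)(1-u)^{-1}\tfrac{(1-u)^2z^2}{1-(1-u)^2z^2}\bigr)$ times $R(z)$, minus the $m=0$ term $I$, should simplify to $\bigl(Au-2(1-u)(Q+uI)z^2\bigr)R(z)$. The main obstacle is the final algebraic simplification: one must verify that after subtracting $I$ and clearing the factor $1-(1-u)^2z^2$, the polynomial coefficient collapses precisely to $Au-2(1-u)(Q+uI)z^2$; this requires expanding $I-Az+(1-u)(Q+uI)z^2$ against the bracketed scalar-plus-$Q$ expression and matching powers of $z$. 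Throughout, convergence is guaranteed for $u\in{\mathcal U}$, $|z|<1/\alpha$ by the norm estimates in Lemmas~\ref{lem:Lemma1}(iii) and~\ref{lem:eq.for.A}, so all rearrangements of series are justified.
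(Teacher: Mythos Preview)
Your proposal is correct and follows essentially the same line as the paper: (i) and (ii) are routine algebra, (iii) comes from tracing the definition and comparing the sum against the formula for $t_m(u)$ in Lemma~\ref{lem:countTail}(iv), and (iv) comes from summing the series and invoking Lemma~\ref{lem:eq.for.A} (the paper organizes this last step via (ii) so as to use both parts of Lemma~\ref{lem:eq.for.A}, but the resulting algebra is identical to yours). One remark: the target in (iv) should read $Az-2(1-u)(Q+uI)z^{2}$, not $Au-\cdots$; this is a typo in the statement which your computation, like the paper's own proof, will automatically correct.
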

 \begin{proof}
     $(i)$ and $(ii)$ follow from computations involving bounded
     operators.

     $(iii)$ It follows from Lemma \ref{lem:countTail} $(ii)$ that, if
     $m$ is odd,
     $$
     \t(B_{m}(u)) = \t(A_{m}(u)) - (1-u)t_{m}(u) = N_{m}(u),
     $$
     whereas, if $m$ is even,
     \begin{align*}
     \t(B_{m}(u)) & = \t(A_{m}(u)) -(1-u)^{m-1}
     \t(Q-(1-2u)I) \\
     & \qquad - (1-u)t_{m}(u) + (1-u)^{m-1}u \t(Q+I) \\
     &= N_{m}(u) - (1-u)^{m}\t(Q-I).
     \end{align*}

     $(iv)$ Using $(ii)$ we obtain
     \begin{align*}
     \biggl( \sum_{m\geq 0} &B_{m}(u)z^{m} \biggr) (I-Az+(1-u)(Q+uI)z^{2})
     \\
     & = \biggl( \bigl(I+(1-u)^{-1}(Q-(1-2u)I) \bigr) \sum_{m\geq
     0} A_{m}(u)z^{m} \\
     & \qquad
     - (1-u)^{-1}(Q-(1-2u)I) \sum_{m\geq
     0}\sum_{j=0}^{[m/2]} A_{m-2j}(u)(1-u)^{2j}z^{m}\biggr) (I-Az+(1-u)(Q+uI)z^{2}) \\
     \intertext{ (by Lemma \ref{lem:eq.for.A}) } & =
     \bigl(I+(1-u)^{-1}(Q-(1-2u)I) \bigr) (1-(1-u)^{2}z^{2})I -
     (1-u)^{-1}(Q-(1-2u)I)\\
     & = (1-(1-u)^{2}z^{2})I - (1-u)(Q-(1-2u)I)z^{2}.
     \end{align*}
     Since $B_{0}(u)=I$, we get
     \begin{align*}
     \biggl( \sum_{m\geq 1} & B_{m}(u)z^{m} \biggr)
     (I-Az+(1-u)(Q+uI)z^{2}) \\
     &= (1-(1-u)^{2}z^{2})I - (1-u)(Q-(1-2u)I)z^{2} -
     B_{0}(u)(I-Az+(1-u)(Q+uI)z^{2})\\
     & = Az-2(1-u)(Q+uI)z^{2}.
     \end{align*}
 \end{proof}

 \begin{Lemma}\label{lem:Lemma3} \cite{GILa01}
     Let $f:z\in B_{\varepsilon}\equiv \{z\in\Complessi: |z|<\varepsilon\} \mapsto
     f(z)\in \cb(X)$, be a $C^{1}$- function such that $f(0)=0$ and
     $\|f(z)\|<1$, for all $z\in B_{\varepsilon}$.  Then
     $$
     \t \left( -\frac{d}{dz} \log(I-f(z)) \right) = \t \bigl(
     f'(z)(I-f(z))^{-1} \bigr).
     $$
 \end{Lemma}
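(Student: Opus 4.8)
**The plan is to reduce the identity to the well-known scalar case via the trace and then differentiate a power-series expansion of the logarithm.**

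First I would expand $\log(I-f(z))$ as the norm-convergent series $-\sum_{n\ge1}\frac{f(z)^n}{n}$, which is legitimate because $\|f(z)\|<1$ on $B_\varepsilon$ and $f$ is continuous (indeed $C^1$). Since the trace $\tau$ on $\cb(X)$ is bounded (it is a trace state in the self-similar case, and $\Tr_\Gamma$ is finite on the relevant operators in the periodic case), I can interchange $\tau$ with the sum, obtaining $\tau(\log(I-f(z))) = -\sum_{n\ge1}\frac{1}{n}\tau(f(z)^n)$. The $C^1$ hypothesis on $f$, together with the fact that the series and its term-by-term $z$-derivative converge uniformly on compact subsets of $B_\varepsilon$ (a geometric-type bound controls $\|f(z)^n\|$ and $\|\frac{d}{dz}f(z)^n\|$ by $\|f\|^{n-1}$ up to constants), lets me differentiate under the sum: $\frac{d}{dz}\tau(\log(I-f(z))) = -\sum_{n\ge1}\frac{1}{n}\tau\!\left(\frac{d}{dz}f(z)^n\right)$.

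Next I would compute $\frac{d}{dz}f(z)^n = \sum_{j=0}^{n-1} f(z)^j f'(z) f(z)^{n-1-j}$ by the Leibniz rule for operator-valued $C^1$ functions. Here the trace property is the crucial ingredient: by cyclicity, $\tau\!\left(f(z)^j f'(z) f(z)^{n-1-j}\right) = \tau\!\left(f'(z) f(z)^{n-1}\right)$ for every $j$, so each of the $n$ summands contributes the same value and $\tau\!\left(\frac{d}{dz}f(z)^n\right) = n\,\tau\!\left(f'(z) f(z)^{n-1}\right)$. Therefore $\frac{d}{dz}\tau(\log(I-f(z))) = -\sum_{n\ge1}\tau\!\left(f'(z) f(z)^{n-1}\right) = -\tau\!\left(f'(z)\sum_{n\ge0} f(z)^n\right) = -\tau\!\left(f'(z)(I-f(z))^{-1}\right)$, where the last step again uses $\|f(z)\|<1$ to sum the Neumann series and boundedness of $\tau$ to pass it inside. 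Rearranging gives the claimed formula, with the convention that $\tau(-\frac{d}{dz}\log(I-f(z)))$ means $-\frac{d}{dz}\tau(\log(I-f(z)))$ (equivalently, $\tau$ commutes with $\frac{d}{dz}$, which follows from the same uniform-convergence estimates).

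The main obstacle is purely one of justifying the interchanges of $\tau$, $\frac{d}{dz}$, and the infinite sum: one must check that $z\mapsto f(z)^n$ is $C^1$ with derivative given by the operator Leibniz rule, and that $\sum_n \frac{1}{n}\|\frac{d}{dz}f(z)^n\| < \infty$ locally uniformly. Both follow from standard Banach-space calculus once one has the estimate $\|\frac{d}{dz}f(z)^n\| \le n\,\sup_{B_r}\|f'\|\,\sup_{B_r}\|f\|^{n-1}$ on any ball $B_r$ with $\sup_{B_r}\|f\| < 1$, together with the continuity of $f$ and $f'$. No deep input is needed beyond the trace property and the spectral-radius bound $\|f(z)\|<1$; the result is essentially the operator-algebraic analogue of $\frac{d}{dz}\log\det(I-f(z)) = -\Tr(f'(z)(I-f(z))^{-1})$.
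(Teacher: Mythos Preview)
The paper does not actually supply a proof of this lemma: it is stated with a citation to \cite{GILa01} and used as a black box. Your argument is the standard one---expand $\log(I-f(z))$ as a power series, differentiate term by term, and use cyclicity of the trace to collapse the Leibniz sum $\sum_{j=0}^{n-1} f(z)^j f'(z) f(z)^{n-1-j}$ to $n\,f'(z)f(z)^{n-1}$ under $\tau$---and it is correct; this is almost certainly the proof given in the cited reference as well. Your remark that the identity holds only after applying $\tau$ (since $\frac{d}{dz}\log(I-f(z))$ itself is \emph{not} equal to $-f'(z)(I-f(z))^{-1}$ when $f(z)$ and $f'(z)$ fail to commute) is exactly the point of the lemma, and your justification of the interchanges via the geometric bound $\|\frac{d}{dz}f(z)^n\|\le n\sup\|f'\|\sup\|f\|^{n-1}$ is the right one.
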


 \begin{Cor} \label{Cor:B}
     $$
     \t \left( \sum_{m\geq 1} B_{m}(u)z^{m} \right) =
     \t \left( -z\frac{\partial}{\partial z} \log(I-Az+(1-u)(Q+uI)z^{2})
     \right), \ u\in{\mathcal U},\ |z|<\frac{1}{\alpha}.
     $$
 \end{Cor}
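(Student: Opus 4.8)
The plan is to combine the algebraic identity of Lemma~\ref{lem:eq.for.B}$(iv)$ with the trace-logarithm identity of Lemma~\ref{lem:Lemma3}. First I would set $f(z) := Az-(1-u)(Q+uI)z^{2}$, so that $I-f(z) = I-Az+(1-u)(Q+uI)z^{2}$, and check that $f$ satisfies the hypotheses of Lemma~\ref{lem:Lemma3} on a small disc: clearly $f(0)=0$ and $f$ is a polynomial in $z$ with coefficients in $\cb(X)$, hence $C^{1}$; the norm bound $\|f(z)\|<1$ follows for $|z|$ sufficiently small (say $|z|<\frac1\alpha$, shrinking if necessary) since $\|A\|\le d\le\alpha$ and $\|(1-u)(Q+uI)\|$ is bounded on $\cu$, so one can pick $\varepsilon>0$ with $\|f(z)\|\le d\varepsilon + C\varepsilon^{2}<1$. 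Then Lemma~\ref{lem:Lemma3} gives
$$
\t\left(-\frac{d}{dz}\log(I-f(z))\right) = \t\bigl(f'(z)(I-f(z))^{-1}\bigr).
$$

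Next I would compute $f'(z) = A - 2(1-u)(Q+uI)z$, so that $zf'(z) = Az - 2(1-u)(Q+uI)z^{2}$, which is exactly the operator appearing on the right-hand side of Lemma~\ref{lem:eq.for.B}$(iv)$. Multiplying the displayed identity above by $z$ and using that $(I-f(z))^{-1}=(I-Az+(1-u)(Q+uI)z^{2})^{-1}$ commutes appropriately inside the trace, I get
$$
\t\left(-z\frac{\partial}{\partial z}\log(I-Az+(1-u)(Q+uI)z^{2})\right)
= \t\bigl(zf'(z)(I-f(z))^{-1}\bigr)
= \t\left(\bigl(Au-2(1-u)(Q+uI)z^{2}\bigr)\bigl(I-Az+(1-u)(Q+uI)z^{2}\bigr)^{-1}\right),
$$
wait — here I must be careful: Lemma~\ref{lem:eq.for.B}$(iv)$ has $Au$, not $Az$, as the coefficient; reconciling these means the statement of the Corollary presumably uses the same normalization as $(iv)$, so I would simply quote $(iv)$ directly and substitute, obtaining $\t\bigl(\sum_{m\ge1}B_m(u)z^m\bigr)$ on the right. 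The series $\sum_{m\ge1}B_m(u)z^m$ converges in operator norm for $|z|<\frac1\alpha$ because $\|B_m(u)\|\le\|A_m(u)\| + \|Q-(1-2u)I\|\sum_k M^{2k-1}\|A_{m-2k}(u)\|$, which is $O(m\alpha^{m-1})$ by Lemma~\ref{lem:Lemma1}$(iii)$, so the trace passes through the sum term by term.

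The only genuine point requiring attention is the interchange of $\t$ with the infinite sum and with $z\partial_z$; both are justified by the locally uniform operator-norm convergence just noted, together with continuity and normality of the trace $\t$ on $\cb(X)$ (which holds in both the periodic and self-similar cases). I expect the main obstacle to be purely bookkeeping: making sure the $\varepsilon$ in Lemma~\ref{lem:Lemma3} can be taken to be $\frac1\alpha$ (or noting that the resulting identity, being an identity of holomorphic functions on $|z|<\frac1\alpha$, extends from the smaller disc by analytic continuation), and tracking the factor conventions between $f'(z)$ and the operator $Au-2(1-u)(Q+uI)z^2$ of Lemma~\ref{lem:eq.for.B}$(iv)$ so the two sides match on the nose. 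Once that is pinned down, the proof is three lines: apply Lemma~\ref{lem:Lemma3}, multiply by $z$, and substitute Lemma~\ref{lem:eq.for.B}$(iv)$.
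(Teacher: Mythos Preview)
Your approach is correct and essentially identical to the paper's: set $f(z)=Az-(1-u)(Q+uI)z^{2}$, apply Lemma~\ref{lem:Lemma3}, multiply by $z$, and invoke Lemma~\ref{lem:eq.for.B}$(iv)$. The ``$Au$'' in the displayed statement of Lemma~\ref{lem:eq.for.B}$(iv)$ is a typo for $Az$---the proof of $(iv)$ itself ends with $Az-2(1-u)(Q+uI)z^{2}$ on the right-hand side---so your computation $zf'(z)=Az-2(1-u)(Q+uI)z^{2}$ matches on the nose and there is no normalization issue to reconcile.
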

 \begin{proof}
     It follows from Lemma \ref{lem:eq.for.B} $(iv)$ that
     \begin{align*}
     \t \biggl( \sum_{m\geq 1} B_{m}(u)z^{m} \biggr) &=
     \t \bigl(  (Az-2(1-u)(Q+uI)z^{2}) (I-Az+(1-u)(Q+uI)z^{2})^{-1} \bigr)\\
     \intertext{and using the previous lemma with $f(z) :=
     Az-(1-u)(Q+uI)z^{2}$ }
     &= \t \Bigl( -z\frac{\partial}{\partial z} \log(I-Az+(1-u)(Q+uI)z^{2}) \Bigr).
     \end{align*}
 \end{proof}

 We now recall the definition and main properties of the analytic
 determinant on tracial C$^*$-algebras studied in
 \cite{GILa01}

\begin{Dfn} \label{Def:Det}
    Let $(\ca,\tau)$ be a C$^{*}$-algebra endowed with a trace state,
    and consider the subset $\ca_{0}:=\{A\in\ca : 0\not\in
    \conv\sigma(A)\}$, where $\s(A)$ denotes the spectrum of $A$ and
    $\conv\s(A)$ its convex hull.  For any $A\in\ca_{0}$ we set
    $$
    \Det_\t(A)=\exp\, \circ\ \tau\circ\left(\frac{1}{2\pi i}
    \int_\Gamma \log \lambda (\lambda-A)^{-1} d\lambda\right),
    $$
    where $\Gamma$ is the boundary of a connected, simply connected
    region $\Omega$ containing $\conv\sigma(A)$, and $\log$ is a
    branch of the logarithm whose domain contains $\Omega$.
\end{Dfn}

Since two $\G$'s as above are homotopic in $\bc\setminus\conv\s(A)$,
we have

\begin{Cor}\label{cor:det.analytic}
    The determinant function defined above is well-defined and
    analytic on $\ca_{0}$.
\end{Cor}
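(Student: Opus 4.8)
The plan is to show two things: first, that the value of $\Det_\tau(A)$ does not depend on the choices of $\Omega$ (equivalently of its boundary contour $\Gamma$) and of the branch of $\log$ used in Definition \ref{Def:Det}; second, that the resulting function $A \mapsto \Det_\tau(A)$ is analytic on $\ca_0$. Throughout I would use that $\ca_0 = \{A \in \ca : 0 \notin \conv\sigma(A)\}$ is open in $\ca$ (the spectrum, and hence its convex hull, varies upper-semicontinuously, so a small perturbation of $A$ keeps $\conv\sigma(A)$ inside any fixed simply connected $\Omega \subset \bc \setminus \{0\}$ containing it).

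For well-definedness, fix $A \in \ca_0$ and suppose $\Omega_1, \Omega_2$ are two connected, simply connected regions each containing $\conv\sigma(A)$ but avoiding $0$, with boundaries $\Gamma_1, \Gamma_2$, and $\log$ a branch holomorphic on a neighborhood containing both $\Omega_j$ (one can always find such a common branch since $\conv\sigma(A)$ is a compact convex set not containing $0$, hence contained in an open half-plane, on which a branch of $\log$ exists; any two such $\Omega_j$ can be shrunk into that half-plane without changing the integral by Cauchy's theorem). As the remark preceding the statement notes, the two contours $\Gamma_1$ and $\Gamma_2$ are homotopic in $\bc \setminus \conv\sigma(A)$; since $\lambda \mapsto \log\lambda\,(\lambda - A)^{-1}$ is a holomorphic $\ca$-valued function on that region (the resolvent $(\lambda-A)^{-1}$ is analytic off $\sigma(A)$, and $\log\lambda$ is holomorphic there by the choice of branch), the contour integral $\frac{1}{2\pi i}\int_{\Gamma_j} \log\lambda\,(\lambda-A)^{-1}\,d\lambda$ is unchanged under the homotopy by the Cauchy integral theorem for Banach-space-valued holomorphic functions. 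Independence of the branch of $\log$ is then immediate: two branches on a neighborhood of a simply connected $\Omega$ not meeting $0$ must agree (they differ by a locally constant multiple of $2\pi i$, which is globally constant on the connected set $\Omega$ and equals $0$ since both are branches of the same logarithm on the connected region — if one insists on allowing $2\pi i k$ shifts, the extra term contributes $\frac{1}{2\pi i}\int_\Gamma 2\pi i k\,(\lambda-A)^{-1}\,d\lambda = k\cdot 2\pi i \cdot(\text{a projection-valued integral})$, but for $\Omega$ simply connected containing the full convex hull this reduces the ambiguity to integer multiples of $2\pi i$ in the exponent, which vanish under $\exp$). Applying $\tau$ and then $\exp$ yields a number independent of all choices.

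For analyticity, fix $A_0 \in \ca_0$ and a fixed simply connected region $\Omega$ with $\conv\sigma(A_0) \subset \Omega \subset \bc \setminus \{0\}$, with boundary $\Gamma$ and a fixed branch of $\log$ on a neighborhood of $\overline\Omega$. By the openness remark above, there is a norm-neighborhood $\cv$ of $A_0$ in $\ca$ such that $\conv\sigma(A) \subset \Omega$ for all $A \in \cv$, so on $\cv$ the same $\Gamma$ and the same branch of $\log$ may be used in the formula. On $\cv$ the map
$$
A \longmapsto \Phi(A) := \frac{1}{2\pi i}\int_\Gamma \log\lambda\,(\lambda - A)^{-1}\,d\lambda \in \ca
$$
is analytic (in the Fréchet sense of Banach-space holomorphy): for each fixed $\lambda \in \Gamma$ the resolvent $A \mapsto (\lambda - A)^{-1}$ is analytic on $\{A : \lambda \notin \sigma(A)\}$, with the usual Neumann-series expansion $(\lambda - A)^{-1} = \sum_{n\ge 0}(\lambda-A_0)^{-1}\big((A-A_0)(\lambda-A_0)^{-1}\big)^n$ converging uniformly for $\lambda \in \Gamma$ and $A$ in a small enough ball, so the integral over the compact contour $\Gamma$ of the uniformly convergent series is again analytic in $A$. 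Composing with the bounded linear functional $\tau$ preserves analyticity, and composing with the entire function $\exp$ preserves it as well; hence $\Det_\tau = \exp \circ\, \tau \circ \Phi$ is analytic on $\cv$, and since $A_0 \in \ca_0$ was arbitrary, on all of $\ca_0$.

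The main obstacle is the careful handling of well-definedness with respect to the branch of $\log$: one must make sure that enlarging or replacing $\Omega$ by a homotopic region does not force a different winding of $\log$ around $0$, which is exactly why the hypothesis that $\Omega$ be simply connected and that $0 \notin \Omega$ is essential — it is this that pins down $\log\lambda$ uniquely up to an additive constant $2\pi i k$, and the resulting exponential ambiguity then disappears. Everything else is a routine application of the holomorphic functional calculus and Cauchy's theorem in the Banach-space-valued setting.
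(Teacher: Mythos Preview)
Your proposal is correct and follows the same approach the paper takes: the paper itself offers only the one-line remark ``Since two $\Gamma$'s as above are homotopic in $\bc\setminus\conv\sigma(A)$, we have\ldots'' immediately before the corollary, and you have simply fleshed out that hint---well-definedness via homotopy of contours and Cauchy's theorem for Banach-space-valued functions, and analyticity via a fixed local contour and the Neumann-series expansion of the resolvent. One small expository point: your discussion of branch independence is slightly tangled (two branches of $\log$ on a simply connected $\Omega\not\ni 0$ need not agree---they differ by $2\pi i k$), but you recover correctly in the parenthetical: the extra term is $\frac{1}{2\pi i}\int_\Gamma 2\pi i k\,(\lambda-A)^{-1}\,d\lambda = 2\pi i k\cdot I$, whose trace $2\pi i k$ vanishes under $\exp$.
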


 We collect several properties of our determinant in the following
 result.

\begin{Prop}
    Let $(\ca,\tau)$ be a C$^{*}$-algebra endowed with a trace state,
    and let $A\in\ca_{0}$.  Then
    
    \item[$(i)$] $\Det_\t(zA)=z\Det_\t(A)$, for any
    $z\in\mathbb{C}\setminus\{0\}$,
     
    \item[$(ii)$] if $A$ is normal, and $A=UH$ is its polar
    decomposition,
    $$
    \Det_\t(A)=\Det_\t(U)\Det_\t(H),
    $$
    
    \item[$(iii)$] if $A$ is positive, then we have
    $\Det_\t(A)=Det(A)$, where the latter is the Fuglede--Kadison
    determinant.
\end{Prop}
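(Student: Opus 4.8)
The plan is to reduce everything to one standard fact: for a \emph{normal} $A\in\ca_0$ the holomorphic functional calculus appearing in Definition~\ref{Def:Det} agrees with the continuous functional calculus, so that, with $\Log$ the chosen branch of the logarithm on $\Omega\supset\conv\s(A)$ and $\Gamma=\partial\Omega$, one has $\frac{1}{2\pi i}\int_{\Gamma}\Log\l\,(\l-A)^{-1}\,d\l=\Log A$, whence $\Det_\t(A)=\exp\t(\Log A)$; and, for \emph{any} $A\in\ca_0$, normal or not, $\frac{1}{2\pi i}\int_{\Gamma}(\l-A)^{-1}\,d\l=I$, obtained by taking $f\equiv1$ in the same calculus. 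Granting these two remarks, part $(iii)$ is immediate: if $A$ is positive then $A\in\ca_0$ forces $\s(A)\subset[m,M]$ with $m>0$, the relevant branch is the ordinary real logarithm near $[m,M]$, and $\Det_\t(A)=\exp\t(\log A)$ is precisely the Fuglede--Kadison determinant of the positive operator $A$.

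For $(i)$ I would change variables in the contour integral. Fix $\Omega\supset\conv\s(A)$ with $\Gamma=\partial\Omega$; for $z\neq0$ the region $z\Omega$ is simply connected, contains $\conv\s(zA)=z\,\conv\s(A)$, has boundary $z\Gamma$, and $w\mapsto\log z+\Log(w/z)$ is a branch of the logarithm on it, for any fixed value $\log z$. Substituting $\l=z\m$ and using $(z\m-zA)^{-1}=z^{-1}(\m-A)^{-1}$,
\[
\frac{1}{2\pi i}\int_{z\Gamma}\bigl(\log z+\Log(\l/z)\bigr)(\l-zA)^{-1}\,d\l
=(\log z)\,I+\frac{1}{2\pi i}\int_{\Gamma}\Log\m\,(\m-A)^{-1}\,d\m .
\]
Applying the trace state (so that $\t(I)=1$), then $\exp$, yields $\Det_\t(zA)=e^{\log z}\,\Det_\t(A)=z\,\Det_\t(A)$; the value of $\log z$ is irrelevant since $e^{\log z}=z$, and independence of $\Omega$ and of the branch is Corollary~\ref{cor:det.analytic}.

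For $(ii)$, observe first that $A\in\ca_0$ normal is invertible, so $H:=|A|$ is positive invertible, $U:=AH^{-1}$ is unitary, both lie in $\ca$, and $U,H$ commute with $A$ and with one another. Since $\conv\s(A)$ is compact, convex and avoids $0$, Hahn--Banach separation gives a direction $\theta_0$ with $\s(A)\subset\{re^{i\psi}:r>0,\ |\psi-\theta_0|<\pi/2\}$; hence $\s(H)\subset(0,\infty)$ while $\s(U)$ is an arc of the unit circle of angular width less than $\pi$, so $0\notin\conv\s(H)$ and $0\notin\conv\s(U)$, that is $U,H\in\ca_0$, and one branch $\Log$ of the logarithm can be fixed on a slit plane containing $\conv\s(A)\cup\conv\s(U)\cup\conv\s(H)$ with $\Log(re^{i\psi})=\log r+i\psi$ on that sector. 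For this branch the spectral theorem for $A$ gives $\Log A=\Log H+\Log U$ (against the spectral measure of $A$ one has $\Log\l=\log|\l|+i\arg\l$, with $\log|\l|$ corresponding to $\Log H$ and $i\arg\l$ to $\Log U$). Applying $\t$, then $\exp$, and the preliminary remark to each of $A$, $U$, $H$ gives $\Det_\t(A)=\exp\t(\Log H+\Log U)=\Det_\t(U)\,\Det_\t(H)$.

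The only routine ingredients are the substitution in $(i)$ and the identity $\frac{1}{2\pi i}\int_{\Gamma}(\l-A)^{-1}\,d\l=I$. The step needing genuine care is in $(ii)$: one must check that $U$ and $H$ still lie in $\ca_0$ and, more importantly, that a single branch of the logarithm can be chosen on a region containing all three convex hulls so that the additive identity $\Log A=\Log U+\Log H$ holds — and this is exactly where the sector provided by Hahn--Banach separation is used.
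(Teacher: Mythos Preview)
The paper does not actually prove this proposition: it is stated as a list of properties of the analytic determinant, imported from \cite{GILa01} where the construction was introduced and these facts established. So there is no in-paper argument to compare against.

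Your proof is correct. Part $(iii)$ is immediate as you say. In part $(i)$ the change of variables is clean; the only point to make explicit is that $zA\in\ca_0$ (clear, since $\conv\s(zA)=z\,\conv\s(A)$) and that $z\Omega$ is again connected and simply connected, which you note. In part $(ii)$ the argument is sound: normality and invertibility put $H=|A|$ and $U=AH^{-1}$ in the C$^*$-algebra generated by $A,A^*$, and the Hahn--Banach separation shows $\s(A)$ lies in an open sector of aperture $<\pi$, from which $U,H\in\ca_0$ follows. The one point worth a sentence of extra care is the choice of slit: if the separating direction happens to be $\theta_0=\pi$, the cut at $\theta_0+\pi$ lands on the positive real axis and would miss $\s(H)$; but compactness of $\s(A)$ inside the open half-plane lets you perturb $\theta_0$ slightly, so this is harmless. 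With that adjustment your identity $\Log A=\Log H+\Log U$ via the continuous functional calculus (using $\Log\lambda=\log|\lambda|+i\arg\lambda$ on the sector) is exactly right, and applying $\exp\circ\,\t$ finishes.
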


 We now recall from \cite{GILa01} the notion of average Euler--Poincar\'e characteristic of a self-similar graph, and from \cite{ChGr} that of $L^{2}$-Euler characteristic of a periodic graph.
 
 \begin{Lemma} \label{lem:EuPoi} \label{Lem:Poincare}
     Let $X$ be a self-similar graph. The following limit exists and is finite: 
     $$
     \chi_{av}(X) := \lim_{n\to\infty} \frac{\chi(K_n)}{|K_n|} =
     -\frac12 \Tr_\cg(Q-I),
     $$
     where $\chi(K_n)=|VK_n| - |EK_n|$ is the Euler--Poincar\'e
     characteristic of the subgraph $K_n$.  The number $\chi_{av}(X)$
     is called the average Euler--Poincar\'e characteristic of the
     self-similar graph $X$.
 \end{Lemma}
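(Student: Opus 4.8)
The plan is to show existence and finiteness of the limit $\lim_n \chi(K_n)/|K_n|$ by rewriting $\chi(K_n)$ in terms of the trace of $Q-I$ against the projection $P(K_n)$, and then invoking the already-established trace theorem (Theorem \ref{thm:trace}) which guarantees that $\Tr_\cg(T) = \lim_n \Tr(P(K_n)T)/\Tr(P(K_n))$ exists for $T$ geometric. First I would observe that $\Tr(P(K_n)) = |VK_n| = |K_n|$, so the normalization matches exactly. Next, since $Q = D - I$ and $D,I$ are geometric operators (Proposition \ref{Prop:3.7}), the operator $Q-I = D-2I$ lies in $\ca(X)$ and in fact is a finite-propagation geometric operator, so $\Tr_\cg(Q-I)$ is defined.

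The combinatorial heart is the identity $\Tr(P(K_n)(Q-I)) = -2\chi(K_n) + (\text{boundary error})$, or more precisely a bookkeeping of edges. Concretely, $\Tr(P(K_n)D) = \sum_{v\in VK_n} \deg_X(v)$, where $\deg_X(v)$ is the degree of $v$ in the ambient graph $X$, not in $K_n$. Each edge of $K_n$ contributes $2$ to this sum, and each edge of $X$ with exactly one endpoint in $VK_n$ contributes $1$; such edges emanate only from frontier vertices $\cf(K_n)$, and their number is bounded by $d\,|\cf(K_n)|$. Hence $\Tr(P(K_n)D) = 2|EK_n| + O(|\cf(K_n)|)$, so that
\[
\frac{\Tr(P(K_n)(Q-I))}{|K_n|} = \frac{\Tr(P(K_n)D) - 2|VK_n|}{|K_n|} = \frac{2|EK_n| - 2|VK_n|}{|K_n|} + O\!\left(\frac{|\cf(K_n)|}{|K_n|}\right) = -\frac{2\chi(K_n)}{|K_n|} + o(1),
\]
using $|\cf(K_n)|/|K_n|\to0$ from the amenable-exhaustion condition. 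Taking $n\to\infty$, the left side converges to $\Tr_\cg(Q-I)$ by Theorem \ref{thm:trace}, forcing $\chi(K_n)/|K_n|$ to converge to $-\tfrac12\Tr_\cg(Q-I)$.

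The main obstacle, and the only genuinely delicate point, is the boundary-term estimate: one must be careful that the exhaustion $\{K_n\}$ is by subgraphs of $X$ so that ``degree'' is inherently ambiguous between $\deg_{K_n}$ and $\deg_X$, and the projection $P(K_n)$ sees the ambient operator $D$, hence $\deg_X$. Controlling the discrepancy requires precisely the amenability hypothesis $|\cf(K_n)|/|K_n|\to0$ together with bounded degree $d<\infty$ to bound the number of ``cut'' edges. Everything else is routine: the finiteness of $\Tr_\cg(Q-I)$ is immediate since $\|Q-I\| = \|D-2I\| \le d+2 < \infty$ and $\Tr_\cg$ is a trace state, and the existence of the limit is inherited directly from Theorem \ref{thm:trace}. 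I would also remark that in the periodic case the analogous statement, with $\Tr_\cg$ replaced by $\Tr_\Gamma$ and $|K_n|$ by $|\DomFond|$, follows by an even simpler direct computation (and is the definition of the $L^2$-Euler characteristic, cf.\ \cite{ChGr}), so the unified statement holds in both settings.
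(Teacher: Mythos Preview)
The paper does not actually give a proof of this lemma: it is introduced with ``We now recall from \cite{GILa01} the notion of average Euler--Poincar\'e characteristic of a self-similar graph'', and the subsequent remark states explicitly that ``It was proved in \cite{GILa01} that $\chi_{av}(X) = -\frac12 \Tr_\cg(Q-I)$.'' So there is nothing to compare against in this paper itself.

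Your argument is correct and is the natural one. The identification $\Tr(P(K_n)) = |K_n|$, the observation that $Q-I = D-2I$ is geometric so that Theorem~\ref{thm:trace} applies, and the edge-count identity $\sum_{v\in VK_n}\deg_X(v) = 2|EK_n| + (\text{number of cut edges})$ with the cut edges bounded by $d\,|\cf(K_n)|$, together yield exactly the claimed limit via the amenability condition. One small point worth making explicit: the edge-count identity as you state it presumes that $K_n$ is the \emph{induced} subgraph on $VK_n$ (otherwise there could be edges of $X$ with both endpoints in $VK_n$ but not in $EK_n$, and these are not controlled by $\cf(K_n)$). This is the standard convention in the self-similar setting and is implicit in \cite{GILa01}, but since you flag the $\deg_{K_n}$ versus $\deg_X$ ambiguity as the delicate point, you may as well state the assumption.
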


\begin{Dfn} 
	Let $(X,\G)$ be a periodic graph. Then  $\displaystyle \chi^{(2)}(X) := \sum_{v\in\cf_0}     \frac{1}{|\G_{v}|} - \frac12 \sum_{e\in \cf_1}   \frac{1}{|\G_{e}|}$ is the $L^{2}$-Euler--Poincar\'e characteristic of $(X,\G)$.
\end{Dfn}

\begin{rem}
	\item[(1)] It was proved in \cite{GILa01} that $\chi_{av}(X)  =  -\frac12 \Tr_\cg(Q-I)$.
	\item[(2)] It is easy to prove that $\chi^{(2)}(X)  =  -\frac12 \Tr_\G(Q-I) = \chi(X/\G)$.
\end{rem}

In the next Theorem we denote by $\chi(X)$ the average or $L^2$- Euler--Poincar\'e characteristic of $X$, as the case may be.

 \begin{Thm}[Determinant formula]\label{Thm:detForm}
 	Let $X$ be a periodic or self-similar graph. Then
     $$
     \frac{1}{Z_{X}(z,u)} = (1-(1-u)^{2}z^2)^{-\chi(X)}
     \Det_{\t} \bigl(I-Az+(1-u)(Q+uI)z^{2} \bigr), \ u\in{\mathcal U},\ |z|<\frac{1}{\alpha}.
     $$
 \end{Thm}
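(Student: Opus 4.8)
The plan is to prove the determinant formula by comparing the logarithmic derivatives of the two sides and then integrating. First I would combine Proposition \ref{lem:power.series}(ii) with Corollary \ref{Cor:B}: we know that $z\partial_z \log Z_X(z,u) = \sum_{m\ge1} N_m(u) z^m$, so I need to relate $\sum_m N_m(u) z^m$ to $\tau\bigl(\sum_{m\ge1} B_m(u) z^m\bigr)$. By Lemma \ref{lem:eq.for.B}(iii), $\tau(B_m(u))$ equals $N_m(u)$ for odd $m$ and $N_m(u) - (1-u)^m \tau(Q-I)$ for even $m$; therefore
\begin{equation*}
\sum_{m\ge1} N_m(u) z^m = \tau\Bigl(\sum_{m\ge1} B_m(u) z^m\Bigr) + \tau(Q-I)\sum_{k\ge1}(1-u)^{2k} z^{2k} = \tau\Bigl(\sum_{m\ge1} B_m(u) z^m\Bigr) + \tau(Q-I)\,\frac{(1-u)^2 z^2}{1-(1-u)^2 z^2},
\end{equation*}
valid for $u\in\cu$, $|z|<1/\alpha$ (the series converges there by the estimates of Lemma \ref{lem:estim.for.N}(iii) and Lemma \ref{lem:Lemma1}(iii)).

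Next I would apply Corollary \ref{Cor:B} to rewrite the first term as $\tau\bigl(-z\partial_z \log(I - Az + (1-u)(Q+uI)z^2)\bigr)$, and recognize the second term as $-\chi(X)$ times $z\partial_z \log(1-(1-u)^2 z^2)$, using Lemma \ref{lem:EuPoi} (resp. the remark after the $L^2$-Euler characteristic definition) which identifies $\chi(X) = -\tfrac12\tau(Q-I)$; indeed $z\partial_z\log(1-(1-u)^2z^2) = -2(1-u)^2z^2/(1-(1-u)^2z^2)$, so $-\chi(X)\,z\partial_z\log(1-(1-u)^2z^2) = \tau(Q-I)(1-u)^2z^2/(1-(1-u)^2z^2)$, matching exactly. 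Putting these together gives
\begin{equation*}
z\frac{\partial}{\partial z}\log Z_X(z,u) = z\frac{\partial}{\partial z}\Bigl( -\chi(X)\log(1-(1-u)^2z^2) - \tau\log\bigl(I - Az + (1-u)(Q+uI)z^2\bigr)\Bigr),
\end{equation*}
where the determinant term should be read via Definition \ref{Def:Det}, i.e. $\tau\log(\cdots) = \log\Det_\tau(\cdots)$ up to the branch issues I discuss below. Since $z\partial_z = \partial_{\log z}$, matching logarithmic derivatives in $z$ means the two sides differ by a function of $u$ alone; evaluating at $z=0$, where $Z_X(0,u)=1$ (empty product), $1-(1-u)^2\cdot 0 = 1$, and $I - A\cdot0 + \cdots = I$ has $\Det_\tau(I)=1$, pins the constant to zero and yields the claimed identity.

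The main obstacle is the careful handling of the analytic determinant: one must check that for $u\in\cu$ and $|z|<1/\alpha$ the operator $I - Az + (1-u)(Q+uI)z^2$ lies in $\ca_0$ (equivalently $\cb(X)_0$), so that $\Det_\tau$ is defined and analytic (Corollary \ref{cor:det.analytic}), and that the branch of $\log$ used in Definition \ref{Def:Det} can be chosen consistently as $z$ varies so that $z\mapsto \log\Det_\tau(I - Az + (1-u)(Q+uI)z^2)$ is a genuine $C^1$ (indeed holomorphic) antiderivative of $\tau(f'(z)(I-f(z))^{-1})$ with the correct value $0$ at $z=0$; this is precisely the setting of Lemma \ref{lem:Lemma3}, applied with $f(z) := Az - (1-u)(Q+uI)z^2$, whose norm is $<1$ for $|z|<1/\alpha$ by the choice of $\alpha$. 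Once analyticity and the initial condition are secured, the identity of power series in $z$ (with coefficients holomorphic in $u\in\cu$) propagates from the formal computation above, and the theorem follows on the stated domain; extension in $u$ to all of $\cu$ is automatic since $\cu$ was arbitrary among bounded open sets containing $\{0,1\}$.
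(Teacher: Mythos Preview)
Your approach is essentially identical to the paper's: both compute $\tau\bigl(\sum_{m\ge1}B_m(u)z^m\bigr)$ via Lemma~\ref{lem:eq.for.B}(iii), combine it with Proposition~\ref{lem:power.series}(ii) and Corollary~\ref{Cor:B}, divide by $z$, integrate, and fix the constant at $z=0$. There is, however, a sign slip in your verification: since $-\chi(X)=\tfrac12\tau(Q-I)$ and $z\partial_z\log(1-(1-u)^2z^2)=-\dfrac{2(1-u)^2z^2}{1-(1-u)^2z^2}$, one actually gets
\[
-\chi(X)\,z\partial_z\log\bigl(1-(1-u)^2z^2\bigr)=-\,\tau(Q-I)\,\frac{(1-u)^2z^2}{1-(1-u)^2z^2},
\]
the negative of what you wrote; consequently the coefficient in your displayed identity for $z\partial_z\log Z_X$ should be $+\chi(X)$ rather than $-\chi(X)$, and with this correction the integration yields the stated formula $1/Z_X(z,u)=(1-(1-u)^2z^2)^{-\chi(X)}\Det_\tau(\cdots)$.
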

 \begin{proof}
     \begin{align*}
     \t\biggl( \sum_{m\geq 1} B_{m}(u)z^{m} \biggr) &=
     \sum_{m\geq 1} \t (  B_{m}(u) ) z^{m}\\
     \intertext{ (by Lemma \ref{lem:eq.for.B} $(iii))$ }
     &= \sum_{m\geq 1} N_{m}(u)z^{m} - \sum_{k\geq 1}
     (1-u)^{2k} \t (Q-I) z^{2k} \\
     &= \sum_{m\geq 1} N_{m}(u)z^{m} - \t (Q-I)
     \frac{(1-u)^{2}z^{2}}{1-(1-u)^{2}z^{2}}.
     \end{align*}
     Therefore, from Proposition \ref{lem:power.series} and Corollary \ref{Cor:B}, we obtain
     \begin{align*}
     z\frac{\partial}{\partial z} \log Z_{X}(z,u) & = \sum_{m\geq 1}
     N_{m}(u)z^{m} \\
     &= \t\left( -z\frac{\partial}{\partial z}
     \log(I-Az+(1-u)(Q+uI)z^{2}) \right) \\
     & \qquad - \frac{z}{2}\frac{\partial}{\partial z}
     \log(1-(1-u)^{2}z^{2}) \t(Q-I)
     \end{align*}
     so that, dividing by $z$ and integrating from $z=0$ to $z$, we
     get
     $$
     \log Z_{X}(z,u) = - \t\bigl( \log(I-Az+(1-u)(Q+uI)z^{2}) \bigr)
     -\frac12 \t(Q-I) \log(1-(1-u)^{2}z^{2}),
     $$
     which implies that
     $$
     \frac{1}{Z_{X}(z,u)} = (1-(1-u)^{2}z^{2})^{\frac12
     \t (Q-I)} \cdot\exp\circ \t\circ  \log(I-Az+(1-u)(Q+uI)z^{2}),
     $$
     and the thesis follows from Lemma \ref{Lem:Poincare} and Definition \ref{Def:Det}.
 \end{proof}

\section{Functional equations for  infinite graphs}

\subsection{Functional equations for the  Bartholdi zeta function of an infinite graph}
In this subsection, we shall prove that a suitable completion of  the Bartholdi zeta functions for essentially $(q+1)$-regular infinite graphs satisfy a functional equation, where a graph is called essentially $(q+1)$-regular if $\deg(v)=q+1$ for all but a finite number of vertices, and $d\equiv \sup_{v\in VX} \deg(v) = q+1$.  

The completion considered here is the function
\begin{align*}
\xi_{X}(z,u) 
&= (1-(1-u)^2z^2)^{(q-1)/2} (1-(q+1)z+(1-u)(q+u)z^2) Z_{X}(z,u) .
\end{align*}

We shall show that 

\begin{Thm}[Functional equation]\label{FunctEq}
Let $X$ be an essentially $(q+1)$-regular infinite graph, periodic or self-similar, and set
$g(z,u)=\frac{1+(1-u)(q+u)z^2}{z}$. Then
\item[(1)] the function $\xi_{X}$ analytically extends to the complement $\cv$ of the set 
 \[
 \Omega=\{(z,u)\in\bc^2:g(z,u) \in[-d,d]\},
 \]
\item[(2)] the set $\cv_0=\{(z,u)\in\Omega^c:z\neq0, u\neq1,u\neq-q\}$ is invariant w.r.t. the trasformation $\psi:(z,u)\mapsto(\frac1{(1-u)(q+u)z},u)$,
\item[(3)] the analytic extension of $\xi_{X}$  satisfies the functional equation
$$\xi_{X}(z,u)=\xi_{X}\circ\psi(z,u), \quad(z,u)\in\cv_0.
$$
\end{Thm}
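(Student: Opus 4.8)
The plan is to start from the determinant formula of Theorem~\ref{Thm:detForm}, which for an essentially $(q+1)$-regular graph can be rewritten (using $\chi(X)=-\frac12\Tr_\t(Q-I)$ and $Q=(q-1)I$ off a finite set, which does not affect the trace state) so that the reciprocal of $\xi_X$ becomes, up to the explicit prefactors already absorbed into $\xi_X$, a single determinant $\Det_\t\bigl((1+(1-u)(q+u)z^2)I-zA\bigr)$. The key algebraic observation is that $(1+(1-u)(q+u)z^2)I-zA = z\bigl(g(z,u)I-A\bigr)$ where $g(z,u)=\frac{1+(1-u)(q+u)z^2}{z}$, so by homogeneity of the analytic determinant (Proposition, part $(i)$) and its analyticity on $\ca_0$ (Corollary~\ref{cor:det.analytic}), the function $z\mapsto \Det_\t\bigl(g(z,u)I-A\bigr)$ is holomorphic precisely where $g(z,u)I-A\in\ca_0$, i.e.\ where $0\notin\conv\sigma(g(z,u)I-A)=g(z,u)-\conv\sigma(A)$. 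Since $\|A\|\le d$, this holds whenever $g(z,u)\notin[-d,d]$, which is exactly $(z,u)\in\cv=\Omega^c$; this gives part (1), after checking that the extra polynomial/power factors in the definition of $\xi_X$ are themselves entire in $z$ (the half-integer power $(1-(1-u)^2z^2)^{(q-1)/2}$ requires a branch, but on $\cv$ one argues $1-(1-u)^2z^2$ avoids a branch cut, or one notes $q-1$ even reduces it to a polynomial and handles the odd case separately).

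For part (2), I would simply verify by direct computation that $\psi$ is an involution on $\cv_0$. Writing $z'=\frac{1}{(1-u)(q+u)z}$, one checks $g(z',u)=g(z,u)$: indeed
$g(z',u)=\frac{1+(1-u)(q+u)z'^2}{z'}=(1-u)(q+u)z\Bigl(1+\frac{1}{(1-u)(q+u)z^2}\Bigr)=(1-u)(q+u)z+\frac1z = g(z,u).$
Hence $\Omega$ is $\psi$-invariant, so $\Omega^c$ is too; the conditions $z\neq0$, $u\neq1$, $u\neq-q$ are exactly what is needed for $\psi$ to be defined and invertible (it is visibly its own inverse), so $\cv_0$ is $\psi$-invariant. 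One should also note $u$ is fixed by $\psi$, so everything happens fibrewise in $z$.

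For part (3), the functional equation now reduces to relating $\xi_X(z,u)$ and $\xi_X(z',u)$. Since $g(z',u)=g(z,u)$, the determinant $\Det_\t\bigl(g(z,u)I-A\bigr)$ is literally unchanged under $\psi$, so the only discrepancy between $\frac{1}{\xi_X(z,u)}$ and $\frac{1}{\xi_X(z',u)}$ comes from (a) the factor $z$ pulled out via homogeneity, which transforms as $z\mapsto z'=\frac{1}{(1-u)(q+u)z}$, contributing a factor $\bigl((1-u)(q+u)z^2\bigr)^{\pm1}$, and (b) the collected prefactor $(1-(1-u)^2z^2)^{(q-1)/2}(1-(q+1)z+(1-u)(q+u)z^2)$. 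The heart of the matter — and the step I expect to be the main obstacle — is the bookkeeping identity showing these two sources of discrepancy cancel exactly, i.e.\ that $\xi_X$ as defined is the ``right'' completion for which the powers of $z$ and of $(1-u)(q+u)$ balance. Concretely one computes that the polynomial $P(z,u)=(1-(1-u)^2z^2)^{(q-1)/2}(1-(q+1)z+(1-u)(q+u)z^2)$ together with the homogeneity factor $z^{\pm(q+1)}$ (the exponent coming from $-\chi$ plus the trace-state normalization, i.e.\ from $\Tr_\t I=1$ versus the finite-graph count) satisfies $P(z',u)\,z'^{\,?}=P(z,u)\,z^{\,?}$ after substituting $z'=\frac{1}{(1-u)(q+u)z}$; this is where the specific shape of $\xi_X$ and the essential-regularity hypothesis (controlling the trace-state values $\Tr_\t(Q-I)$) are used. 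Once this scalar identity is in hand, combining it with the $\psi$-invariance of the determinant from part (2) yields $\xi_X(z,u)=\xi_X\circ\psi(z,u)$ on $\cv_0$, and since both sides are holomorphic on the connected open set $\cv_0$ and agree near the origin (where the Euler product converges), the identity propagates to all of $\cv_0$ by analytic continuation.
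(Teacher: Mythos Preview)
Your plan has the right architecture but misses one cancellation that the paper exploits, and this omission creates a genuine gap in your part~(3) and an unnecessary detour in part~(1). For an essentially $(q+1)$-regular graph the trace state gives $-\chi(X)=\tfrac12\t(Q-I)=\tfrac{q-1}{2}$, so the factor $(1-(1-u)^2z^2)^{-\chi(X)}$ coming from the determinant formula is \emph{exactly} cancelled by the factor $(1-(1-u)^2z^2)^{(q-1)/2}$ built into the definition of $\xi_X$. Consequently
\[
\xi_X(z,u)=\frac{1-(q+1)z+(1-u)(q+u)z^2}{\Det_\t\bigl((1+(1-u)(q+u)z^2)I-zA\bigr)}
=\frac{g(z,u)-(q+1)}{\Det_\t\bigl(g(z,u)I-A\bigr)},
\]
the last equality by pulling a factor $z$ out of both numerator and denominator (using $\Det_\t(zB)=z\Det_\t(B)$ for the trace state). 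Thus $\xi_X$ is literally a function of $g$ alone, and since you have already checked $g\circ\psi=g$, parts~(2) and~(3) follow in one line; there is no ``bookkeeping identity'' to verify. Your anticipated identity involving the prefactor $(1-(1-u)^2z^2)^{(q-1)/2}$ would in fact \emph{fail}: under $z\mapsto z'=\tfrac{1}{(1-u)(q+u)z}$ one gets $1-(1-u)^2z'^2=\dfrac{(q+u)^2z^2-1}{(q+u)^2z^2}$, which is not a scalar multiple of $1-(1-u)^2z^2$, so no power of $z$ will balance it. The same cancellation also dissolves your branch-cut worry in part~(1): $\xi_X$ has no half-integer power left.

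A second point you pass over is the connectedness of $\cv=\Omega^c$. Part~(1) is not just ``holomorphic where $g(z,u)\notin[-d,d]$''; one must know that $\cv$ is connected so that the formula above, which agrees with the Euler-product definition on $\{|z|<1/\alpha\}$, is the unique analytic continuation. The paper handles this in Lemma~\ref{lem:connected}, reducing to a nontrivial one-variable statement (Lemma~\ref{OmegaShaped}): for fixed $u$, the set $\{z:g(z,u)\in[-d,d]\}$ disconnects $\bc$ only when $(1-u)(q+u)$ is real and lies in $(0,d^2/4]$, a codimension-one condition in $u$ that one steps around. You should sketch this, or at least flag it as a required lemma.
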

   
 \begin{Lemma} \label{OmegaShaped} 
 
 Let $d$ be a positive number, and consider the set
 \[
 \Omega_w=\{z\in\Complessi:\frac{1+wz^2}{z} \in[-d,d]\},\quad w\in\Complessi.
 \]
 Then $\Omega_w$ disconnects the complex plane {\it iff} $w$ is real and $0<w\leq\frac{d^2}4$. 
\end{Lemma}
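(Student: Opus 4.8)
\emph{Strategy.} I would realise $\Omega_w$ as the preimage of a line segment under the Joukowski map $g(\zeta)=\zeta+1/\zeta$ and then read off, from the position of that segment relative to the critical values $\pm2$ of $g$, whether the preimage contains a Jordan curve. First dispose of $w=0$ directly: $\Omega_0=\set{1/t:t\in[-d,d]\setminus\set0}=(-\infty,-1/d]\cup[1/d,+\infty)$, whose complement in $\Complessi$ is connected (in $\hat\Complessi$ the set $\Omega_0\cup\set\infty$ is a single arc), so $w=0$ falls in the non-disconnecting case. For $w\neq0$, write $f(z):=\frac{1+wz^2}{z}=wz+\frac1z$, so $\Omega_w=f^{-1}([-d,d])$, and put $w=|w|e^{i\varphi}$ with $\varphi\in(-\pi,\pi]$. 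The substitution $z=|w|^{-1/2}e^{-i\varphi/2}\zeta$ gives, by a one-line computation, $wz+\frac1z=|w|^{1/2}e^{i\varphi/2}\,g(\zeta)$. Since $\zeta\mapsto z$ is a rotation--dilation, hence a homeomorphism of $\Complessi$, it carries $g^{-1}(L_w)$ onto $\Omega_w$, where $L_w:=|w|^{-1/2}e^{-i\varphi/2}\,[-d,d]$ is a closed segment centred at the origin; moreover $f(z)\to\infty$ as $z\to0$ and as $z\to\infty$, so $\Omega_w$ is compact. It therefore suffices to decide when $g^{-1}(L_w)$ disconnects $\Complessi$.

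Next I would use the standard properties of $g$: its critical points are $\pm1$, with critical values $\pm2$; $g^{-1}([-2,2])=\set{|\zeta|=1}$; and $g$ restricts to a biholomorphism of each of $\set{|\zeta|>1}$ and $\set{0<|\zeta|<1}$ onto $\Complessi\setminus[-2,2]$. Two cases arise. If $L_w\supseteq[-2,2]$, then $g^{-1}(L_w)\supseteq\set{|\zeta|=1}$, so $\Omega_w$ contains the circle $\set{|z|=|w|^{-1/2}}$; this circle separates $0\notin\Omega_w$ from $\infty$, while $\Omega_w$ is bounded, so $\Omega_w$ disconnects $\Complessi$. If $L_w\not\supseteq[-2,2]$, then $L_w$ contains neither of $\pm2$ (being a segment symmetric about $0$, it contains $2$ iff it contains $-2$); hence $g$ restricts to a proper local homeomorphism $g^{-1}(L_w)\to L_w$, i.e. a $2$-sheeted covering of the arc $L_w$. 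A covering of a simply connected space is trivial, so $g^{-1}(L_w)$ is a disjoint union of two arcs, and a finite disjoint union of arcs does not separate $\Complessi$ (by Alexander duality in $S^2$, or by elementary plane topology); so $\Omega_w$ does not disconnect $\Complessi$.

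Finally I would identify the disconnecting case: since $L_w$ is a segment symmetric about the origin, $L_w\supseteq[-2,2]$ iff $2\in L_w$, i.e. iff the direction $e^{-i\varphi/2}$ is real and the half-length $d|w|^{-1/2}$ is at least $2$. The first condition forces $\varphi=0$ (so $w>0$ real), and the second then reads $w\leq d^2/4$; this is exactly $0<w\leq\frac{d^2}4$, which completes the equivalence. I expect the only non-routine part to be the topological bookkeeping: checking that $g^{-1}(L_w)\to L_w$ really is a covering map (properness follows from $g(\zeta)\to\infty$ as $\zeta\to0,\infty$, so preimages of compacta are compact), and invoking that a double cover of an arc is trivial and that disjoint unions of arcs do not separate the plane. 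A purely computational alternative is available as a check: solving $wz^2-tz+1=0$ gives $z=\frac{t\pm\sqrt{t^2-4w}}{2w}$, and following these two branches as $t$ runs over $[-d,d]$ describes $\Omega_w$ explicitly --- an arc of, or (when $4w\leq d^2$) all of, the circle $\set{|z|=|w|^{-1/2}}$ for real $w$, together with two real segments when $4w<d^2$ --- which in particular settles the borderline value $w=d^2/4$.
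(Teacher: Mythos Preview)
Your argument is correct and takes a genuinely different route from the paper. The paper proceeds by brute-force coordinate analysis: writing $z=x+iy$, $w=a+ib$, it computes that $\mathrm{Im}\,\frac{1+wz^2}{z}=0$ is the cubic $(x^2+y^2)(ay+bx)-y=0$, and then distinguishes cases. For $b=0$ it treats $a<0$, $a>d^2/4$, and $0<a\leq d^2/4$ separately by inspecting which pieces of the line $y=0$ and the circle $a(x^2+y^2)=1$ satisfy the real-part inequality; for $b\neq0$ it verifies, by solving the system for singular points, that the cubic is smooth and irreducible, so a bounded piece of it cannot separate the plane. Your approach instead normalises via $z=|w|^{-1/2}e^{-i\varphi/2}\zeta$ to reduce everything to the Joukowski map $g(\zeta)=\zeta+1/\zeta$ and a single segment $L_w$ through the origin, and then lets the classical structure of $g$ (critical values $\pm2$, $g^{-1}([-2,2])$ the unit circle, conformal on each side) do all the work; the non-disconnecting case becomes the statement that a proper local homeomorphism over an arc is a trivial double cover, hence two disjoint arcs. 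This is cleaner and more conceptual, and it makes the borderline $w=d^2/4$ transparent (it is exactly $L_w=[-2,2]$), at the cost of invoking a little covering-space and Alexander-duality machinery that the paper avoids. Conversely, the paper's computation yields the explicit shape of $\Omega_w$ in each regime (circle, circle plus two real segments, arcs of a smooth cubic), which your argument only recovers through the optional quadratic-formula check you mention at the end.
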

\begin{proof} 
If $w=0$, $\Omega_w$ consists of the two disjoint half lines $(-\infty,-\frac1d]$, $[\frac1d,\infty)$.
 
 If $w\ne0$, the set $\Omega_w$ is closed and bounded. Moreover, setting $z=x+iy$ and $w=a+ib$,
 the equation  $\textrm{Im } \frac{1+wz^2}{z}=0$ becomes 
 \begin{equation}\label{cubic}
 (x^2+y^2)(ay+bx)-y=0.
 \end{equation}
 
  Let us first consider the case $b=0$. If  $a<0$, (\ref{cubic}) implies $y=0$, therefore $\Omega_w$ is bounded and contained in a line, thus does not disconnect the plane.
  
  If $a>0$, $\Omega_w$ is determined by 
  \begin{align}
 & (a(x^2+y^2)-1)y=0, \label{cubicBis}\\
 &  |x+ax(x^2+y^2)|\leq d(x^2+y^2).\label{condition}
  \end{align}
  
  If $a>\frac{d^2}4$, condition (\ref{condition}) is incompatible with $y=0$, while 
condition (\ref{condition}) and $a(x^2+y^2)-1=0$ give $2|x|\leq \frac{d}{a}$, namely only an upper and a lower portion of the circle $x^2+y^2=\frac1a$ remain, thus the plane is not disconnected.
 
 A simple calculation shows that, when  $0<w\leq\frac{d^2}4$, $\Omega_w$ as a shape similar to $\Omega_q$ in Figure \ref{fig:Omega}.

 
 Let now $b\ne0$. We want to show that the cubic in (\ref{cubic}) is a simple curve, namely is non-degenerate and has no singular points, see  Figure \ref{fig:Curva}. 
 
       \begin{figure}[ht]
    \centering
    \psfig{file=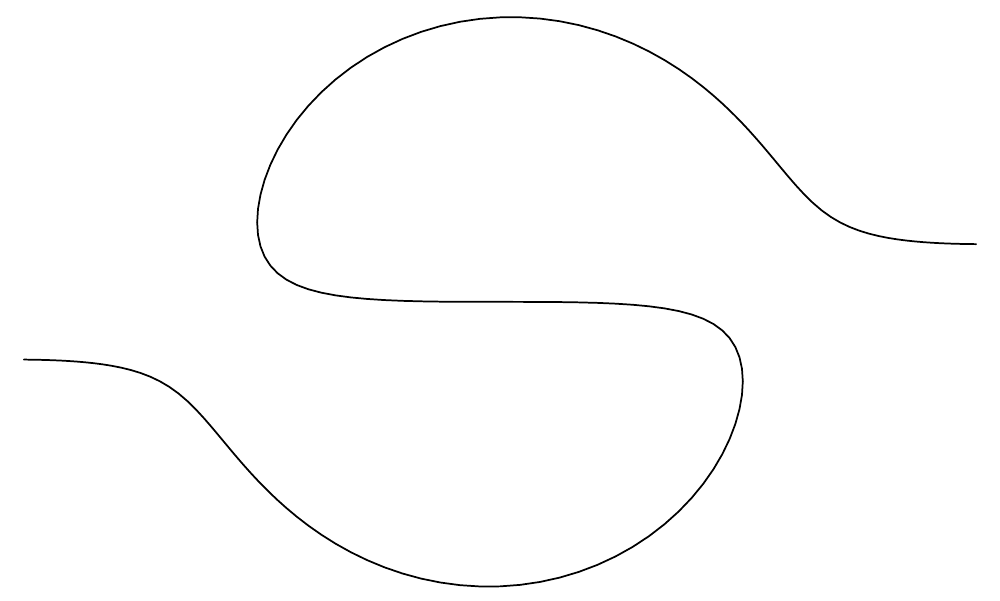,height=1.5in}
    \caption{The cubic containing $\Omega_w$ for $\textrm{Im } w\neq 0$}
    \label{fig:Curva}
      \end{figure}

Up to a rotation, the cubic can be rewritten as
 \[
 (a^2+b^2)(x^2+y^2)y-ay+bx=0.
 \]
 The condition for critical points gives the system
 \[
 \begin{cases}
 (a^2+b^2)(x^2+y^2)y=ay-bx\\
 (a^2+b^2)(x^2+3y^2)=a\\
 2 (a^2+b^2)xy=-b.
 \end{cases}
 \]
 The first two equations give $2(a^2+b^2)y^4=bxy$, which is incompatible with the third equation, namely the cubic curve is simple. Since only a finite portion of the cubic has to be considered, because $\Omega_w$ is bounded, again the plane is not disconnected by $\Omega_w$.
\end{proof}

 \begin{Lemma}\label{lem:connected}
 The set $\Omega$ does not disconnect $\bc^2$. The function $$(z,u) \mapsto \Det_{\t}((1+(1-u)(q+u)z^2)I-zA)$$ is a non-vanishing analytic function on  $\cv=\Omega^c$.
  \end{Lemma}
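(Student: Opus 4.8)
\textbf{Proof plan for Lemma \ref{lem:connected}.}
The plan is to deduce both assertions from Lemma \ref{OmegaShaped} by slicing $\Omega$ along the $u$-coordinate. For fixed $u\in\bc$, write $w=w(u):=(1-u)(q+u)$; then the slice $\Omega_u:=\{z\in\bc:(z,u)\in\Omega\}$ is exactly the set $\Omega_{w(u)}=\{z:\frac{1+w(u)z^2}{z}\in[-d,d]\}$ studied in Lemma \ref{OmegaShaped}, because $g(z,u)=\frac{1+w(u)z^2}{z}$. By that lemma, $\Omega_u$ disconnects the plane only when $w(u)$ is real with $0<w(u)\le d^2/4$; for all other $u$ the slice $\Omega_u$ is either a bounded set contained in a simple (non-degenerate) curve, or (when $w(u)=0$) a pair of disjoint rays, and in either case its complement in $\bc$ is connected, indeed contains a neighborhood of $\infty$ and a neighborhood of $0$ (note $0\notin\Omega_u$ since $g(0,u)=\infty$).

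First I would establish that $\Omega^c$ is path-connected. Given two points $(z_0,u_0),(z_1,u_1)\in\Omega^c$: since $\{u:w(u)\in\br,\ 0<w(u)\le d^2/4\}$ is a one-dimensional real-analytic subset of $\bc\cong\br^2$ (the preimage under the real-analytic map $u\mapsto w(u)$ of a segment), its complement $G\subset\bc$ is open, dense and connected, so I can move $u_0$ and $u_1$ within $G$ along paths, lifting them to paths in $\Omega^c$ by keeping $z$ near $\infty$ (for each $u$ in such a path, the unbounded component of $\Omega_u^c$ is available, and these fit together continuously since $\Omega$ is closed). This reduces the problem to connecting, for a single generic value $u=u_*$ with connected slice-complement, the two points $z_0$ and $z_1$ inside $\Omega_{u_*}^c$, which is immediate. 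A small amount of care is needed to handle the starting and ending values $u_0,u_1$ if they happen to lie on the exceptional set, but there one simply takes a first short segment in the $z$-direction out to large $|z|$, which stays in $\Omega^c$ because $\Omega_{u_0}$ is bounded (as $w(u_0)\ne 0$; the case $w(u_0)=0$, i.e. $u_0\in\{1,-q\}$, is handled by going to large $|z|$ as well since the two rays are bounded away from most of the plane). This proves the first assertion.

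For the second assertion, recall that the determinant formula (Theorem \ref{Thm:detForm}), specialized to the essentially $(q+1)$-regular case and rewritten, identifies $\Det_\t((1+(1-u)(q+u)z^2)I-zA)$, up to the explicit nonvanishing scalar factors appearing in $\xi_X$, with $\xi_X(z,u)^{-1}$ on the polydisc $\{|z|<1/\alpha,\ u\in\cu\}$; so $D(z,u):=\Det_\t((1+w(u)z^2)I-zA)$ is analytic there and, more importantly, it is defined and analytic on all of $\cv=\Omega^c$: the operator $(1+w(u)z^2)I-zA$ has spectrum $\{1+w(u)z^2 - z\lambda:\lambda\in\sigma(A)\}\subset\{1+w(u)z^2-zt:t\in[-d,d]\}$ (using $\sigma(A)\subset[-\|A\|,\|A\|]\subset[-d,d]$), and this set omits $0$ precisely when $g(z,u)=\frac{1+w(u)z^2}{z}\notin[-d,d]$ and $z\ne 0$ — while for $z=0$ the operator is $I$, which is invertible — so the argument lies in $\ca_0$ throughout $\cv$, whence $\Det_\t$ is analytic there by Corollary \ref{cor:det.analytic}. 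It is non-vanishing because $\Det_\t$ never vanishes on $\ca_0$ (it is an exponential). Finally, to see this $\cv$-defined analytic function really is an \emph{extension} of the one from the determinant formula — i.e. that $\cv$ is the correct domain — one just notes that the polydisc $\{|z|<1/\alpha,u\in\cu\}$ meets $\cv$ in a connected open set on which the two agree, and $\cv$ is connected by the first part, so they agree on $\cv$ by analytic continuation.

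\textbf{Main obstacle.} The routine computations (spectral inclusion, $0\notin\ca_0$ criterion) are straightforward; the delicate point is the connectivity argument — specifically, verifying that one can continuously choose, over the connected open set $G$ of "good" $u$-values, a base point in the unbounded component of $\Omega_u^c$ and that the exceptional real-analytic curve of "bad" $u$'s genuinely has connected complement and can be crossed without losing connectivity of $\Omega^c$. I expect the bulk of the real work to be a clean bookkeeping of these slices, using that $\Omega$ is closed and that for every $u$ the complement $\Omega_u^c$ contains all sufficiently large $|z|$ with a uniform-on-compacts bound.
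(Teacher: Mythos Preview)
Your approach matches the paper's: slice $\Omega$ over $u$, invoke Lemma~\ref{OmegaShaped} on each slice, use that the bad set $T=\{u:w(u)\in(0,d^2/4]\}$ is one-dimensional, and for the second assertion check that the operator is invertible on $\cv$ so that $\Det_\t$ is defined, analytic by Corollary~\ref{cor:det.analytic}, and nonvanishing as an exponential. The only difference is the anchor for connectivity: the paper uses the spine $\{0\}\times\bc\subset\cv$ (which you yourself noted is always available) rather than large $|z|$, and this dissolves your ``main obstacle'' entirely --- for $u\notin T$ connect $(z,u)$ to $(0,u)$ within the connected slice, and for $u\in T$ perturb $u$ using only that $T$ has empty interior and $\cv$ is open --- so there is no need to prove $G$ connected, to build a continuous section near $\infty$, or to treat $u\in\{1,-q\}$ separately.
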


\begin{proof}
We first observe that the plane $\{0\}\times \bc$ is contained in $\cv$. Set now $T=\{v\in\bc:(1-v)(q+v)\in\br, 0<(1-v)(q+v)\leq\frac{d^2}4\}$.
If $u\not\in T$, any $ z\in\bc:(z,u)\in\cv$ is connected to the point $(0,u)$  by the preceding Lemma. Since $\cv$ is open and $T$ is 1-dimensional, 
for any $ z\in\bc:(z,u)\in\cv$ there exists a ball centered in $(z,u)$ still contained in $\cv$, and such a ball contains a $(z,u')$ with $u'\not\in T$. This proves that $\cv$ is connected. We now prove the second statement. For $(z,u)\in\cv$, the operator $(1+(1-u)(q+u)z^2)I-zA$ is invertible. Indeed this is clearly true for $z=0$ and, for $z\neq0$, it may be written as $\displaystyle -z\left(A-g(z,u)I\right)$, which is invertible since the spectrum of $A$ is contained in the interval $[-d,d]$ of the real line (see \cite{Mohar}, \cite{MoWo}), and the condition $(z,u)\in\cv$ means $g(z,u)\not\in[-d,d]$. Such invertibility implies that the determinant is defined and invertible. Analyticity follows from Corollary \ref{cor:det.analytic}.
\end{proof}

 \begin{proof}[Proof of Theorem \ref{FunctEq}]
      The determinant formula gives
\begin{align*}
 Z_{X}(z,u) &= \frac{(1-(1-u)^2z^2)^{-(q-1)/2}}
    {\Det_{\t}((1+(1-u)(q+u)z^2)I-zA)},\\
\xi_{X}(z,u)&=\frac{(1-(q+1)z+(1-u)(q+u)z^2)}
    {\Det_{\t}((1+(1-u)(q+u)z^2)I-zA)} .
\end{align*}
Now the first statement is a consequence of Lemma \ref{lem:connected}. As for the other two statements, 
we have
\begin{align*}
\xi_{X}(z,u)&=\big(g(z,u)-(q+1)\big)\big( \Det_{\t}(g(z,u)I-A)\big)^{-1},\\
 \Omega&=\{(z,u)\in\bc^2:g(z,u) \in[-d,d]\},
\end{align*}
hence the the results follow by the equality $g\circ\psi=g$.
	
 \end{proof}


\subsection{Functional equations for the Ihara zeta function on infinite graphs}

As discussed in the introduction, the possibility of proving functional equations for the Ihara zeta function on regular infinite graphs, relies on the possibility of extending $Z_{X}$ to a domain $\cu$ which is invariant under the transformation $z\to\frac1{qz}$, and then to check the invariance properties under the mentioned transformation. 

Functional equation may fail for two reasons. The first is that the spectrum of the adjacency operator $A$ may consist of the whole interval $[-d,d]$, and that $Z_{X}$ may be singular in all points of the curve $\O_q$ in Figure \ref{fig:Omega}, so that no analytical extension is possible outside $\O_q$.
The second is more subtle, and was noticed by B. Clair in \cite{Clair}. In one of his examples,  which we describe below in Example \ref{ClairExample}, the completion $\xi_X$ analytically extends to the whole complex plane, but the points $\{1,-1\}$ are ramification points for $Z_{X}$, which lives naturally on a double cover of $\bc$. Then the functional equation makes sense only on the double cover, interchanging the two copies, so that it is false on $\bc$.

\subsubsection{Criteria for analytic extension}

We first make a simple observation.

\begin{Prop}\label{HoleInTheSpectrum}
Let $X$ be an infinite $(q+1)$-regular graph as above. Denote by  $E(\l)$ the spectral family of the adjacency operator $A$, and set $F(\l)=\t(E(\l))$. If $F(\l)$ is constant in a neighborhood of a point $x\in(-2\sqrt q,2\sqrt q)$, then $Z_X$ extends analytically to a domain $\cu$ which is invariant under the transformation $z\to\frac1{qz}$, and a suitable completion of such extension satisfies the functional equation.
\end{Prop}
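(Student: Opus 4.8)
The plan is to push the determinant formula past the barrier $\Omega_q$ and to use the spectral gap produced by the hypothesis as a channel through which the two sides of $\Omega_q$ can be joined into a single $\psi$-invariant domain of analyticity. \emph{Step 1 (reduction to the determinant formula).} Specializing Theorem~\ref{Thm:detForm} to the $(q+1)$-regular case with $u=0$, exactly as in the proof of Theorem~\ref{FunctEq}, I would write, for $z$ near $0$,
\[
\xi_{X}(z)=\big(g(z)-(q+1)\big)\,\big(\Det_{\t}(g(z)I-A)\big)^{-1},\qquad g(z):=\frac{1+qz^{2}}{z},
\]
the relevant completion being $\xi_X(\cdot,0)$. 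Since $\Det_\t$ never vanishes and, by Corollary~\ref{cor:det.analytic}, is analytic on $\ca_0$, the right-hand side is analytic on $\{z:g(z)\notin[\min\s(A),\max\s(A)]\}$; the whole difficulty is that this set is disconnected. Indeed, by Lemma~\ref{OmegaShaped} with $w=q$ (here $d=q+1$ and $0<q\le(q+1)^2/4$), $\Omega_q$ disconnects $\bc$ into the bounded component $\cv_-$, which contains the disc of convergence $\{|z|<1/\a\}$, and the unbounded component $\cv_+=\psi(\cv_-)$, where $\psi(z)=\frac1{qz}$. Because $g\circ\psi=g$, the functional equation is automatic on \emph{any} connected $\psi$-invariant domain of analyticity, so the whole task is to build one.

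\emph{Step 2 (opening a channel in $\Omega_q$).} Write $\mu$ for the spectral measure $\lambda\mapsto dF(\lambda)$ of $A$. The hypothesis says precisely that $\mu$ gives no mass to a neighbourhood $(x-\d,x+\d)$ of $x$, which we may shrink so that $(x-\d,x+\d)\subset(-2\sqrt q,2\sqrt q)$. The preimage $g^{-1}\big((x-\d,x+\d)\big)$ is a pair of conjugate open arcs $\gamma,\overline\gamma$ of the circle $\{|z|=1/\sqrt q\}\subset\Omega_q$; since $\psi$ restricted to that circle is complex conjugation and $\s(A)\subset\br$, the set $\gamma\cup\overline\gamma$ is $\psi$-invariant. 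I would choose a $\psi$-invariant, conjugation-symmetric open neighbourhood $W$ of $\gamma\cup\overline\gamma$ with $\overline{g(W)}\cap\supp\mu=\emptyset$, and set $\cu:=\cv_-\cup W\cup\cv_+$; a short verification then shows that $\cu$ is open, connected, $\psi$-invariant, and contains $\{|z|<1/\a\}$.

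\emph{Step 3 (analytic continuation across the channel).} On $W$ one has $\log\Det_\t(g(z)I-A)=\int_{\supp\mu}\Log(g(z)-\lambda)\,d\mu(\lambda)$ up to an additive constant, with logarithmic derivative $\big(\int\frac{d\mu(\lambda)}{g(z)-\lambda}\big)g'(z)$, which is manifestly holomorphic on $W$ because $\supp\mu$ stays away from $g(W)$ (cf.\ Lemma~\ref{lem:Lemma3}). Integrating along paths in the simply connected components of $W$ and exponentiating yields the analytic, nowhere-vanishing continuation of $z\mapsto\Det_\t(g(z)I-A)$ from $\cv_-\cap W$ to $W$, hence the analytic extension of $\xi_X$ to $\cv_-\cup W$ and then, continuing through $W$, to all of $\cu$. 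On $\cu$ the extension still equals $\big(g-(q+1)\big)\big(\Det_\t(gI-A)\big)^{-1}$, so $g\circ\psi=g$ gives $\xi_X=\xi_X\circ\psi$; dividing out the (nowhere-zero, single-valued on $\cu$) factor $(1-z^2)^{(q-1)/2}(1-(q+1)z+qz^2)$ recovers the extension of $Z_X$ to $\cu$.

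\emph{The main obstacle.} The delicate point is single-valuedness of the continuation in Step~3. Transporting $\log\Det_\t(\zeta I-A)$ from $\cv_-$ through the gap and back around a component $S$ of $\supp\mu$ adjacent to the gap picks up $2\pi i\,\mu(S)=2\pi i\,\t(E(S))$; comparing the continuations through $\gamma$ and through $\overline\gamma$ one finds that the value carried to $\cv_+$ agrees with the naive determinant-formula branch there only up to a power of $e^{2\pi iF(x)}$. When $F(x)\in\{0,1\}$ this factor is trivial, $\cu$ is a genuine planar domain, and the argument above is complete; in general $\cu$ must be realized as a domain on the branched covering of $\bc$ obtained by unfolding this monodromy — in the spirit of Clair's covering-space solution discussed in the introduction — and it is on that covering that a ``suitable completion'' of the extension carries the functional equation. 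I would therefore structure the write-up so as to isolate this monodromy computation, treating first the clean case $F\equiv0$ or $F\equiv1$ on the gap and passing to the covering only afterwards.
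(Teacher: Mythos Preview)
Your Steps~1--3 reproduce the paper's argument almost verbatim: write $\xi_X(z)=\big(g(z)-(q+1)\big)\big(\Det_\t(g(z)I-A)\big)^{-1}$ via the determinant formula, observe that the spectral gap at $x$ removes the conjugate arcs $\gamma,\bar\gamma$ from $\Omega_q$ so that the singularity set no longer disconnects the plane, take $\cu$ to be the complement of that set, and read off the functional equation from $g\circ\psi=g$. The paper's proof is just this, stated in a few lines.

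Where you diverge is in your ``main obstacle''. The paper does \emph{not} address monodromy at all: it simply asserts that the spectral--integral formula $\exp\int_{\s(A)}\log(1+qz^2-\l z)\,dF(\l)$ furnishes an analytic extension of $\Det_\t$ to all of $\cu$, and stops there. Your observation that $\cu=\bc\setminus(S_1\cup S_2)$ is not simply connected, and that transporting the logarithm around one of the components $S_i$ multiplies the determinant by $\exp\big(2\pi i\cdot 2\,\mu(\Sigma_i)\big)$ (note the extra factor of $2$: a simple loop around $S_i$ in the $z$--plane is pushed by the degree--two map $g$ to a loop winding \emph{twice} about the corresponding spectral component $\Sigma_i$, so the relevant exponent is $e^{4\pi i F(x)}$ rather than $e^{2\pi i F(x)}$), is a genuine subtlety that the paper's short proof glosses over. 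Your instinct to isolate the clean case $F(x)\in\{0,1\}$ --- where one $S_i$ is empty and $\cu$ is genuinely simply connected --- is sound; your proposed passage to a branched covering in the general case is reasonable but goes well beyond what the paper attempts. In the paper's narrative this Proposition is a quick observation preceding the sharper analyticity criterion and the Bartholdi--based extension, and its proof is correspondingly informal; you have been more scrupulous than the source.
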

\begin{proof}
Let $z_\pm$ be the two solutions of the equation $1+qz^2-xz=0$. Since $dF(\lambda)$ vanishes in a neighborhood of $x$, the function
\begin{align*}
\Det_{\t}(&(1+qz^2)I-zA)=\exp\int_{\s(A)}\log(1+qz^2-\l z)\ dF(\l)  
\end{align*}
is analytic in a neighborhood of the points $z_\pm$, hence the singularity region $\{z\in\bc: 1-\l z + qz^2=0, \l\in\sigma(A) \}$ does not disconnect the plane. Its complement $\cu$ is therefore connected and invariant under the transformation $z\to\frac1{qz}$. Consider now the completion
\begin{align*}
\xi_{X}(z) 
&= (1-z^2)^{(q-1)/2} (1-(q+1)z+qz^2) Z_{X}(z)\,.
\end{align*}
The determinant formula in Theorem \ref{Thm:detForm} gives, for $|z|<\frac1q$, $z\neq0$,
\begin{align*}
\xi_{X}(z) 
&=  \big(1-(q+1)z+qz^2\big)  \big(\Det_{\t}((1+qz^2)I-zA)\big)^{-1}\\
&=  \Big(\frac1z+qz-(q+1)\Big)  \Big(\Det_{\t} \Big( (\frac1z+qz)I-A \Big)\Big)^{-1}\\
&=  \Big(\frac1z+qz-(q+1)\Big)\exp\left(-\int_{\s(A)}\log\Big((\frac1z+qz)-\l \Big)\ dF(\l)\right).
\end{align*}
As explained above, $\xi_X$ analytically extends to the region $\cu$, where the functional equation  follows by the invariance of the expression $(\frac1z+qz)$ under the transformation $z\to\frac1{qz}$.
\end{proof}


The following criterion is valid also when there are no holes in the spectrum. A regularity assumption on the spectral measure of $A$ will guarantee that the behaviour of $Z_{X}$ on the critical curve $\O_q$ is not too singular, allowing analytic continuation outside $\O_q$ and the validity of a functional equation for suitable completions.

%
With the notation above,
\begin{align*}
\Det_{\t}(&(1+qz^2)I-zA)=\exp\int_{\s(A)}\log(1+qz^2-\l z)\ dF(\l)  \\
&=(1- (q+1)z+qz^2)\cdot\exp \int_{-d}^{d}\frac{z}{1+qz^2-\l z}F(\l)\ d\l\   ,
\end{align*}
where we used integration by parts and the fact that $\s(A)\subseteq[-d,d]$, hence
\begin{equation}
\xi_{X}(z) = \exp
\left(  - \int_{-d}^d\frac{z}{1+qz^2-\l z}F(\l)\ d\l   \right).
\end{equation}

\begin{Thm}[Functional equation]
Assume there exist $\eps>0$, $\s,\t\in\{-1,1\}$, and a function $\f$ such that 
$\f$ is analytic in $\{\s \Im z>0,|z-2\t\sqrt{q}|<\eps\}$, and $F$ is the boundary value of $\f$ 
on $[2\t\sqrt{q}-\eps,2\t\sqrt{q}+\eps]$,
Then, there exists a connected domain $\cu$ containing $\{|z|<1/q\}$ such that
\begin{itemize}
\item[$(i)$] $\xi_{X}$ extends analytically to $\cu$,
\item[$(ii)$] $\cu\setminus\{0\}$ is invariant under the transformation $z\to\frac1{qz}$,
\item[$(iii)$] the function $\xi_{X}$, extended as above, verifies
$$\xi_{X}(\frac1{qz})=\xi_{X}(z),\quad z\in\cu\setminus\{0\}.$$
\end{itemize}
\end{Thm}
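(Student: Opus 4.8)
The plan is to trade $z$ for $w=g(z):=\tfrac1z+qz$ and to recognise $\xi_X$ as the exponential of a Cauchy--Stieltjes transform. Since $\frac{z}{1+qz^2-\l z}=\frac1{g(z)-\l}$, the formula for $\xi_X$ obtained just before the statement reads $\xi_X(z)=\exp\bigl(-H(g(z))\bigr)$ with $H(w):=\int_{-d}^{d}\frac{F(\l)}{w-\l}\,d\l$, and $H$ is holomorphic on $\bc\setminus[-d,d]$, vanishing at $\infty$. Hence $\xi_X$ is holomorphic on $\Omega_q^{\,c}$, where $\Omega_q:=g^{-1}([-d,d])$ is the curve of Figure~\ref{fig:Omega}; it disconnects $\bc$ by Lemma~\ref{OmegaShaped} (parameter $q\le d^2/4$). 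Write $D_{\mathrm{in}}$ for the bounded component (it contains $\{|z|<1/q\}\ni0$) and $D_{\mathrm{out}}$ for the unbounded one; since $g(D_{\mathrm{in}}),g(D_{\mathrm{out}})\subseteq\bc\setminus[-d,d]$, $\xi_X=\exp(-H\circ g)$ is single‑valued and holomorphic on each. The involution $\iota(z)=\tfrac1{qz}$ satisfies $g\circ\iota=g$, fixes the critical points $\pm\tfrac1{\sqrt q}$ of $g$, and exchanges $D_{\mathrm{in}}\leftrightarrow D_{\mathrm{out}}$; so $\xi_X\circ\iota=\xi_X$ wherever both sides are defined, and the functional equation will come for free once a suitable $\cu$ is built. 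The task is to manufacture a \emph{connected} $\iota$‑invariant domain $\cu\supseteq\{|z|<1/q\}$ on which $\exp(-H\circ g)$ is single‑valued holomorphic.

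Next I would continue $H$ across a piece of the cut near $w_0:=2\t\sqrt q\in(-d,d)$. As $F$ is real‑valued and equals $\f$ on $[w_0-\eps,w_0+\eps]$ with $\f$ holomorphic on a half‑disc, Schwarz reflection gives $\Phi$ holomorphic on a full disc $U$ about $w_0$, real on $U\cap\br$, with $\Phi|_\br=F$. Splitting $H=\int_{|\l-w_0|\le\eps}+\int_{\text{rest}}$, the second term is already holomorphic near $w_0$, while for the first the Sokhotski--Plemelj formulas give boundary values differing by $-2\p i F=-2\p i\Phi$. Consequently
\[
G:=\begin{cases}H+i\p\Phi,&\Im w>0,\\ H-i\p\Phi,&\Im w<0,\end{cases}
\]
glues, across the diameter (where both sides equal a principal‑value integral), to a holomorphic function on $U$; equivalently $H=G-i\p\Phi$ on $\{\Im w>0\}$ and $H=G+i\p\Phi$ on $\{\Im w<0\}$.

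Now pull this back through $g$. At $z_0:=\t/\sqrt q$ one has $g(z_0)=w_0$, $g'(z_0)=0$, $g''(z_0)\ne0$, so $z_0$ is a simple branch point of $g$ lying over the critical value $w_0$, and $g$ maps a neighbourhood $U_0$ of $z_0$ into $U$. Hence $\Xi:=\exp\bigl(-G\circ g\bigr)\exp\bigl(i\p\,\Phi\circ g\bigr)$ is single‑valued holomorphic on $U_0$, and on $U_0\cap\{\Im g>0\}$ it equals $\exp(-H\circ g)=\xi_X$. The point of the branching is that $g^{-1}(U\cap\{\Im w>0\})$ meets \emph{both} $D_{\mathrm{in}}$ and $D_{\mathrm{out}}$ in every neighbourhood of $z_0$, so $\Xi$ agrees with $\xi_X|_{D_{\mathrm{in}}}$ on $D_{\mathrm{in}}\cap U_0\cap\{\Im g>0\}$ and with $\xi_X|_{D_{\mathrm{out}}}$ on $D_{\mathrm{out}}\cap U_0\cap\{\Im g>0\}$. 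Choosing $U_0$ small and $\iota$‑invariant (possible since $\iota(z_0)=z_0$), I would set
\[
\cu:=\{|z|<\tfrac1q\}\ \cup\ \iota\bigl(\{|z|<\tfrac1q\}\bigr)\ \cup\ \bigl(D_{\mathrm{in}}\cap\{\Im g>0\}\bigr)\ \cup\ \bigl(D_{\mathrm{out}}\cap\{\Im g>0\}\bigr)\ \cup\ U_0,
\]
with $\xi_X$ equal to $\exp(-H\circ g)$ on the first four pieces and to $\Xi$ on $U_0$. On each nonempty overlap of $U_0$ with another piece one has $\Im g>0$, so the formulas agree and $\xi_X$ is well defined and holomorphic on $\cu$. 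Using that $D_{\mathrm{in}}\cap\{\Im g>0\}=D_{\mathrm{in}}\cap\{\Im z<0\}$ is a connected half‑disc touching both $\{|z|<1/q\}$ and $U_0$, and symmetrically $D_{\mathrm{out}}\cap\{\Im g>0\}=D_{\mathrm{out}}\cap\{\Im z>0\}$ touches $U_0$ and $\iota(\{|z|<1/q\})=\{|z|>1\}$, one checks $\cu$ is connected; it contains $\{|z|<1/q\}$, giving (i). It is visibly $\iota$‑invariant ($\iota$ permutes the five pieces, swapping the first two, swapping the next two, fixing $U_0$), giving (ii). Both formulas for $\xi_X$ factor through $g$ and $g\circ\iota=g$, so $\xi_X\circ\iota=\xi_X$ on $\cu\setminus\{0\}$, which is (iii).

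The main obstacle, and the reason for the slightly unnatural shape of $\cu$, is the monodromy underlying Clair's example: $\xi_X|_{D_{\mathrm{in}}}$ does \emph{not} extend holomorphically across $\Omega_q$, since on the two sides of the critical circle near $z_0$ it is represented by the distinct germs $\exp(-G\circ g)e^{\pm i\p\Phi\circ g}$, and the same failure occurs at every other point of the curve. Only over the critical value $2\t\sqrt q$ does the ramification of $g$ let a single holomorphic germ agree with $\xi_X$ simultaneously on a piece of $D_{\mathrm{in}}$ and a piece of $D_{\mathrm{out}}$; this is exactly why the hypothesis must be imposed there, and why $\cu$ has to be assembled from the $\{\Im g>0\}$‑halves of the two components, tunnelled through $z_0$. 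Verifying the connectedness of $\cu$ and the consistency of the two local descriptions of $\xi_X$ is thus the real content; the functional equation is a formality once $\cu$ is in place.
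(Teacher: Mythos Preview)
Your argument is correct, but the paper reaches the same conclusion by a far more economical route. Instead of continuing the Cauchy--Stieltjes transform $H$ across the cut via Sokhotski--Plemelj and then patching five pieces of the $z$-plane, the paper simply deforms the \emph{contour of integration} in the $\lambda$-variable: it replaces the segment $[-d,d]$ by a path $\Gamma$ that detours around $2\tau\sqrt q$ along the half-circle $\{\sigma\,\Im\lambda\ge0,\ |\lambda-2\tau\sqrt q|=\eps\}$, using the analyticity of $\varphi$ to justify the deformation for $|z|<1/q$. The resulting identity
\[
\xi_X(z)=\exp\Bigl(-\int_{\Gamma}\frac{z}{1+qz^2-\lambda z}\,\varphi(\lambda)\,d\lambda\Bigr)
\]
is then manifestly holomorphic on the complement of $\widetilde\Omega_q:=g^{-1}(\Gamma)$, a single curve that no longer disconnects the plane; invariance under $z\mapsto 1/(qz)$ is immediate because $g$ is invariant. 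Thus the paper obtains a connected, $\iota$-invariant domain $\cu=\widetilde\Omega_q^{\,c}$ in one stroke.

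What your approach buys is a more explicit description of the monodromy: your factorisation $\exp(-H\circ g)=\exp(-G\circ g)\,e^{\pm i\pi\,\Phi\circ g}$ makes transparent exactly why the two germs on $D_{\mathrm{in}}$ and $D_{\mathrm{out}}$ differ by $e^{2\pi i\,\Phi\circ g}$ and why only at the ramification point $z_0=\tau/\sqrt q$ can they be bridged by a single holomorphic germ. The cost is the bookkeeping: you must verify openness, connectedness, and pairwise consistency of five overlapping sets, and your domain $\cu$ is visibly smaller and less symmetric than $\widetilde\Omega_q^{\,c}$. The paper's contour-deformation trick hides all of this inside a single formula. The two arguments are morally the same (your Plemelj correction $\pm i\pi\Phi$ is exactly the residue picked up when pushing the $\lambda$-contour across the pole at $\lambda=g(z)$), but the contour version is the cleaner packaging.
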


\begin{proof}
We give the proof for $\s=\t=1$, the other cases being analogous. Let $\G$ be the oriented curve in $\bc$ made of the segment $[-d,2\sqrt{q}-\eps]$, the upper semicircle $\{\Im \lambda\geq0,|\lambda-2\sqrt{q}|=\eps\}$, and the segment $[2\sqrt{q}+\eps,d]$. It is not restrictive to assume that $\f$ has a continuous extension to the upper semicircle. We have
$$
\int_{-d}^{d}\frac{z}{1+qz^2-\l z}F(\l)\ d\l - \int_{\G}\frac{z}{1+qz^2-\l z}\f(\l)\ d\l
=\int_{S}\frac{z}{1+qz^2-\l z}\f(\l)\ d\l,
$$
if $S$ is the contour of the semi-disc $D=\{\Im\lambda>0,|\lambda-2\sqrt{q}|<\eps\}$. If $|z|<1/q$ and $\eps$ is small enough, $1+qz^2-\l z$ does not vanish for $\l\in D$, hence the last integrand is analytic, and the contour integral vanishes. As a consequence, for $|z|<1/q$,
\begin{equation}\label{ZetaBis}
\xi_{X}(z) = \exp
\left(  - \int_{\G}\frac{z}{1+qz^2-\l z}\f(\l)\ d\l   \right).
\end{equation}
We now observe that the singularities of the integral are contained in the set $\widetilde\Omega_q:=\{z\in\bc:\frac1z+qz\in\G\}$
shown in Fig. \ref{fig:OmegaBis}, which does not disconnect the plane, and property $(i)$ follows. 

\begin{figure}[ht]
\centering
\psfig{file=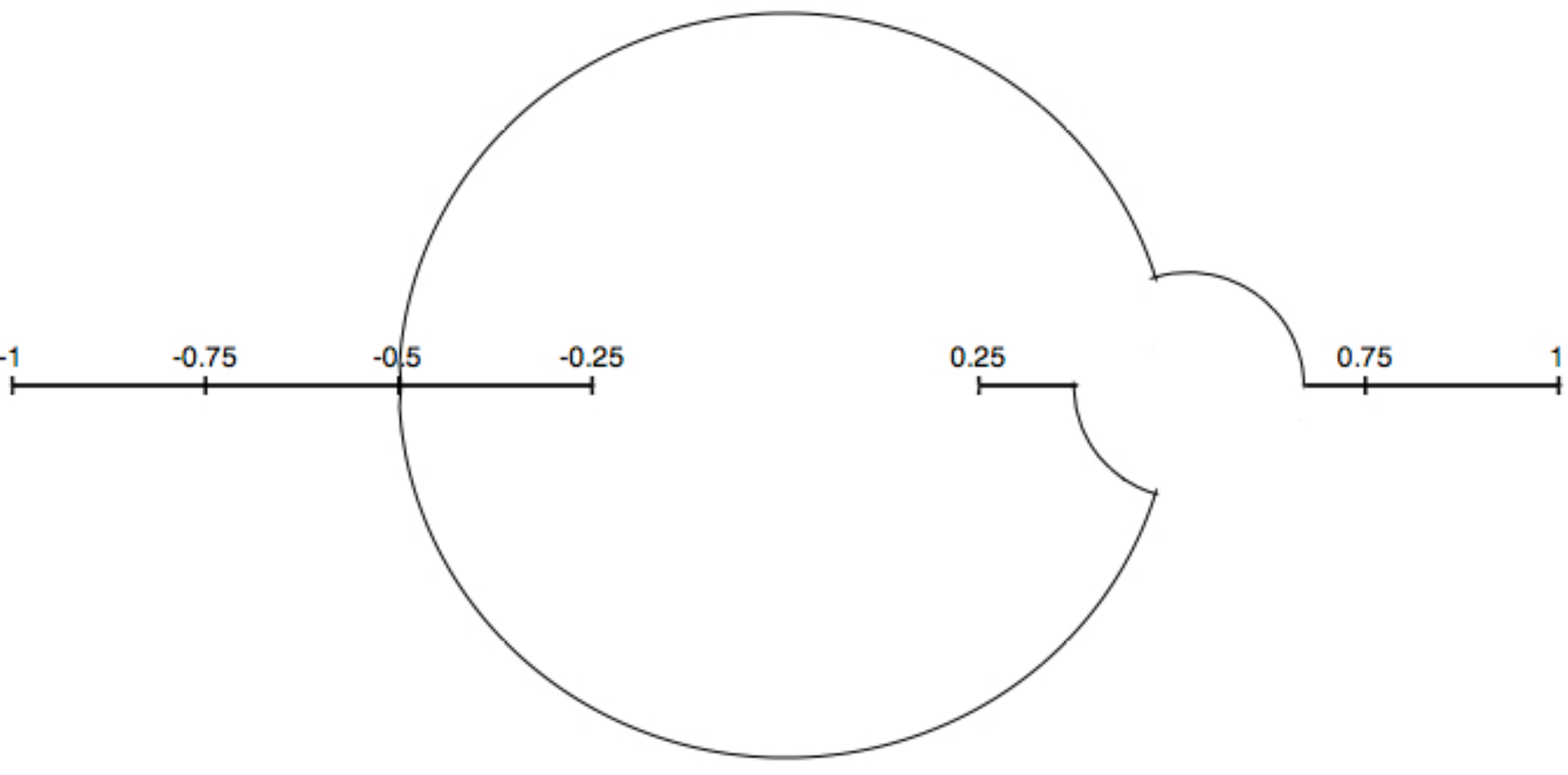,height=2.5in}
\caption{The set $\widetilde\Omega_q$ for $q=4$, $\eps=0.2$}
\label{fig:OmegaBis}
\end{figure}

\noindent
The definition of $\widetilde\Omega_q$ guarantees its invariance under the transformation $z\to\frac1{qz}$, i.e. $(ii)$ is proved. Property $(iii)$ follows directly from  equation (\ref{ZetaBis}).
\end{proof}

\begin{rem}
Let us notice that, when $q=1$, the criterion above is useless. Indeed, since $F(\lambda)$ is constant before $-2$ and after $2$, analyticity implies it should be constant in a neighborhood of either  $-2$ or $2$, namely already Proposition \ref{HoleInTheSpectrum} applies. In particular, the results above do not apply to the Example  \ref{ClairExample}.
\end{rem}

\subsubsection{An extension via Bartholdi zeta function}

The extension we discuss here is again based on analytic extension, but in the sense of two-variable functions. Moreover, it does not require either holes in the spectrum or regularity assumptions on the function $F(\l)$.

As shown above, the Ihara zeta function coincides with the Bartholdi zeta function for $u=0$, $|z|<1/q$, and the latter has a unique analytic extension to the set $\Omega^c$. We may therefore extend the Ihara zeta function via
\begin{equation}\label{ZetaTer}
Z_{X}(z):=Z_{X}(z,0),\qquad (z,0)\in\Omega^c,
\end{equation}
where the Bartholdi zeta function has been extended to $\Omega^c$. Let us remark that $\{z\in\bc:(z,0)\in\Omega^c\}=\Omega_q^c$, cf. Fig. \ref{fig:Omega}.

The following result follows directly by Theorem \ref{FunctEq}.

\begin{Cor}
Assume $X$ is an infinite graph $($either periodic or self-similar$)$, which is essentially $(q+1)$-regular. 
Then, the domain $\Omega_q^c$ contains $\{|z|<1/q\}$ and is invariant under the transformation $z\to\frac1{qz}$. Moreover, setting
\begin{align*}
\xi_{X}(z) 
&= (1-z^2)^{(q-1)/2} (1-(q+1)z+qz^2) Z_{X}(z) .
\end{align*}
where $Z_{X}$ is extended to $\Omega_q^c$ as above, we have
 $\xi(z)=\xi(\frac1{qz})$, for any $z\in\Omega_q^c$.
\end{Cor}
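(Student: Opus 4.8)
The plan is to obtain this corollary as a direct specialization of Theorem \ref{FunctEq} at $u=0$, observing that all three clauses of that theorem descend to the slice $\{(z,0):z\in\bc\}$. First I would record the translation dictionary between the two-variable and one-variable objects. At $u=0$ we have $(1-u)(q+u)=q$ and $(1-u)^2=1$, so the set $\Omega$ of Theorem \ref{FunctEq} intersected with the line $u=0$ is exactly $\{z:g(z,0)\in[-d,d]\}$ with $g(z,0)=\frac1z+qz$; by Lemma \ref{OmegaShaped} (with $w=q$, which lies in $(0,d^2/4]$ since $d=q+1$) this is the curve $\Omega_q$ of Figure \ref{fig:Omega}, and its complement in the line is $\Omega_q^c$, which contains the disc $\{|z|<1/q\}$ by a direct check that $\frac1z+qz$ has modulus $>d$ there (or simply because the Euler product converges for $|z|<1/\alpha$ and $\alpha=d$ when $u=0$). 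Likewise the transformation $\psi$ of Theorem \ref{FunctEq} restricted to $u=0$ is $z\mapsto\frac1{qz}$, and the completion $\xi_X(z,u)$ at $u=0$ becomes precisely the stated $\xi_X(z)=(1-z^2)^{(q-1)/2}(1-(q+1)z+qz^2)Z_X(z)$.

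Next I would assemble the three assertions. For the domain statement: $\Omega_q^c=\{z:(z,0)\in\Omega^c\}$ contains $\{|z|<1/q\}$ as just noted, and it is invariant under $z\mapsto\frac1{qz}$ because $g(\frac1{qz},0)=q\cdot\frac1{qz}+\frac1z\cdot\frac1{(1/(qz))}$—more cleanly, because $g\circ\psi=g$ was established in the proof of Theorem \ref{FunctEq}, so $(z,0)\in\Omega^c\iff g(z,0)\notin[-d,d]\iff g(\psi(z,0))\notin[-d,d]\iff\psi(z,0)\in\Omega^c$, and $\psi(z,0)=(\frac1{qz},0)$. For the functional equation: by Theorem \ref{FunctEq}(1), $\xi_X(z,u)$ extends analytically to $\Omega^c$; in particular $z\mapsto\xi_X(z,0)$, which by \eqref{ZetaTer} is the definition of the extended $Z_X(z)$ up to the polynomial and root factors, is analytic on $\Omega_q^c$. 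By Theorem \ref{FunctEq}(3) the functional equation $\xi_X(z,u)=\xi_X\circ\psi(z,u)$ holds on $\cv_0=\{(z,u)\in\Omega^c:z\neq0,u\neq1,u\neq-q\}$; setting $u=0$ (which satisfies $0\neq1$ and $0\neq-q$ since $q\geq1$) gives $\xi_X(z)=\xi_X(\frac1{qz})$ for all $z\in\Omega_q^c\setminus\{0\}$.

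The one point requiring a word of care is $z=0$ versus $z\neq0$: Theorem \ref{FunctEq}(3) only asserts the identity on $\cv_0$, which excludes $z=0$, so strictly the corollary should read "for any $z\in\Omega_q^c\setminus\{0\}$" — and indeed the transformation $z\mapsto\frac1{qz}$ is undefined at $z=0$, so the statement "$\xi(z)=\xi(\frac1{qz})$ for any $z\in\Omega_q^c$" is to be read with that understanding. No further analytic continuation argument is needed since everything is inherited from the already-established two-variable result.

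I do not anticipate a genuine obstacle here: the corollary is essentially bookkeeping on top of Theorem \ref{FunctEq} and Lemma \ref{OmegaShaped}. The only mild subtlety is making sure the identification $\{z:(z,0)\in\Omega^c\}=\Omega_q^c$ is spelled out correctly, i.e. that the slice of the three-real-dimensional manifold $\Omega\subset\bc^2$ at $u=0$ is exactly the classical curve $\Omega_q$, which is immediate from the formula for $g$ and Lemma \ref{OmegaShaped} with $w=q\le d^2/4=(q+1)^2/4$. Everything else — the containment of the small disc, the $\psi$-invariance, the descent of the functional equation — is formal.
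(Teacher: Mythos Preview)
Your proposal is correct and follows exactly the route the paper intends: the paper's proof is the single sentence ``The following result follows directly by Theorem \ref{FunctEq},'' and you have simply written out the specialization at $u=0$ in detail, including the identification $\{z:(z,0)\in\Omega^c\}=\Omega_q^c$ via Lemma \ref{OmegaShaped} and the observation that $\psi$ restricts to $z\mapsto\frac1{qz}$. Your remark about excluding $z=0$ is apt and matches the implicit convention in the statement.
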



\begin{exmp} \label{ClairExample}
Let us consider the graph $X=\bz$, and the group $\G=\bz$, which acts on $X$ by translations. Using results from \cite{Clair}, we compute the Bartholdi zeta function of $(X,\G)$. We obtain
\begin{align*}
	\frac1{Z_{X,\G}(z,u)} & = \Det_\G(I-Az+(1-u^2)z^2) = \exp \int_{\bt} \Log(1-2\cos\th\, z+(1-u^2)z^2)\,d\th \\
	& = 2z\exp \int_\bt \Log\Big( \frac{z^{-1}+(1-u^2)z}{2} - \cos\th \Big)\, d\th \\
	& = 2z\exp \Big( \arcosh \Big( \frac{z^{-1}+(1-u^2)z}{2} \Big) - \log 2 \Big) \\
	& = z \Big( \frac{z^{-1}+(1-u^2)z}{2} \Big) \Bigg(1+\sqrt{ 1 - \frac{4}{(z^{-1}+(1-u^2)z)^2} }\,\Bigg) \\
	& = \frac{1+(1-u^2)z^2}{2} \Bigg(1+ \sqrt{ 1 - \frac{4z^2}{(1+(1-u^2)z^2)^2} }\,\Bigg)\,,
\end{align*}
which extends to an analytic function on the complement of the set $\O=\{(z,u)\in\bc^2:\frac{1+(1-u^2)z^2}{z}\in[-d,d]\}$.
Therefore the Ihara zeta function, extended via the Bartholdi zeta, is defined on the complement of $\{z\in\bc:|z|=1\}$, where it is given by
\begin{align*}
Z_{X,\G}(z) & = Z_{X,\G}(z,0) = 
\frac2{1+z^2}  \bigg(1+ \sqrt{ \Big( \frac{ 1-z^2 }{ 1+z^2  } \Big)^2 }\, \bigg)^{-1} \\
& = \begin{cases}
\ds \frac2{1+z^2}   \Big( 1+ \frac{ 1-z^2 }{ 1+z^2  } \Big)^{-1} = 1, & |z|<1,\\
\ds \frac2{1+z^2}   \Big( 1- \frac{ 1-z^2 }{ 1+z^2  } \Big)^{-1} = z^{-2}, & |z|>1.
\end{cases}
\end{align*}
and the completion $\xi_{X,\G}$ is given by
$$
\xi_{X,\G}(z)=(z-1)^2Z_{X,\G}(z)=
\begin{cases}
(z-1)^2, & |z|<1,\\
 (\frac1z-1)^2, & |z|>1.
\end{cases}
$$
Defined in this way, $\xi_{X,\G}$ satisfies the functional equation, but its behaviour outside the disc is not given by the analytic extension on $\bc$.	
\end{exmp}



\begin{thebibliography}{99}

\bibitem{Barl} M.~T.~Barlow.  {\it Heat kernels and sets with fractal structure}, in: ``Heat Kernels and Analysis on Manifolds, Graphs, and Metric Spaces'' (Paris, 2002), Contemp.  Math. {\bf 338}, Amer.  Math.  Soc., Providence, RI, 2003, pp. 11--40.
 
\bibitem{Ba1} L. Bartholdi, {\it Counting paths in graphs}, Enseign. Math. {\bf 45} (1999), 83--131.

\bibitem{Bass} H.~Bass.  {\it The Ihara--Selberg zeta function of a tree lattice}, Internat.  J. Math.  {\bf 3} (1992),  717--797.


\bibitem{ChGr} J.~Cheeger, M.~Gromov.  {\it $L\sb 2$-cohomology and group cohomology}, Topology {\bf 25} (1986), 189--215.

\bibitem{Choe} Y.~B.~Choe, J.~H.~Kwak, Y.~S.~Park, I.~Sato. {\it Bartholdi zeta and L-functions of weighted digraphs, their coverings and products}, Adv. Math. {\bf 213} (2007), 865--886.

\bibitem{ClMS1} B.~Clair, S.~Mokhtari-Sharghi.  {\it Zeta functions of discrete groups acting on trees}, J. Algebra {\bf 237} (2001), 591--620.

\bibitem{ClMS2} B.~Clair, S.~Mokhtari-Sharghi.  {\it Convergence of zeta functions of graphs}, Proc.  Amer.  Math.  Soc.  {\bf 130} (2002), no.  7, 1881--1886

\bibitem{Clair} B.~Clair.  {\it Zeta functions of graphs with $\bz$ actions}, J. Combin. Theory Ser. B {\bf 99} (2009),  48--61. 


\bibitem{GILa02} D.~Guido, T.~Isola, M.~L.~Lapidus.  {\it Ihara zeta functions for periodic simple graphs},  C$^*$-algebras and elliptic theory II, 103--121, Trends Math., BirkhŠuser, Basel, 2008.

\bibitem{GILa03} D.~Guido, T.~Isola, M.~L.~Lapidus.  {\it Ihara's zeta function for periodic graphs and its approximation in the amenable case}, J. Funct. Anal. {\bf 255} (2008), 1339--1361. 

\bibitem{GILa01} D.~Guido, T.~Isola, M.~L.~Lapidus.  {\it A trace on fractal graphs and the Ihara zeta function}, Trans. Amer.  Math.  Soc. {\bf 361} (2009), 3041--3070.  

\bibitem{GILa04} D.~Guido, T.~Isola, M.~L.~Lapidus.  {\it Bartholdi zeta functions for periodic simple graphs}, Analysis on graphs and its applications, 109--122, Proc. Sympos. Pure Math., {\bf 77}, Amer. Math. Soc., Providence, RI, 2008.

\bibitem{HaKu} B.~M.~Hambly, T.~Kumagai.  {\it Heat kernel estimates for symmetric random walks on a class of fractal graphs and stability under rough isometries}, in: ``Fractal Geometry and Applications: A jubilee of Benoit Mandelbrot'', Proc.  Sympos.  Pure Math., {\bf 72}, Part 2, Amer.  Math.  Soc., Providence, RI, 2004, pp.  233--259.

\bibitem{HaHo} K.~Hashimoto, A.~Hori.  {\it Selberg--Ihara's zeta function for $p$-adic discrete groups}, in ``Automorphic Forms and Geometry of Arithmetic Varieties'', Adv.  Stud.  Pure Math. {\bf 15}, Academic Press, Boston, MA, 1989, pp.  171--210.

\bibitem{Hashi} K.~Hashimoto.  {\it Zeta functions of finite graphs and representations of $p$-adic groups}, in ``Automorphic Forms and Geometry of Arithmetic Varieties'', Adv.  Stud.  Pure Math.  {\bf 15}, Academic Press, Boston, MA, 1989, pp.  211--280.




\bibitem{Ihara} Y.~Ihara.  {\it On discrete subgroups of the two by two projective linear group over $\mathfrak{p}$-adic fields}, J. Math.  Soc.  Japan {\bf 18} (1966), 219--235.

\bibitem{KoSu} M.~Kotani, T.~Sunada.  {\it Zeta functions of finite graphs}, J. Math.  Sci.  Univ.  Tokyo {\bf 7} (2000), 7--25.

\bibitem{KLS} J.~H.~Kwak, J.~Lee, M.~Y.~Sohn. {\it Bartholdi zeta functions of graph bundles having regular fibers},  European J. Combin.  {\bf 26} (2005),  593--605.


\bibitem{MiSu1} H.~Mizuno, I.~Sato.  {\it Bartholdi zeta functions of graph coverings},  J. Combin.  Theory Ser.  B {\bf 89} (2003), 27--41.

%
%

\bibitem{MiSu5} H.~Mizuno, I.~Sato.  {\it A new Bartholdi zeta function of a digraph},  Linear Algebra Appl.  {\bf 423} (2007), 498--511.

\bibitem{MoWo} B.~Mohar, W.~Woess.  {\it A survey on spectra of infinite graphs}, Bull.  London Math.  Soc.  {\bf 21} (1989), 209-234.


%
%

\bibitem{Mohar} B.~Mohar.  {\it The spectrum of an infinite graph}, Linear Algebra Appl.  {\bf 48} (1982), 245-256.

\bibitem{Sa1} I.~Sato.  {\it Bartholdi zeta functions of fractal graphs}, Electron. J. Combin. {\bf 16} (2009),  Research Paper 30, 21 pp.

\bibitem{Sa2} I.~Sato.  {\it Bartholdi zeta functions of periodic graphs}, Linear Multilinear Algebra {\bf 59} (2011),  11--24.

\bibitem{StTe} H.~M.~Stark, A.~A.~Terras.  {\it Zeta functions of finite graphs and coverings}, Adv.  Math.  {\bf 121} (1996), 126-165.

\bibitem{Sunada} T.~Sunada. {\it L-functions in geometry and  applications}, Springer Lecture Notes in Math. {\bf 1201}, 1986, pp.  266-284.

\end{thebibliography}
\end{document}